\theoremstyle{definition}
\newtheorem{definition}{Definition}[section]
\theoremstyle{plain}
\newtheorem{theorem}{Theorem}
\newtheorem*{theorem*}{Theorem}
\newtheorem{proposition}[definition]{Proposition}
\newtheorem*{proposition*}{Proposition}
\newtheorem{lemma}[definition]{Lemma}
\newtheorem*{lemma*}{Lemma}
\newtheorem{sublemma}[definition]{Sub-lemma}
\newtheorem*{sublemma*}{Sub-lemma}
\newtheorem{corollary}[definition]{Corollary}
\newtheorem{fact}{Fact}
\newtheorem*{fact*}{Fact}
\newtheorem*{claim*}{Claim}
\theoremstyle{remark}
\newtheorem{remark}[definition]{Remark}
\newtheorem{example}{Example}
\newcommand{\R}{\mathbf{R}}
\newcommand{\Z}{\mathbf{Z}}
\newcommand{\gl}{\mathfrak{gl}}
\renewcommand{\sl}{\mathfrak{sl}}
\renewcommand{\a}{\mathfrak{a}}
\newcommand{\g}{\mathfrak{g}}
\newcommand{\h}{\mathfrak{h}}    
\renewcommand{\k}{\mathfrak{k}}    
\newcommand{\m}{\mathfrak{m}}
\newcommand{\so}{\mathfrak{so}}
\newcommand{\p}{\mathfrak{p}}
\renewcommand{\d}{\mathrm{d}}
\DeclareMathOperator{\Diff}{Diff}
\DeclareMathOperator{\Isom}{Isom}
\DeclareMathOperator{\Ad}{Ad}  
\DeclareMathOperator{\ad}{ad}
\DeclareMathOperator{\Ker}{Ker}
\DeclareMathOperator{\Span}{Span}
\DeclareMathOperator{\Jac}{Jac}
\DeclareMathOperator{\GL}{GL}
\DeclareMathOperator{\SL}{SL}
\DeclareMathOperator{\PO}{PO}
\DeclareMathOperator{\SO}{SO}
\DeclareMathOperator{\Rk}{Rk}
\DeclareMathOperator{\Conf}{Conf}
\DeclareMathOperator{\id}{id}
\DeclareMathOperator{\Supp}{Supp}
\renewcommand{\S}{\mathbf{S}}
\newcommand{\Ein}{\mathbf{Ein}}
\renewcommand{\epsilon}{\varepsilon}
\renewcommand{\geq}{\geqslant}
\renewcommand{\leq}{\leqslant}
\renewcommand{\bar}{\overline}
\title[Conformal actions of lattices]{Conformal actions of higher rank lattices on compact pseudo-Riemannian manifolds}
\author{Vincent Pecastaing}
\thanks{Partially supported by FNR grants INTER/ANR/15/11211745 and
OPEN/16/11405402.}
\address{Department of Mathematics, University of Luxembourg,
Maison du nombre, 6 avenue de la Fonte, L-4364 Esch-sur-Alzette, Luxembourg}
\email{vincent.pecastaing@uni.lu}
\date{\today}
\begin{document}

\maketitle

\begin{abstract}
We investigate conformal actions of cocompact lattices in higher-rank simple Lie groups on compact pseudo-Riemannian manifolds. Our main result gives a general bound on the real-rank of the lattice, which was already known for the action of the full Lie group (\cite{zimmer87}). When the real-rank is maximal, we prove that the manifold is conformally flat. This indicates that a global conclusion similar to that of \cite{bader_nevo} and \cite{frances_zeghib} in the case of a Lie group action might be obtained. We also give better estimates for actions of cocompact lattices in exceptional groups. Our work is strongly inspired by the recent breakthrough of Brown, Fisher and Hurtado on Zimmer's conjecture \cite{BFH}.
\end{abstract}

\tableofcontents

\section{Introduction}

Zimmer's conjectures concern actions of lattices in higher-rank semi-simple Lie groups on differentiable manifolds. It is expected that they share common features with standard algebraic actions. The most famous problem is when the dimension of the manifold is low compared to the lattice, and important breakthroughs have recently been made in this direction (\cite{BFH}).

Originally, the conjectures of Zimmer were formulated for actions of lattices which preserve a geometric structure, such as a pseudo-Riemannian metric or a symplectic form. The general idea is that there should be a universal obstruction to the existence of a non-trivial action of a given lattice on a geometric structure of a given type, see \cite{zimmer87'}, Conjecture I for a concrete formulation. In the differentiable case, the obstruction only depends on the dimension of the manifold, but for other geometric structures, more restrictive conclusions are naturally expected.

This program is motivated by earlier results of Zimmer, notably his cocycle super-rigidity theorem, that generalizes Margulis' Super-rigidity Theorem, and deals with \textit{measure preserving} dynamics of semi-simple Lie groups and their lattices. Based on this result, he obtained very strong conclusions, for instance when a semi-simple Lie group $G$, or one of its lattices $\Gamma$, acts on a compact manifold $M$, preserving a \textit{finite-type} $H$-structure and a volume form (see \cite{zimmer86}, Theorem A and F). Section \ref{ss:isometries} gives an illustration of these results, which essentially proved Zimmer's conjectures when the acting group preserves a \textit{unimodular} rigid geometric structure.

When we consider dynamics on compact geometric structures that are not unimodular, there is a priori no natural, finite invariant measure, and the problem is notably different. Nonetheless, remarkable restrictions were observed in various non-measure preserving contexts such as \cite{zimmer87}, \cite{bader_nevo}, \cite{nevo_zimmer09}, \cite{bader_frances_melnick}. All these results deal with actions of connected semi-simple Lie groups, and to our knowledge, only few results about discrete group actions on non-unimodular geometric structures exist (see for instance Theorem 1.4 of \cite{bader_frances_melnick} for ``semi-discrete'' group acting on parabolic geometries, the non-linear version of Borel's density theorem for geometric structures in \cite{iozzi}, or \cite{zeghib_projective} for groups of projective transformations of compact pseudo-Riemannian manifolds).

The contributions of the present article are results about actions of higher-rank lattices in the continuity of these works, in the framework of conformal pseudo-Riemannian geometry, which is a typical example of non-unimodular geometric structure. 

Our main result is Theorem \ref{thm:main} below. We give an upper bound on the real-rank of the lattice, which is the same as the bound given by \cite{zimmer87} when the ambient Lie group acts. This bound is achieved when lattices in $\SO(p,q)$ act on the pseudo-Riemannian analogue of the Möbius sphere. We also prove that when the lattice has maximal real-rank, the metric is conformally flat, \textit{i.e.} locally equivalent to this model space. Global conclusions could follow, and it might be proved that the manifold is very close to the model space, as in the main results of \cite{bader_nevo} and \cite{frances_zeghib} which also deal with Lie groups actions.

Following a natural analogy with \cite{BFH}, we obtain in Theorem 2 stronger estimates for actions of uniform lattices in some exceptional Lie groups.

Our approach is largely inspired by the proof of Brown-Fisher-Hurtado in \cite{BFH}. However, significant simplifications appeared in this context, due to the fact that a \textit{rigid geometric structure} is preserved (see \cite{gromov88}, \cite{kobayashi}, Chapter I). The main point from their article that we use is the construction of invariant measures in some dynamical configurations, based on Ledrappier-Young's formula (see Section \ref{ss:differentiable_dynamics}).

\subsection{Actions by pseudo-Riemannian isometries}
\label{ss:isometries}

A natural situation where the original results of Zimmer apply is when $G$ or $\Gamma$ acts by isometries on a compact pseudo-Riemannian manifold of signature $(p,q)$, \textit{i.e.} by automorphisms of an $O(p,q)$-structure. 

We remind that a (smooth) pseudo-Riemannian metric $g$ on a manifold $M$ is a smooth distribution of non-degenerate quadratic forms of signature $(p,q)$ on the tangent spaces of $M$. An isometry is a diffeomorphism that preserves this field of quadratic forms. Pseudo-Riemannian metrics are always rigid at order $1$, implying that the group of isometries $\Isom(M,g)$ is a Lie transformation group. 

For isometric actions of lattices, Zimmer's result reads:

\begin{theorem*}[Consequence of Theorem F of \cite{zimmer86}]
Let $(M,g)$ be a closed pseudo-Riemannian manifold of signature $(p,q)$ and $\Gamma$ a lattice in a semi-simple Lie group $G$ with finite center and all of whose simple factors have real-rank at least $2$. Assume that $\Gamma$ acts isometrically on $(M,g)$.

Then, either $\g$ embeds into $\so(p,q)$, or the action factorizes through a compact Lie group, \textit{i.e.} the action is of the form $\Gamma \rightarrow K \hookrightarrow \Isom (M,g)$, where $K$ is a compact Lie group.
\end{theorem*}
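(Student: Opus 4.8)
The plan is to realize this statement as an application of Zimmer's cocycle super-rigidity theorem, for which the isometric hypothesis furnishes exactly the two ingredients the method requires: a finite invariant measure and a rigid, finite-type geometric structure. First I would observe that the volume density of $g$ defines a $\Gamma$-invariant measure on the compact manifold $M$, hence a finite invariant measure $\mu$; after decomposing $\mu$ into ergodic components it suffices to treat a single ergodic component. Passing to the bundle of $g$-orthonormal frames, which is a principal $\O(p,q)$-bundle preserved by the $\Gamma$-action, a measurable section yields the derivative cocycle $\alpha \colon \Gamma \times M \to \O(p,q)$. Because the $\O(p,q)$-structure is of finite type, controlling $\alpha$ up to measurable cohomology is tantamount to controlling the action itself, which is what makes Zimmer's framework (Theorem F of \cite{zimmer86}) applicable here.

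Next I would extract a homomorphism. Inducing the action to $G$ on the suspension $(G \times M)/\Gamma$, fibered over the finite-volume space $G/\Gamma$, the group $G$ acts preserving a finite measure and $\alpha$ extends to a $G$-cocycle valued in $\O(p,q)$. Cocycle super-rigidity then shows that, up to measurable cohomology, this cocycle is of the form $\sigma \cdot c$, where $\sigma \colon G \to \O(p,q)$ is a continuous homomorphism and $c$ takes values in a compact subgroup of $\O(p,q)$ centralizing $\sigma(G)$; in particular the algebraic hull of $\alpha$ is, up to compact factors, the image of $\sigma$, and I obtain a Lie algebra morphism $\d\sigma \colon \g \to \so(p,q)$. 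The final conclusion then comes from a dichotomy on the homomorphism $\rho \colon \Gamma \to \Isom(M,g)$ defining the action, $\Isom(M,g)$ being a Lie group since $g$ is rigid at order one. If the closure $\overline{\rho(\Gamma)}$ is compact, the action factorizes through this compact Lie group, which is the second alternative. Otherwise, Margulis' super-rigidity (valid since every simple factor of $G$ has real-rank at least $2$) shows that, up to a compact correction and a finite-index subgroup, $\rho$ extends to a continuous homomorphism $G \to \Isom(M,g)$, so that $\sigma$ is non-trivial. Non-compactness then forces $\d\sigma$ to be injective: its kernel is an ideal of the semi-simple algebra $\g$, and any simple factor lying in the kernel would act through a compact group, contradicting the assumption that we are not in the first case; hence $\g \hookrightarrow \so(p,q)$.

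The step I expect to be most delicate is precisely the passage from the measurable output of super-rigidity to this topological dichotomy about $\overline{\rho(\Gamma)}$ — concretely, showing that a compact algebraic hull of the derivative cocycle, equivalently a measurable $\Gamma$-invariant Riemannian metric, genuinely forces $\rho(\Gamma)$ to be precompact. A merely measurable invariant metric cannot in general be upgraded to a continuous one, so this is exactly the point where the finite-type, rigid nature of the $\O(p,q)$-structure is indispensable: it is what rules out such pathologies and lets Theorem F of \cite{zimmer86} do the decisive work in converting the measurable conclusion into the clean statement that the action is either geometrically realized by an embedding $\g \hookrightarrow \so(p,q)$ or confined to a compact Lie group.
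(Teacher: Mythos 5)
The paper does not prove this statement at all: it is quoted verbatim as a consequence of Theorem~F of \cite{zimmer86}, applied to the finite-type $O(p,q)$-structure defined by $g$ together with the invariant volume density. Your proposal instead tries to reconstruct the argument behind that theorem, and while the overall architecture (finite invariant measure, derivative cocycle on the orthonormal frame bundle, induction to the suspension, cocycle super-rigidity, then the compact/non-compact dichotomy) is indeed the standard one, it contains one step that is genuinely wrong. You invoke Margulis' super-rigidity to extend $\rho : \Gamma \rightarrow \Isom(M,g)$ to a continuous homomorphism $G \rightarrow \Isom(M,g)$ when $\overline{\rho(\Gamma)}$ is non-compact. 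The paper explicitly warns against this in the Remark immediately following the theorem: such an extension does not exist in general, the action of $O(p,q)_{\Z}$ on $\mathbf{T}^{p,q}$ being a counterexample ($\rho(\Gamma)$ is discrete and unbounded, yet no $G$-action on the torus extends it). Margulis' theorem requires the target to be (essentially) a semisimple algebraic group with Zariski-dense unbounded image, and $\Isom(M,g)$ offers no such control.

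The step is also unnecessary, and removing it exposes a circularity in your logic. The homomorphism $\sigma$ must come from cocycle super-rigidity applied to the derivative cocycle, not from an extension of the action: the correct dichotomy is on the algebraic hull of that cocycle. If the hull is non-compact, super-rigidity produces a non-trivial $\sigma$ and hence a non-trivial morphism $\d\sigma : \g \rightarrow \so(p,q)$ directly (with the caveat that for semisimple, non-simple $\g$ this gives a non-trivial homomorphism rather than an embedding; your kernel argument does not repair this, since the simple factors of $G$ do not act on $M$ and so cannot ``act through a compact group''). If the hull is compact, one gets a measurable invariant Riemannian metric, and the implication ``measurable invariant metric $\Rightarrow$ $\overline{\rho(\Gamma)}$ compact'' is exactly the hard, finite-type rigidity content of Theorem~F --- the step you correctly single out as delicate but then defer back to Theorem~F itself. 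As written, you use the unavailable Margulis shortcut for the direction that actually needs that rigidity argument, and cite the theorem you are trying to prove for the other. Either treat the statement as the paper does, as a direct corollary of Theorem~F, or commit to reproving the finite-type step (via the free and proper action of $\Isom(M,g)$ on the prolongation bundle, as the paper does in its proof of Proposition~\ref{prop:no_finite_invariant_measure}).
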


Of course, the obstruction $\g \hookrightarrow \so(p,q)$ is stronger than a constraint formulated with the dimension of $M$. For instance, $\sl(3,\R)$ does not embed into $\so(2,n)$ for all $n \geq 1$. Thus, if $(M,g)$ is a closed pseudo-Riemannian manifold of signature $(2,n)$, then any isometric action of $\SL(3,\Z)$ on $(M,g)$ has finite image, even though $\dim M = n+2$ could be large.

\begin{example}
If $\min(p,q) \geq 2$ and $(p,q) \neq (2,2)$, then $G=O(p,q)$ and $\Gamma = G_{\Z}$ satisfy the hypothesis of this theorem and $\Gamma$ acts on the pseudo-Riemannian torus $\mathbf{T}^{p,q} = \R^{p,q}/\Z^{p+q}$, and its action is unbounded.
\end{example}

\begin{remark}
Even though $\Isom(M,g)$ is a Lie group, Margulis' super-rigidity does not imply that an isometric action $\Gamma \rightarrow \Isom(M,g)$ extends to an action of $G$. And this is wrong in general, as it can be observed in the example of $\mathbf{T}^{p,q}$.
\end{remark}

\begin{remark}
Concerning pseudo-Riemannian isometric actions of simple Lie groups, the conclusion of Zimmer's Embedding Theorem - Theorem A of \cite{zimmer86} - gives a complete obstruction: given a non-compact, simple Lie group $G$, the existence of a locally faithfull isometric action of $G$ on a compact manifold of signature $(p,q)$ is reduced to an algebraic question on representations of $\g$.
\end{remark}

\subsection{Conformal dynamics: motivations, Lie group actions}

Let $(M,g)$ be a pseudo-Riemannian manifold of signature $(p,q)$. 

\subsubsection{Definitions and standard examples}

The conformal class of $g$ is defined as $[g] = \{\varphi g, \ \varphi \in \mathcal{C}^{\infty}(M), \ \varphi > 0\}$, and a diffeomorphism $f$ of $M$ is said to be conformal with respect to $g$ if it preserves $[g]$ setwise. An important property is that when $\dim M \geq 3$, a conformal class $[g]$ defines a rigid geometric structure on $M$. This can be interpreted by the fact that the associated $(\R_{>0} \times O(p,q))$-structure is of finite type in the sense of Cartan (\cite{kobayashi}, Chapter I), see also \cite{gromov88}. As a consequence, the group of conformal diffeomorphisms $\Conf(M,g)$ has a natural Lie group structure.

An important example of compact pseudo-Riemannian manifold is the conformal compactification of the flat pseudo-Euclidean space $\R^{p,q}$, the (pseudo-Riemannian) \textit{Einstein universe} $\Ein^{p,q}$. It is a parabolic space $\PO(p+1,q+1)/P$, where $P$ is a maximal parabolic subgroup, isomorphic to the stabilizer of an isotropic line in $\R^{p+1,q+1}$. Otherwise stated, $\Ein^{p,q}$ is the projectivized nullcone of $\R^{p+1,q+1}$, and it inherits from it a natural conformal class of signature $(p,q)$ such that $\Conf(\Ein^{p,q}) = \PO(p+1,q+1)$. When $p=0$, it is nothing else than the sphere with its standard conformal structure.

\subsubsection{Additional motivation: a generalization of Lichnerowicz conjecture}

The interest in conformal dynamics of semi-simple Lie groups and their lattices is moreover motivated by an older problem originally asked by Lichnerowicz.

It was settled in the case of Riemannian conformal geometry. Ferrand and Obata solved it (\cite{ferrand96}, \cite{obata71}) and in the compact case, their result asserts that given a compact Riemannian manifold $(M,g)$, its conformal group $\Conf(M,g)$ is non-compact if and only if $(M^n,g)$ is conformally equivalent to the round sphere $\S^n$.

For other signatures, the situation is more complicated. The natural conjecture that arose from the theorem of Ferrand and Obata was that pseudo-Riemannian manifolds with an \textit{essential conformal group} shall be classifiable, see \cite{dambra_gromov} Section 7.6. We remind that a subgroup $H < \Conf(M,g)$ is said to be essential if $H \nsubseteqq \Isom(M,g')$ for all metrics $g'$ in the conformal class of $g$. It turned out that there are many essential pseudo-Riemannian manifolds, and that obtaining a classification seems not plausible, see \cite{frances_ferrand_obata_lorentz}, \cite{frances_counter_examples}. 

\subsubsection{Anterior results for actions of connected semi-simple groups}

It is then natural to consider manifolds admitting an essential conformal group with a ``rich'' algebraic structure. The following result gives an interesting positive answer, when ``rich'' is interpreted as ``containing a semi-simple Lie subgroup of maximal real-rank''.

\begin{theorem*}[\cite{zimmer87},\cite{bader_nevo},\cite{frances_zeghib}]
Let $(M^n,g)$ be a closed pseudo-Riemannian manifold of signature $(p,q)$, with $n \geq 3$ and $p \leq q$, and let $G$ be a non-compact simple Lie group. Assume that we are given a locally faithful, conformal action $G \rightarrow \Conf(M,g)$. Then,
\begin{itemize}
\item $\Rk_{\R} G \leq p + 1$ (follows from Theorem 1 of \cite{zimmer87})
\item and if $\Rk_{\R} G = p+1$, then $\g = \so(p+1,k)$ with $p+1 \leq k \leq q+1$ and $(M,g)$ is conformally diffeomorphic to a quotient $\Gamma \setminus \tilde{\Ein^{p,q}}$, where $\Gamma$ is a discrete group acting freely, properly and conformally (\cite{bader_nevo}, Theorem 2 and \cite{frances_zeghib}).
\end{itemize}
\end{theorem*}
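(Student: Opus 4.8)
The three assertions rest on three different inputs, and the plan is to treat them in turn on top of a common setup. Since $n\geq 3$, I would encode the conformal class $[g]$ as a normal parabolic Cartan geometry $(\mathcal{B}\to M,\omega)$ modelled on $\Ein^{p,q}=\PO(p+1,q+1)/P$, with $\omega$ a Cartan connection valued in $\so(p+1,q+1)$; a conformal diffeomorphism lifts canonically to an automorphism of $(\mathcal{B},\omega)$, so the locally faithful action $G\to\Conf(M,g)$ becomes a locally faithful action of $G$ on $\mathcal{B}$ preserving $\omega$ and commuting with $P$. For the rank bound I would invoke the embedding mechanism behind Theorem 1 of \cite{zimmer87}: evaluating $\omega$ at a suitable point $\hat x\in\mathcal{B}$ on the fundamental vector fields of the $\g$-action yields a linear map $\g\to\so(p+1,q+1)$ which, at a generic $\hat x$, is an injective Lie algebra homomorphism carrying a maximal $\R$-split subalgebra of $\g$ into an $\R$-split abelian subalgebra. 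As $p\leq q$ gives $\Rk_{\R}\so(p+1,q+1)=p+1$, this forces $\Rk_{\R}G\leq p+1$ (equivalently one reads the bound off the structure algebra $\co(p,q)=\R\oplus\so(p,q)$, of real rank $p+1$).

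Assume now $\Rk_{\R}G=p+1$. Equality forces the embedding $\g\hookrightarrow\so(p+1,q+1)$ to be split-rank preserving, i.e. a maximal $\R$-split subalgebra of $\g$ maps onto a maximal $\R$-split subalgebra of $\so(p+1,q+1)$. I would then classify the simple subalgebras of real rank $p+1$ by comparing restricted root systems: those of $\g$ must form a full-rank subsystem of the restricted roots of $\so(p+1,q+1)$ (type $B_{p+1}$ if $p<q$, type $D_{p+1}$ if $p=q$), and retaining only the subalgebras compatible with the parabolic grading---so that the induced action is genuinely conformal---singles out the family $\g=\so(p+1,k)$ with $p+1\leq k\leq q+1$. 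Here the upper bound is exactly the condition for $\so(p+1,k)$, with compact complement $\so(q+1-k)$, to embed in $\so(p+1,q+1)$, and the lower bound is forced by the real-rank equality.

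The geometric rigidity, following \cite{bader_nevo} and \cite{frances_zeghib}, is the core. The normalised Cartan curvature is a $P$-equivariant map $\kappa\colon\mathcal{B}\to\mathcal{W}$ into the harmonic curvature module (the conformal Weyl tensor for $n\geq 4$, the Cotton tensor for $n=3$), and $\mathcal{W}$ is a $\co(p,q)$-representation with no nonzero vector fixed by a sufficiently large reductive subgroup. Using the split torus $A\subset G$ of rank $p+1$, I would locate a point whose stabiliser projects onto a large reductive subalgebra of $\g_0=\co(p,q)$; $\Ad$-invariance of $\kappa$ there forces $\kappa=0$ at that point, and then, exploiting recurrence of the $A$-action on the compact $M$ together with the equivariance of $\kappa$, flatness propagates to a dense invariant set, hence to all of $M$ by continuity. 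Once $[g]$ is conformally flat, $M$ carries a $(\PO(p+1,q+1),\Ein^{p,q})$-structure, with developing map $\delta\colon\tilde M\to\Ein^{p,q}$ and holonomy $\rho\colon\pi_1(M)\to\PO(p+1,q+1)$ intertwining the lifted $G$-action. The last and hardest step is completeness: one must show that the richness of the $G$-action together with compactness of $M$ forces $\delta$ to be a diffeomorphism onto the universal cover $\tilde{\Ein^{p,q}}$, so that $(M,g)=\Gamma\setminus\tilde{\Ein^{p,q}}$ with $\Gamma=\rho(\pi_1(M))$ discrete and acting freely and properly. I expect this completeness argument---ruling out that $\delta$ omits part of $\tilde{\Ein^{p,q}}$, via the attractor/repeller dynamics of $\R$-split one-parameter subgroups---to be the main obstacle, as it is where the local, algebraic analysis must be promoted to a global conclusion.
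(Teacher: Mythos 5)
This statement is quoted background: the paper does not prove it, it cites \cite{zimmer87}, \cite{bader_nevo} and \cite{frances_zeghib}, so there is no internal proof to compare against. Judged on its own terms, your proposal is a reasonable roadmap but has genuine gaps at each of the three stages. First, the claim that evaluating the Cartan connection $\omega$ on the fundamental vector fields of the $\g$-action gives, at a generic point, an injective \emph{Lie algebra homomorphism} $\g\to\so(p+1,q+1)$ is false as stated: $\omega_{\hat x}$ is injective on Killing generators, but it intertwines brackets only up to the curvature term, and producing an actual embedding from this data is a theorem (the embedding results of Zimmer and of Bader--Frances--Melnick), not an evaluation. The correct elementary route to the first bullet is the one you relegate to a parenthesis: Zimmer's theorem on automorphisms of $H$-structures applied to $H=\CO(p,q)$, whose real rank is $p+1$; that argument runs through the algebraic hull of the derivative cocycle and Borel density, not through the Cartan bundle.

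Second, in the equality case the classification $\g=\so(p+1,k)$ does not follow from comparing restricted root systems: a rank-preserving embedding into $\so(p+1,q+1)$ does not by itself exclude other simple algebras of real rank $p+1$ (e.g.\ one must genuinely rule out $\sl(p+2,\R)$, whose restricted root system $A_{p+1}$ also has rank $p+1$), and the phrase ``retaining only the subalgebras compatible with the parabolic grading'' is doing all the work without justification. In \cite{bader_nevo} this step is dynamical, resting on an analysis of isotropy algebras at points in the closure of a Cartan-subgroup orbit. Third, your outline of the geometric rigidity correctly identifies the two halves (vanishing of the harmonic curvature, then completeness of the developing map of the resulting $(\PO(p+1,q+1),\Ein^{p,q})$-structure), but the completeness half --- which you yourself flag as the main obstacle --- is precisely the content of \cite{frances_zeghib} and is left entirely unproved. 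As it stands the proposal is an accurate table of contents for the cited literature rather than a proof.
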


We recently obtained results about conformal actions of semi-simple Lie groups whose real-rank is not maximal \cite{pecastaing_smooth}, \cite{pecastaing_rang1}.

\subsection{Main result: conformal actions of uniform lattices}

Our main result gives a similar statement for conformal actions of cocompact lattices of $G$.

\begin{theorem}
\label{thm:main}
Let $(M^n,g)$ be a closed pseudo-Riemannian manifold of signature $(p,q)$, with $n \geq 3$, and $\Gamma < G$ a uniform lattice in a non-compact simple Lie group of real-rank at least $2$ and finite center. Assume that we are given $\alpha : \Gamma \rightarrow \Conf(M,g)$, a conformal action such that $\alpha(\Gamma)$ is unbounded in $\Conf(M,g)$. Then, 
\begin{itemize}
\item $\Rk_{\R} G \leq \min(p,q)+1$,
\item and when $\Rk_{\R} G = \min(p,q)+1$, $(M,g)$ is conformally flat.
\end{itemize}
\end{theorem}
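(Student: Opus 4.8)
The plan is to transport the problem to the canonical Cartan geometry associated with $[g]$ and then to run the Brown--Fisher--Hurtado machinery of \cite{BFH} on a suspension space. Since $[g]$ is a rigid geometric structure, it determines a normalized Cartan geometry $(\hat M \to M, \omega)$ modeled on $\Ein^{p,q} = \PO(p+1,q+1)/P$, with $\omega$ valued in $\so(p+1,q+1)$; the conformal action lifts to a free, proper action of $\Gamma$ on $\hat M$ preserving $\omega$ and commuting with the structural $P$-action. As $\Gamma$ is cocompact, I would form the compact suspension $M^{\alpha} = (G \times M)/\Gamma$, a bundle over $G/\Gamma$ with fiber $M$ carrying a $G$-action, together with the fiberwise conformal structure inherited from $[g]$. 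Differentiating along the fibers through the Cartan connection yields a cocycle over the $G$-action valued in the parabolic $P$, whose essential (Levi) part lies in $\CO(p,q) = \R_{>0}\times\O(p,q)$.

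The decisive input I would import from \cite{BFH} is the construction, via the Ledrappier--Young entropy formula, of measures invariant under a maximal $\R$-split torus $A < G$, with Lie algebra $\a$, that project to $G/\Gamma$ and for which the fiberwise Lyapunov exponents are controlled. Note that there need not be any finite $G$-invariant measure on $M^{\alpha}$ --- this is exactly the non-unimodular difficulty --- so the analysis must be carried out at the level of $A$-invariant measures and their coarse Lyapunov decompositions, rather than by a naive application of cocycle super-rigidity. The conformality is what makes the exponent bookkeeping rigid: the weights of the standard $\co(p,q)$-representation on $\R^{p,q}$ are $\beta_0 \pm \nu_j$ for $1 \leq j \leq \min(p,q)$, together with the conformal weight $\beta_0$ of multiplicity $|p-q|$. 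Consequently, for any such measure the fiberwise Lyapunov functionals span a subspace of $\a^{*}$ of dimension at most $\min(p,q)+1$.

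To turn this into the bound $\Rk_{\R}G \leq \min(p,q)+1$, I would use super-rigidity in its cocycle form to match the fiberwise cocycle, up to measurable cohomology and a compact correction, with a representation $\g \to \co(p,q) \subset \so(p+1,q+1)$ under which $A$ acts by $\R$-diagonalizable endomorphisms; the restricted roots of $\g$ then embed into the weight system above. The hypothesis that $\alpha(\Gamma)$ is unbounded is used precisely here, to exclude the degenerate alternative in which all fiberwise exponents vanish: in that case an averaging argument would produce an $\alpha(\Gamma)$-invariant metric in $[g]$, forcing $\alpha(\Gamma)$ to lie in some $\Isom(M,g')$ and hence to be relatively compact (an analogue of Ferrand--Obata), contrary to assumption. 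Non-triviality of the exponents then forces the split rank of $\g$ to be realized inside the weight system, which is the lattice counterpart of Zimmer's bound of \cite{zimmer87}.

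For the equality case I expect super-rigidity to pin the representation down, up to conjugacy, to the standard inclusion $\so(\min(p,q)+1,k) \hookrightarrow \so(p+1,q+1)$, so that the full split Cartan of $G$ acts with the maximal, perfectly balanced exponent pattern of the model. At a point of $\hat M$ carrying such balanced, $\R$-diagonalizable holonomy I would run a curvature-rigidity argument in the style of \cite{frances_zeghib} and \cite{bader_nevo}: a nonzero Cartan curvature is a tensor of definite, nonzero conformal weight, so it cannot be fixed by a holonomy sequence escaping to infinity along $A$, whence it vanishes there; propagating along the dynamics shows it vanishes on an invariant set large enough to conclude that $(M,g)$ is conformally flat. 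The main obstacle, in my view, is the heart of the rank bound: producing $A$-invariant measures with enough nonzero fiberwise exponents and controlling the cocycle over dynamics preserving no finite $G$-invariant measure. This is where the Ledrappier--Young input and the coarse-exponent combinatorics of \cite{BFH} must be adapted to the conformal, parabolic setting, and where the interplay between the unboundedness hypothesis and the symmetric $\co(p,q)$-exponent structure has to be handled with care.
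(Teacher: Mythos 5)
Your overall architecture (suspension $M^{\alpha}$, BFH invariant-measure machinery, conformal weight relations confining the vertical Lyapunov functionals to a $(\min(p,q)+1)$-dimensional subspace of $\a^*$, curvature vanishing in the equality case) matches the paper's, but two of your key mechanisms do not work as stated. First, you propose to apply cocycle super-rigidity to the fiberwise derivative cocycle over the $G$-action on $M^{\alpha}$ to ``match'' it with a representation $\g\to\co(p,q)$; but cocycle super-rigidity requires a finite invariant measure for the acting group, and the whole point (which you yourself flag) is that no finite $G$-invariant measure on $M^{\alpha}$ is available. The paper uses super-rigidity only for the $\Gamma$-action on $M$, and only \emph{conditionally}: assuming a finite $\Gamma$-invariant measure exists and $\g\not\hookrightarrow\so(p,q)$, the Fisher--Margulis theorem reduces the cocycle on the prolongation (Cartan) bundle to a compact group, producing a finite $\Gamma$-invariant measure on that bundle; since $\Conf(M,g)$ acts freely and properly there, this forces $\alpha(\Gamma)$ to be compact, contradicting unboundedness. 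Your alternative route for the degenerate case --- averaging to get an invariant metric and invoking ``an analogue of Ferrand--Obata'' to conclude relative compactness --- is broken: isometry groups of compact \emph{pseudo}-Riemannian manifolds need not be compact (the lattice action on $\mathbf{T}^{p,q}$ is an unbounded isometric action), and zero Lyapunov exponents of an $A$-invariant measure do not yield an invariant metric in any case. What the zero-exponent direction actually yields, via Ledrappier--Young, is a finite $G$-invariant measure on $M^{\alpha}$, hence a finite $\Gamma$-invariant measure on $M$, which is what the super-rigidity argument above excludes; that is how the span of the $\chi_i$ is forced to be all of $\a^*$ and the rank bound follows.

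For the equality case, the paper does not pin down any representation: the uniform direction $X\in\a$ with vertical spectrum $\{-1\}$ is extracted by pure linear algebra from the pairing relations $\chi_i+\chi_{r+1-i}=\chi$ together with the fact that the $\chi_i$ span the $(p+1)$-dimensional space $\a^*$. Your curvature-vanishing sketch then omits the two technical steps that carry the proof: (a) converting the flow $\phi_X^t$ on the suspension into actual elements $\gamma_k\in\Gamma$ acting on $M$ with uniform contraction on a ball, which requires Pesin local stable manifolds and return maps into a trivializing tube; and (b) upgrading Lyapunov regularity, which Oseledec gives only at $\mu$-a.e.\ (essentially one) point, to \emph{every} point of an open set, which requires Frances' stability of holonomy sequences for $\mathcal{C}^0$-converging conformal maps. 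Without (b), the weight argument kills the Weyl/Cotton tensor only on a measure-zero set, which is not enough; and the final globalization uses that every closure $\overline{\Gamma.x}$ supports such a measure and hence meets a conformally flat open set.
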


We remind that a pseudo-Riemannian metric is said to be conformally flat if near every point, the metric reads $\varphi(x)(-\d x_1^2 - \cdots - \d x_p^2 + \d x_{p+1}^2 + \cdots + \d x_n^2)$ in local coordinates, where $\varphi > 0$.

Even though our conclusion is not as sharp as in the case of an action of a semi-simple Lie group, we suspect that nothing notably different may happen. The remaining problem is to consider the action of such a lattice on compact manifolds endowed with a $(\Conf(\Ein^{p,q}),\Ein^{p,q})$-structure, and we expect that these structures should be complete. We leave this problem for further investigations.

\subsection{Better bounds on the optimal index for exceptional groups}

Let $\Gamma$ be a lattice in a higher rank semi-simple Lie group $G$ with no compact factor. A famous question addressed in the Zimmer program is to determine the smallest integer $n$ such that there exist a compact manifold $M^n$ and an action $\Gamma \rightarrow \Diff(M)$ with infinite image. In the context of conformal actions of $\Gamma$, an analogous question would be to determine the ``optimal signature(s)'' for which there exists a non-trivial conformal action of $\Gamma$ on a compact manifold. A natural quantity that we want to optimize is the metric index $\min(p,q)$, which is the dimension of maximally isotropic subspaces of $g$.

\begin{definition}
We define the optimal index of $\Gamma$ as the smallest integer $k$ such that there exist a compact pseudo-Riemannian manifold $(M,g)$ of metric index $\min(p,q)=k$ and a conformal action $\alpha : \Gamma \rightarrow \Conf(M,g)$ such that $\alpha(\Gamma)$ is unbounded in $\Conf(M,g)$. We note $k_{\Gamma}$ the optimal index of $\Gamma$.
\end{definition}

The first point of Theorem \ref{thm:main} says that $k_{\Gamma} \geq \Rk_{\R}(G) - 1$ when $\Gamma$ is cocompact. Even though this bound is an equality when $\Gamma$ is a lattice in a group of the form $\SO(p,q)$, we expect that it is not the case for other groups.

An analogy can be made with the article of Brown-Fisher-Hurtado. The proof of the upper bound on $\Rk_{\R} G$ in Theorem \ref{thm:main} is based on Proposition \ref{prop:G-invariance} (contained in \cite{BFH}), which is a property of differentiable actions. In the context of Zimmer's conjecture, this property is not strong enough to obtain the bounds proved in \cite{BFH}. In fact, put together with the other techniques involved in their work, Proposition \ref{prop:G-invariance} would ``only'' imply that if a differentiable action of $\Gamma$ on a compact manifold $M$ has infinite image, then $\dim M \geq \Rk_{\R}G$ in the non-unimodular case (see \cite{Brown}, Theorem 11.1'). But this is the conjectured bound only when $G$ is locally isomorphic to $\SL(n,\R)$.

For split, simple Lie groups other than $\SL(n,\R)$, Brown-Fisher-Hurtado obtained the expected bounds by applying another property involving the resonance of the Lyapunov spectrum with the restricted root-system of $\g$, proved in \cite{BRHW}.

We could expect that this more advanced methods would imply stronger bounds in the setting of conformal dynamics. Surprisingly, it is not what happens and we get the same bound as in Theorem \ref{thm:main}, except when the restricted root system of $G$ is exceptional. For these exceptional groups, we obtain the following result.

\begin{theorem}
\label{thm:exceptional}
Let $\Gamma$ be a uniform lattice in a non-compact simple Lie group $G$ of real-rank at least $2$, with finite center, and such that the restricted root-system $\Sigma$ of $G$ is exceptional. We have the following lower bounds for its optimal index, depending on $\Sigma$:
\begin{enumerate}
\item If $\Sigma = E_6$, then $k_{\Gamma} \geq 8$.
\item If $\Sigma = E_7$, then $k_{\Gamma} \geq 14$.
\item If $\Sigma = E_8$, then $k_{\Gamma} \geq 28$.
\item If $\Sigma = F_4$, then $k_{\Gamma} \geq 7$. 
\item If $\Sigma = G_2$, then $k_{\Gamma} \geq 2$.
\end{enumerate}

In the case $\Sigma = F_4$, if $\Gamma$ has an unbounded conformal action on a compact pseudo-Riemannian manifold of signature $(7,q)$, $q \geq 7$, then this manifold is conformally flat.

In the case $\Sigma = E_8$, if $\Gamma$ has an unbounded conformal action on a compact pseudo-Riemannian manifold of signature $(28,q)$, $q \geq 28$, then this manifold is conformally flat.
\end{theorem}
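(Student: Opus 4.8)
The plan is to transpose the Brown--Fisher--Hurtado strategy to conformal dynamics and to isolate the exceptional bounds in a computation on the restricted root system. First I would suspend $\alpha$ to a $G$-action on $M^{\alpha}=(G\times M)/\Gamma$ and study the fibrewise derivative cocycle; since $\alpha$ is conformal this cocycle is valued in $\CO(p,q)=\R_{>0}\times\O(p,q)$. Zimmer's cocycle super-rigidity then yields, up to compact and bounded corrections, a representation $\rho:\g\to\so(p,q)$, non-trivial because $\alpha(\Gamma)$ is unbounded. The fibrewise Lyapunov functionals of the cocycle are the restricted $\a$-weights of $\rho$ shifted by the conformal distortion (volume) cocycle, a common translation $\delta\in\a^{*}$; it is this extra direction, reflecting the non-unimodularity, that produces the unit gap between $\min(p,q)$ and $\Rk_{\R}G-1$ already visible in Theorem \ref{thm:main}.

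The decisive ingredient, exactly as for split groups other than $\SL(n,\R)$ in \cite{BFH}, is the resonance property of \cite{BRHW}: if some fibrewise coarse Lyapunov exponent fails to be positively proportional to a restricted root of $\Sigma$, one averages along the associated coarse horospherical subgroup to manufacture a $G$-invariant measure. Proposition \ref{prop:G-invariance} rules out such a measure for an essential action, so every coarse Lyapunov exponent must be proportional to a restricted root. I would then combine this with the specifically conformal phenomenon that, along an unbounded (hence contracting) sequence, the directions of large exponent degenerate onto an isotropic subspace: the coarse fibrewise distributions carrying the resonant exponents are lightlike, so their total dimension is bounded above by the metric index $\min(p,q)$.

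The proof then reduces to a purely root-theoretic optimisation. One must determine the smallest metric index forced by a non-trivial configuration of exponents that is simultaneously (i) proportional to restricted roots, (ii) compatible with the weight combinatorics of an actual $\g$-module (Weyl-invariance and root-string relations), and (iii) paired into lightlike stable/unstable distributions by the invariant conformal class. I would run this case by case for $\Sigma\in\{E_6,E_7,E_8,F_4,G_2\}$; the numerical bounds of the statement are read off as the minimal index of such a configuration, essentially half of the dimension of the smallest flag variety of $G$, the factor two coming from the stable/unstable pairing. This combinatorial step is where I expect the main difficulty to lie, for it is precisely here that the exceptional root systems diverge from the classical ones and yield bounds strictly larger than $\Rk_{\R}G-1$; controlling the zero-weight space and the admissible translations $\delta$ is the delicate part.

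Finally, for the two extremal cases in which conformal flatness is asserted --- $\Sigma=F_4$ in signature $(7,q)$ and $\Sigma=E_8$ in signature $(28,q)$ --- I would show that realising the minimal index pins $\rho$ down to the unique extremal representation, the one modelled on the conformal embedding into $\Ein^{p,q}$. In that rigid situation the curvature of the normal conformal Cartan connection is forced to vanish along the non-wandering set of a suitable one-parameter subgroup, exactly as in the proof of the second bullet of Theorem \ref{thm:main}, and the unboundedness of the action spreads this vanishing over all of $M$, giving conformal flatness. The reason the conclusion is limited to $F_4$ and $E_8$ is that only for these two is the extremal weight configuration rigid enough to determine $\rho$ completely; for $E_6$, $E_7$ and $G_2$ several representations attain the minimal index and no comparable rigidity is available.
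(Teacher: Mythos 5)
Your overall skeleton (suspension, vertical Lyapunov functionals, the resonance criterion of \cite{BRHW}, isotropy constraints from the invariant conformal class, a case-by-case root-system computation, and a contraction argument for conformal flatness) matches the paper's, but three steps in your proposal are genuinely wrong or missing. First, your use of cocycle super-rigidity is backwards. You want it to produce a non-trivial $\rho:\g\to\so(p,q)$ whose restricted weights, shifted by the distortion cocycle, give the Lyapunov functionals. But in the regime of the theorem the optimality of the index forces $\g$ \emph{not} to embed into $\so(p,q)$ (otherwise $\Gamma$ would act unboundedly on $\Ein^{p-1,q-1}$), and the paper uses super-rigidity precisely in that degenerate direction: every morphism $\g\to\so(p,q)$ is trivial, so the cocycle is cohomologous to a compact-valued one, so a finite $\Gamma$-invariant measure on $M$ would lift to the Cartan bundle and force compactness of the image (Proposition \ref{prop:no_finite_invariant_measure}). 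The Lyapunov functionals are \emph{not} weights of a $\g$-module; they are a priori arbitrary elements of $\a^*$ constrained only by the conformal pairing of Proposition \ref{prop:lyapunov_exponents} (which bounds their \emph{number} by $2p+1$ via the isotropic duality $E_i\perp E_j$ for $\chi_i+\chi_j\neq\chi$, not their total dimension by $\min(p,q)$ as you write) and, in the limit case, by positive collinearity with roots. Your criterion (ii), ``compatible with the weight combinatorics of an actual $\g$-module,'' is therefore not available and is not what selects the configurations.

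Second, reading the bounds off as ``essentially half the dimension of the smallest flag variety'' gives $r(\g)\leq 2p+1$, hence $p\geq 8,13,28,7,2$ for $E_6,E_7,E_8,F_4,G_2$ respectively --- which is $13$, not the claimed $14$, for $E_7$. The improvement to $14$ requires the additional limit-case analysis of Section \ref{ss:e7}: when $p=13$ the $27$ half-lines of $\Sigma\setminus S$ for the minimal parabolic of $E_7$ admit no ordering satisfying the relations $\chi_i+\chi_{r+1-i}=2\chi_{(r+1)/2}$, so that case is void. Your proposal has no mechanism to detect this. Third, your explanation of why conformal flatness holds exactly for $F_4$ and $E_8$ (uniqueness of an extremal representation $\rho$) rests on the flawed premise above; the paper's actual mechanism is that in those two limit cases the forced configuration of Lyapunov functionals admits a vector $X\in\a$ with $\chi_1(X)=\cdots=\chi_r(X)=-1$ (a uniform vertical spectrum), which feeds into the contraction/stability argument of Section \ref{s:conformal_flatness}; $E_6$ never reaches the limit case because $r(\mathfrak{e}_6)=16$ is even, $E_7$'s limit case is vacuous, and $G_2$'s bound is realized by a genuine action on $\Ein^{2,3}$. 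Your final contraction step is in the right spirit, but as written the argument does not close.
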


\begin{remark}
\label{rem:g2}
For $\Sigma = G_2$, the inequality $k_{\Gamma} \geq 2$ is sharp and is achieved when $\g$ is the real split form of $\g_2$ (see Section \ref{ss:g2}).
\end{remark}

The specific values of these lower bounds come from the \textit{minimal resonance codimension} of $\g$, see \cite{BFH}, Definition 2.1 and Example 2.3.

Even though we get a significant improvement compared to the bound $k_{\Gamma} \geq \Rk_{\R}(G)-1$, we suspect that these lower bounds on the optimal index are still not sharp in general. The geometric conclusion that we obtain for $F_4$ and $E_8$ could possibly give a way to prove that such actions do not exist.

\subsection{Organization of the article and ideas of proofs}

As said above, our approach is inspired by that of Brown-Fisher-Hurtado in the differentiable case. Let $\Gamma < G$ be a cocompact lattice in a higher-rank simple Lie group $G$ with finite center, and let $\alpha : \Gamma \rightarrow \Conf(M,g)$ be a conformal action such that $\alpha(\Gamma)$ is unbounded.

If $\alpha : \Gamma \rightarrow \Diff(M)$ is a differentiable action, a classic construction gives an auxiliary space $M^{\alpha}$ on which $G$ acts naturally, and in which the action of $\Gamma$ is encoded. Let $A < G$ be a Cartan subspace and let $\mu$ be an $A$-invariant, $A$-ergodic measure on $M^{\alpha}$. The higher-rank version of Oseledec's Theorem yields a simultaneous Oseledec's splitting of the vertical tangent bundle $F^{\alpha}$ of $M^{\alpha}$ for all elements in $A$, and the Lyapunov exponents are linear functionals $\chi_1,\ldots,\chi_r \in \a^*$.

The point that we use from \cite{BFH} is that in general, if $r$ is small compared to a data extracted from the restricted root-system of $G$, and if $\mu$ is well chosen, then $\mu$ must be $G$-invariant. See Section \ref{ss:differentiable_dynamics} for the exact statements. In the general differentiable case, the only possible control on the maximal number of Lyapunov functionals is given by the dimension.

The starting point of our work is that when a geometric structure is preserved, we have a better control of $r$. In our case of a conformal action in signature $(p,q)$, the number of Lyapunov functionals is bounded by $2\min(p,q)+1$ and they moreover satisfy linear relations (Proposition \ref{prop:lyapunov_exponents}). We explain this in Section \ref{s:linear_relation}, after having detailed how the conformal structure of $M$ can be recovered in the vertical tangent bundle of $M^{\alpha}$ in Section \ref{s:suspension}. 

In Section \ref{s:invariant_measures}, we prove Proposition \ref{prop:no_finite_invariant_measure}, which gives an important simplification compared to the differentiable case. Its content is that if $G$ is not locally isomorphic to a subgroup of $\SO(p,q)$, then $\Gamma$ does not preserve any finite measure on $M$. This proposition is almost stated in anterior works of Zimmer and relies essentially on cocycle super-rigidity and the fact that the conformal structure is rigid, which implies that $\Gamma$ acts freely and properly on a principal bundle over $M$, the Cartan bundle.

The bound on the real-rank follows easily in Section \ref{s:bound_on_rank}. Essentially, if the rank of $G$ was larger than $\min(p,q)+1$, then the number of Lyapunov functionals would be too small compared to $\Rk_{\R}G$ and we would obtain a $G$-invariant measure by the above-mentioned argument of differentiable dynamics. This would contradict the fact that $\Gamma$ does not preserve any finite measure. We give first estimates for the proof of Theorem \ref{thm:exceptional} by applying a more advanced argument given in \cite{BRHW} (Proposition \ref{prop:resonance}). We conclude the proof of Theorem \ref{thm:exceptional} in Section \ref{s:limit_cases} by considering limit cases.

Section \ref{s:conformal_flatness} is devoted to the proof of the geometric part of Theorem \ref{thm:main}. The idea is that when $\Rk_{\R}G$ is maximal, then there still cannot exist a finite $\Gamma$-invariant measure on $M$. This forces the Lyapunov functionals of a well chosen $A$-invariant, $A$-ergodic measure on $M^{\alpha}$ to be in a special configuration, always because of the the results cited in Section \ref{ss:differentiable_dynamics} and because of the linear relations seen in Section \ref{s:linear_relation}. 

In particular, this configuration singles out a direction in $\a$ admitting a uniform vertical Lyapunov spectrum. Using local stable manifolds of the corresponding flow in $M^{\alpha}$, we interpret this fact in terms of the dynamics in $M$ of some diverging sequence $(\gamma_k)$ in $\Gamma$. We then use a property of stability of sequences of conformal maps of Frances (\cite{frances_degenerescence}) to prove that the sequence $(\gamma_k)$ has a uniform contracting behavior on an open set, and we finally derive conformal flatness of this open set by using standard arguments of conformal geometry. We conclude that the whole manifold is conformally flat by observing that any compact $\Gamma$-invariant subset of $M$ will intersect such an open set.

\subsection{Conventions and notations}

Throughout this article, unless otherwise specified, $(M^n,\bar{g})$ will always denote a smooth compact pseudo-Riemannian manifold of signature $(p,q)$, with $n = p+q \geq 3$, $G$ a non-compact, simple Lie group of real-rank at least $2$ and with finite center, and $\Gamma<G$ a uniform lattice. We will consider a conformal action of $\Gamma$ on $M$, noted $\alpha : \Gamma \rightarrow \Conf(M,\bar{g})$. We note $[\bar{g}] = \{\varphi \bar{g}, \ \varphi \in \mathcal{C}^{\infty}(M,\R_{>0})\}$ the conformal class of $\bar{g}$.

We will note $A < G$ a Cartan subspace of $G$, \textit{i.e.} a maximal closed connected abelian subgroup of $G$ such that $\Ad_{\g}(A)$ is $\R$-split. The set of restricted roots of $\ad_{\g}(\a)$ is noted $\Sigma$ and for $\lambda \in \Sigma$, we let $\g_{\lambda}$ denote the corresponding restricted root-space, and $G_{\lambda}$ the closed connected subgroup to which it is tangent.

Given a differentiable action of $G$ on a manifold $N$, we will identify an element $X \in \g$ with the vector field on $N$ defined by $X(x) = \frac{\d}{\d t}_{t=0} e^{tX}.x$ for all $x \in N$. For convenience, we also note $V(x) = \{X(x), \ X \in V\} \subset T_xN$ for any vector subspace $V \subset \g$.

The (linear) frame bundle of a vector bundle $E \rightarrow N$ of rank $n$ is the $\GL(n,\R)$-principal bundle $L(E) = \{u : \R^n \rightarrow E_x, \ x \in N, \ u \text{ linear isomorphism}\}$. A frame field (with a given regularity) is a section $\sigma : N \rightarrow L(E)$. If $F : E \rightarrow E$ is a bundle morphism over $f : N \rightarrow N$, we note $\Jac_x^{\sigma}(F) := \sigma(f(x))^{-1} F \sigma(x) \in \GL(n,\R)$ its Jacobian matrix at $x$ with respect to a frame field $\sigma$.

\section{Vertical conformal structure on the suspension space}
\label{s:suspension}

Let $G$ be a simple Lie group, with $\Rk_{\R} G \geq 2$, $\Gamma < G$ be a lattice and $M^n$ be a compact manifold. A differentiable action $\alpha : \Gamma \rightarrow \Diff(M)$ gives rise to an action of $G$ on a fibered manifold.

\subsection{Suspension space}

\begin{definition}
The suspension space of $\alpha$ is the fibration $\pi : M^{\alpha} \rightarrow G/\Gamma$ given by $M^{\alpha} = (G \times M)/\Gamma$, where $\Gamma$ acts on the product via $\gamma. (g,x) = (g\gamma,\gamma^{-1}.x)$, and where $\pi$ is the natural projection.
\end{definition}

We note $[(g,x)]$ the equivalence class of $(g,x) \in G \times M$. The fibers of $\pi$ are diffeomorphic to $M$, and the total space $M^{\alpha}$ is compact when $\Gamma$ is uniform. The full Lie group $G$ acts locally freely on $M^{\alpha}$ via $g.[(g_0,x)] = [(gg_0,x)]$. The $G$-orbits are transverse to the fibers, and define in this way a natural horizontal distribution, and the action is fiber-preserving. Moreover, the original action of $\Gamma$ is encoded in this continuous action via return maps in the fibers.

We note $F^{\alpha} \subset TM^{\alpha}$ the sub-bundle tangent to the fibers. It is a vector bundle over $M^{\alpha}$, of rank $n=\dim M$ and on which $G$ acts linearly. The tangent distribution to the $G$-orbits - namely $\{\g(x^{\alpha}), \ x^{\alpha} \in M^{\alpha}\}$ - is a natural $G$-invariant distribution in direct sum with the vertical bundle. 

If $A < G$ is a Cartan subspace and $\Sigma \subset \a^*$ are its restricted-roots, then the splitting:
\begin{equation*}
\g(x^{\alpha}) = \g_0(x^{\alpha}) \oplus \bigoplus_{\lambda \in \Sigma} \g_{\lambda}(x^{\alpha})
\end{equation*}
diagonalizes the action of $A$ on the horizontal distribution since any $g \in A$ commutes with vector fields generated by elements of $\g_0$ and if $X \in \g_{\lambda}$ and if $g = e^{X_0}$, then $D_{x^{\alpha}}g. X_{x^{\alpha}} = e^{\lambda(X_0)}X_{gx^{\alpha}}$. In particular, for any $A$-invariant probability measure $\mu$, the Lyapunov spectrum of any $g \in A$ with respect to $\mu$ is completely known in the horizontal direction and does not depend on $\mu$. A central question is to understand its vertical part.

\subsection{The conformal class induced on the vertical distribution of $M^{\alpha}$}

Let $p,q$ be two non-negative integers with $n=p+q \geq 3$. We assume that the action of $\Gamma$ on $M$ is conformal with respect to a pseudo-Riemannian metric $g$ of signature $(p,q)$. This geometric data on $M$ gives rise to a conformal structure on the vertical bundle of $M^{\alpha}$. 

\subsubsection{Classic definitions}

Let us first remind:

\begin{definition}
Let $p : E \rightarrow N$ be a vector bundle of rank $n$ over a differentiable manifold $N$. A pseudo-Riemannian metric of signature $(p,q)$ on $E$ is a smooth assignment of quadratic forms $(g_x)_{x\in N}$ of signature $(p,q)$ on the fibers of $E$.

Two metrics $g_1$ and $g_2$ on $E$ are said to be conformal if there exists a smooth function $\phi : N \rightarrow \R_{>0}$ such that $g_2 = \phi g_1$. A conformal structure of signature $(p,q)$ on $E$ is an equivalence class of conformal pseudo-Riemannian metrics of signature $(p,q)$.
\end{definition}

\begin{remark}
Equivalently, a conformal structure on $E$ is the data of a covering $U_i$ of $N$, together with a smooth metric $g_i$ of signature $(p,q)$ on $p^{-1}(U_i)$ such that for all $i,j$ verifying $U_i \cap U_j \neq \emptyset$, there exists $f_{i,j} : U_i \cap U_j \rightarrow \R_{>0}$ such that $g_j = f_{i,j} g_i$ on $U_i \cap U_j$.
\end{remark}

Let $p : E \rightarrow N$ be a vector bundle, and let $g$ be a pseudo-Riemannian metric on $E$. A bundle morphism $F : E \rightarrow E$ over a map $f : N \rightarrow N$ is said to be conformal with respect to $g$ if $F^* g$ is conformal to $g$. The function $\phi : N \rightarrow \R_{>0}$ such that $F^* g= \phi g$ is called the \textit{conformal distortion} of $F$ with respect to $g$.

If a group $H$ acts by conformal bundle automorphisms of $E$, and if for all $h \in H$ we note $\lambda(h,.)$ the conformal distortion of $h$ with respect to $g$, then $\lambda : H \times N \rightarrow \R_{>0}$ is a cocycle over the action of $H$ on $N$.

\subsubsection{Vertical conformal class on the suspension}

Let $\pi : M^{\alpha} \rightarrow G/\Gamma$ be the suspension associated to the conformal action $\alpha$. As said previously, it is a bundle with fiber $M$. In fact, we have a family of natural parametrizations of the fibers. For any $g \in G$, let $\psi_g : M \rightarrow M^{\alpha}$ be the map $\psi_g(x) = [(g,x)]$. Of course, $\psi_g$ is a proper injective immersion of $M$ into $M^{\alpha}$ whose image is the fiber over $g\Gamma$. Because any two such parametrizations of a given fiber differ by an element of $\alpha(\Gamma) < \Conf(M,\bar{g})$, we can push-forward the conformal class $[\bar{g}]$ of $M$ onto conformal classes on every fiber of $M^{\alpha}$. The following proposition asserts that the result is a $G$-invariant smooth object.

\begin{proposition}
To the conformal class $[\bar{g}]$ on $M$ corresponds a conformal class $[\bar{g}^{\alpha}]$ on the vertical distribution $F^{\alpha} \subset TM^{\alpha}$ such that all the maps $\varphi_g$ are conformal diffeomorphisms between $M$ and the fibers of $M^{\alpha}$. The vertical differential action of $G$ preserves this conformal class.
\end{proposition}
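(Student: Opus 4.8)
The plan is to build the vertical conformal class fiber by fiber by pushing forward $[\bar{g}]$ through the parametrizations $\psi_g$, and then to check that the result is well-defined, smooth, and $G$-invariant. The whole argument rests on a single compatibility relation between the parametrizations of a fixed fiber.

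First I would record that for $g \in G$ and $\gamma \in \Gamma$ one has $[(g\gamma,y)] = [(g,\gamma.y)]$, which yields the relation $\psi_{g\gamma} = \psi_g \circ \alpha(\gamma)$. Since $\psi_g$ is a proper injective immersion onto the fiber $\pi^{-1}(g\Gamma)$, its differential $D_x\psi_g : T_xM \to F^{\alpha}_{\psi_g(x)}$ is a linear isomorphism onto the vertical space. On the fiber over $g\Gamma$ I would then set the conformal class to be $(\psi_g)_*[\bar{g}]$. To see this is independent of the representative $g$ of the coset, note that any other admissible parametrization is $\psi_{g\gamma}$, and by the compatibility relation $(\psi_{g\gamma})_*[\bar{g}] = (\psi_g)_*\bigl(\alpha(\gamma)_*[\bar{g}]\bigr)$. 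As $\alpha(\gamma) \in \Conf(M,\bar{g})$ satisfies $\alpha(\gamma)_*[\bar{g}] = [\bar{g}]$, the two classes coincide, so $[\bar{g}^{\alpha}]$ is defined unambiguously on every fiber, and each $\psi_g$ is tautologically a conformal diffeomorphism onto its fiber.

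For smoothness, I would use that the discrete quotient $G \to G/\Gamma$ admits smooth local sections $s : U \to G$. Over such a $U$, the assignment $(s\Gamma,x) \mapsto \psi_{s(\cdot)}(x)$ simultaneously trivializes the fibration and the vertical bundle $F^{\alpha}$, and pushing a fixed smooth representative $\bar{g}$ of the class through these parametrizations gives a smooth local representative of $[\bar{g}^{\alpha}]$. On overlaps these representatives differ by the positive smooth conformal factors coming from the section change composed with elements of $\alpha(\Gamma)$, so $[\bar{g}^{\alpha}]$ is a globally defined smooth conformal structure on $F^{\alpha}$. For the $G$-invariance I would use the identity $h.\psi_g(x) = [(hg,x)] = \psi_{hg}(x)$, that is $h \circ \psi_g = \psi_{hg}$, hence $Dh \circ D\psi_g = D\psi_{hg}$ on vertical differentials. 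Writing a vertical vector as $v = D_x\psi_g(w)$, the definition of $[\bar{g}^{\alpha}]$ on the source fiber reads off $\bar{g}(w,w)$ from $v$, while $Dh(v) = D_x\psi_{hg}(w)$ reads off the same value from the class on the target fiber over $hg\Gamma$; thus $Dh$ sends a representative of $[\bar{g}^{\alpha}]$ to a representative, and the vertical $G$-action is conformal.

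I do not expect a genuine obstacle here: all the content is concentrated in the relation $\psi_{g\gamma} = \psi_g \circ \alpha(\gamma)$ together with the conformality of $\alpha(\Gamma)$, which makes both well-definedness and $G$-invariance immediate. The only point that requires some care — and the closest thing to a difficulty — is the verification of smoothness of $[\bar{g}^{\alpha}]$, which amounts to the routine local trivialization argument via sections of $G \to G/\Gamma$ and the smooth dependence of the transition conformal factors.
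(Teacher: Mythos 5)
Your proposal is correct and follows essentially the same route as the paper: define the fiberwise class via the parametrizations $\psi_g$, use the relation $\psi_{g\gamma}=\psi_g\circ\alpha(\gamma)$ together with the conformality of $\alpha(\Gamma)$ for well-definedness, verify smoothness through local sections of $G\to G/\Gamma$ (the paper makes the transition factors explicit as the distortion cocycle $\lambda(\gamma_{ij},\cdot)$ on connected overlaps), and conclude $G$-invariance from $g_0.\psi_g=\psi_{g_0g}$. No gaps.
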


\begin{proof}
Let $p : G \rightarrow G/\Gamma$ be the natural projection. Let $\{D_i\}$ be a collection of trivializing open sets of $G$, such that $\{p(D_i)\}$ is a covering of $G/\Gamma$ and for all $i,j$, $p(D_i) \cap p(D_j)$ is a connected subset, possibly empty. Let $\sigma_i : p(D_i) \rightarrow D_i$ be the associated section. Then, for all $i,j$ such that $p(D_i)$ and $p(D_j)$ intersect, the map $\sigma_i^{-1} \sigma_j$ defined on $p(D_i) \cap p(D_j)$ takes values in $\Gamma$. By continuity, it is constant equal to some $\gamma_{ij}$.

We fix $\bar{g}$ a metric in the conformal class of $M$ and we note $\lambda : \Gamma \times M \rightarrow \R_{>0}$ the conformal distortion of $\Gamma$ with respect to $\bar{g}$. Let $U_i = \pi^{-1}(p(D_i))$, $\psi_i : D_i \times M \rightarrow U_i$ the trivialization $(g,x) \mapsto [(g,x)]$. Then, we define a metric $\bar{g}_i$ on the vertical tangent bundle of $U_i$ by sending the obvious one on the vertical tangent bundle of $D_i \times M$ via $\psi_i$. When $U_i \cap U_j \neq \emptyset$, we define $f_{ij} : U_i \cap U_j \rightarrow \R_{>0}$ by $f_{ij}(\psi_j(g,x)) = \lambda(\gamma_{ij},x)$, for all $g \in D_j \cap D_i \gamma_{ij}$ and $x \in M$. Then, $\bar{g}_i = f_{ij} \bar{g}_j$ over $U_i \cap U_j$.

This conformal structure induces on each fiber of $M^{\alpha}$ the natural conformal class given by the parametrizations $\psi_g$. The $G$-invariance is immediate since $g_0.\psi_g=\psi_{g_0g}$ for all $g,g_0 \in G$.
\end{proof}

This geometric data on the vertical bundle $F^{\alpha}$ gives some restrictions on the vertical Lyapunov spectrum of any $A$-invariant measure, where $A<G$ is any Cartan subspace. 

\section{Linear relations between Lyapunov functionals}
\label{s:linear_relation}
We assume in this section that $E \rightarrow N$ is a vector bundle of rank $n=p+q$, over a compact manifold $N$, and that $E$ is endowed with a conformal class $[g]$ of pseudo-Riemannian metrics of signature $(p,q)$. Assume that we are given a conformal action of $A=\R^k$ on $E$. Given an $A$-invariant, $A$-ergodic measure on $N$, we establish general linear relations among the associated Lyapunov functionals given by Oseledec's Theorem, that we recall below.

\subsection{Higher rank Oseledec's Theorem}

As a consequence of the higher rank version of Oseledec's Theorem (\cite{BRH}, Theorem 2.4), we obtain here:

\begin{theorem*}
Assume that a connected abelian group $A \simeq \R^k$ acts differentiably on $N$ and that its action lifts to an action by bundle automorphisms of $E$. Let $\mu$ be an $A$-invariant, $A$-ergodic probability measure on $N$. Then, there exist:
\begin{enumerate}
\item a measurable set $\Lambda \subset N$ of $\mu$-measure $1$,
\item a finite set of linear forms $\chi_1,\ldots,\chi_r \in \a^*$,
\item and a measurable, $A$-invariant splitting $E= E_1 \oplus \cdots \oplus E_r$ defined over $\Lambda$,
\end{enumerate}
such that for any Riemannian norm $\|.\|$ on $E$ and for every $x \in \Lambda$ and every $v \in E_i(x) \setminus \{0\}$,
\begin{equation*}
\frac{1}{|X|}( \log \|e^X.v\| - \chi_i(X)) \xrightarrow[\substack{|X| \to \infty \\ X \in \a}]{} 0,
\end{equation*}
and
\begin{equation*}
\frac{1}{|X|}  (\log |\det \Jac_x(e^X)| - \sum_{1 \leq i \leq r} \chi_i(X)\dim E_i(x)  ) \xrightarrow[\substack{|X| \to \infty \\ X \in \a}]{} 0,
\end{equation*}
where $\Jac_x(e^X)$ denotes the matrix of $e^X : E(x) \rightarrow E(e^X.x)$ with respect to some bounded measurable frame field of $E$.
\end{theorem*}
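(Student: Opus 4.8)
The plan is to realize the statement as a direct translation of the abstract multiparameter multiplicative ergodic theorem cited as Theorem 2.4 of \cite{BRH} into the geometric language of the bundle $E$. First I would fix a bounded measurable frame field $\sigma : N \to L(E)$, which exists because $N$ is compact; this identifies each fiber $E(x)$ with $\R^n$ and turns the action of $A$ by bundle automorphisms into a linear cocycle $\mathcal{A} : \a \times N \to \GL(n,\R)$, $\mathcal{A}(X,x) = \Jac_x^{\sigma}(e^X)$. Using that $A$ is abelian, so that $e^{X+Y} = e^X e^Y$ and $e^{X+Y}.x = e^Y.(e^X.x)$, the chain rule yields the cocycle identity $\mathcal{A}(X+Y,x) = \mathcal{A}(Y,e^X.x)\,\mathcal{A}(X,x)$, so that $\mathcal{A}$ is a genuine $\a$-parametrized linear cocycle over the measure-preserving action of $A$ on $(N,\mu)$.

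Second, I would check the integrability hypotheses required by the cited theorem. Because $N$ is compact and the action of $A$ is differentiable, for each fixed $X$ the function $x \mapsto \log^+\|\mathcal{A}(X,x)^{\pm 1}\|$ is bounded, hence $\mu$-integrable, which is what allows Theorem 2.4 of \cite{BRH} to apply. That theorem then provides, for $\mu$-almost every $x$, a measurable $A$-invariant splitting $E(x) = E_1(x) \oplus \cdots \oplus E_r(x)$ together with exponents that are \emph{linear} functionals $\chi_1,\dots,\chi_r \in \a^*$, and the convergence $\frac{1}{|X|}(\log\|e^X.v\| - \chi_i(X)) \to 0$ for $v \in E_i(x)\setminus\{0\}$. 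The linearity of each $\chi_i$ is the heart of the higher-rank statement: restricted to a fixed Oseledec subspace, the growth rate $X \mapsto \lim_{|X|\to\infty}\tfrac{1}{|X|}\log\|e^X.v\|$ is homogeneous and, by commutativity of $A$ combined with ergodicity, additive, forcing it to be a linear form on $\a$.

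Third, I would derive the determinant asymptotics. Relative to the splitting, $\det \Jac_x(e^X) = \prod_i \det\big(e^X|_{E_i(x)}\big)$, and applying the first convergence to a basis of each $E_i(x)$ (equivalently, to $\bigwedge^{\dim E_i} E_i$) gives $\frac{1}{|X|}\big(\log|\det(e^X|_{E_i})| - \chi_i(X)\dim E_i(x)\big) \to 0$; summing over $i$ produces the stated formula. Finally, the clause ``for any Riemannian norm'' requires only the observation that on the compact manifold $N$ any two Riemannian norms on $E$ are uniformly equivalent, so their ratio is bounded and disappears after dividing by $|X|$ and letting $|X|\to\infty$; the same remark shows that the splitting and the $\chi_i$ do not depend on the auxiliary frame $\sigma$.

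The step I expect to be the main obstacle is the linearity of the exponents $\chi_i$ together with the simultaneity of the splitting for the whole group $A$, rather than merely for a single flow direction. A one-parameter application of Oseledec's theorem only yields real Lyapunov numbers for each $X$ separately; upgrading these to a single linear functional valid for all $X$ and to a common invariant splitting is precisely what makes the higher-rank theorem nontrivial. Granting Theorem 2.4 of \cite{BRH}, this difficulty is absorbed into the citation, and the remaining work is the bookkeeping described above.
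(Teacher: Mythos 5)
Your proposal matches the paper's treatment: the statement is obtained by translating the $A$-action into a linear cocycle over $(N,\mu)$ via a bounded measurable frame field, noting that compactness of $N$ and smoothness of the action give the integrability hypothesis for free, and then invoking Theorem 2.4 of \cite{BRH}; the paper does exactly this and adds only the remark that the conclusion on angles is discarded. Your extra bookkeeping (deriving the determinant asymptotics and the independence of the Riemannian norm) is correct, though the determinant statement is in any case part of the Lyapunov regularity conclusion of the cited theorem rather than something that needs to be re-proved.
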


\begin{remark}
By compactness of $N$, the classic integrability condition of the cocycle of the action is immediate since we assume the action of $A$ smooth. We also skip the conclusion on the angles which we will not use.
\end{remark}

\subsection{Asymptotic conformal distortion and orthogonality relations}
\label{ss:conformal_distortion}

Let $\Lambda \subset N$ be the set of full measure where the conclusions of Oseledec's Theorem are valid. Let $E(x) = E_1(x) \oplus \cdots \oplus E_r(x)$ be the corresponding $A$-invariant decomposition given for all $x \in \Lambda$, and let $\chi_1,\ldots,\chi_r \in \a^*$ be the Lyapunov functionals.

We fix a metric $g$ on $E$ in the conformal class. We note $\lambda : A \times N \rightarrow \R_{>0}$ the conformal distortion of $A$ with respect to $g$. It is a cocycle over the action of $A$ on $N$ with values in $\R_{>0}$. Applying Oseledec's Theorem to this cocycle, we obtain another linear form $\chi : \a \rightarrow \R$ such that $\mu$-almost everywhere, $\frac{1}{|X|} (\log |\lambda(e^X,x)| - \chi(X)) \rightarrow 0$ as $|X| \to \infty$ in $\a$. Reducing $\Lambda$ if necessary, we assume that this holds for all $x \in \Lambda$.

\begin{remark}
By compactness of $N$, any other metric in the conformal class $[g]$ is of the form $\varphi g$, where $\varphi : N \rightarrow \R_{>0}$ takes values in a bounded interval. Thus, the linear form $\chi$ does not depend on the choice of $g$ in the conformal class. 
\end{remark}

\begin{remark}
In fact, $n\chi/2$ coincides with $\sum_{1 \leq i \leq r} \dim E_i \chi_i$. This can be seen by considering a linear cocycle of the $A$-action, lying in $\R_{>0} \times O(p,q)$. The Jacobian determinant will then be the conformal distortion to the power $n/2=\dim E/2$.
\end{remark}

There are more linear relations between the $\chi_i$'s which are coming from orthogonality relations between the Oseledec's spaces. They will be obtained by using the following observation.

\begin{lemma}
\label{lem:relations_lyapunov}
For any $i,j$ and $x \in \Lambda$, if $\chi_i+\chi_j \neq \chi$, then $E_i(x) \perp E_j(x)$.
\end{lemma}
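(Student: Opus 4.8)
The plan is to exploit the asymptotic behaviour of the metric $g$ evaluated on pairs of Oseledec vectors, comparing the growth of $g(e^X.v, e^X.w)$ against the growth predicted by the conformal distortion cocycle. The key idea is that the conformal distortion $\lambda$ governs how the metric scales globally, so for any vectors $v,w$ the quantity $g(e^X.v, e^X.w)$ behaves asymptotically like $\lambda(e^X,x) \cdot g(v,w)$ up to bounded factors, since $(e^X)^* g = \lambda(e^X,\cdot) g$.

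Concretely, first I would fix $x \in \Lambda$, indices $i \neq j$ (or $i=j$), and pick $v \in E_i(x)\setminus\{0\}$, $w \in E_j(x)\setminus\{0\}$. Using that $A$ acts conformally, I would write
\begin{equation*}
g_{e^X.x}(e^X.v, e^X.w) = \lambda(e^X,x)\, g_x(v,w).
\end{equation*}
Now suppose for contradiction that $g_x(v,w) \neq 0$ for some such pair with $\chi_i + \chi_j \neq \chi$. I would take $\log|\cdot|$ of the absolute value of both sides and divide by $|X|$. On the right, the asymptotic is $\frac{1}{|X|}\log|\lambda(e^X,x)| \to \chi(X)$ by definition of $\chi$, plus a term tending to $0$ since $\log|g_x(v,w)|$ is a fixed constant. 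On the left, I would bound $|g_{e^X.x}(e^X.v,e^X.w)|$ between constants times $\|e^X.v\|\,\|e^X.w\|$ by Cauchy–Schwarz for the auxiliary Riemannian norm; this gives that the left side grows asymptotically at rate at most $\chi_i(X) + \chi_j(X)$ (and, crucially, the metric pairing cannot decay faster than the norms dictate when $g_x(v,w)\neq 0$, so the rate is controlled). Matching the two rates forces $\chi_i(X) + \chi_j(X) = \chi(X)$ for all $X$ in a suitable half-space, hence as linear functionals, contradicting the hypothesis.

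The main obstacle is making the lower bound on the left-hand side rigorous: Cauchy–Schwarz only gives an upper bound $|g(e^X.v,e^X.w)| \leq C \|e^X.v\|\|e^X.w\|$, which controls the growth rate from above by $\chi_i + \chi_j$, but to run the contradiction I need that the growth rate of $\log|g_{e^X.x}(e^X.v,e^X.w)|$ equals $\chi(X)$ exactly (that comes for free from the right-hand side once $g_x(v,w)\neq 0$). So the argument is really cleaner than a two-sided estimate: the equality $g_{e^X.x}(e^X.v,e^X.w) = \lambda(e^X,x)g_x(v,w)$ already pins the left-hand growth rate to $\chi$ whenever $g_x(v,w)\neq 0$, while Cauchy–Schwarz pins it to be $\leq \chi_i + \chi_j$. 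Thus $\chi(X) \leq \chi_i(X) + \chi_j(X)$ for all $X$ with $|X|\to\infty$ in every direction, i.e.\ as linear forms $\chi = \chi_i + \chi_j$ once one also applies the bound in the direction $-X$ (or uses that a linear functional dominated everywhere by another linear functional must be equal). The delicate point to handle carefully is the passage from "dominated in the asymptotic directional limit" to "equal as linear functionals," which uses the linearity of all the $\chi$'s and the fact that the limits hold along every ray $X \in \a$ with $|X|\to\infty$; I would phrase it by noting that if $\chi(X) > \chi_i(X)+\chi_j(X)$ for some $X$, rescaling $X$ and taking the Oseledec limit along that ray yields an immediate contradiction with the Cauchy–Schwarz upper bound, which is exactly the desired conclusion: $g_x(v,w)\neq 0$ is impossible unless $\chi_i + \chi_j = \chi$, which is the contrapositive of the statement.
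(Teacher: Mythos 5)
Your proposal is correct and is essentially the paper's own argument: both compare the exact identity $g_{e^X.x}(e^Xv,e^Xw)=\lambda(e^X,x)g_x(v,w)$, whose left side has Lyapunov rate $\chi$ when $g_x(v,w)\neq 0$, against the compactness bound $|g(u,v)|\leq C\|u\|\|v\|$ giving rate at most $\chi_i+\chi_j$. The only cosmetic difference is that the paper fixes in advance a direction $X$ with $\chi_i(X)+\chi_j(X)<\chi(X)$ and reads off the contradiction there, whereas you derive $\chi\leq\chi_i+\chi_j$ in every direction and then invoke linearity (evaluating at $-X$) to force equality.
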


\begin{proof}
Let us choose an element $X \in \a$ such that $\chi_i(X) + \chi_j(X) < \chi(X)$. If $\|.\|$ denotes an arbitrary Riemannian metric on $E$, then by compactness of $N$, there is $C > 0$ such that for all $x \in N$ and $u,v \in E$, we have $|g_x(u,v)| \leq C \|u\|\|v\|$. Thus, if $x \in \Lambda$ and $u,v$ are in $E_i(x)$ and $E_j(x)$ respectively, then from
\begin{equation*}
\lambda(e^X,x) |g_x(u,v)| = |g_{e^X.x}(e^X u,e^X v)| \leq C \|e^X u\| \|e^X v\|,
\end{equation*}
we get
\begin{align*}
\chi(X) \leq \chi_i(X) + \chi_j(X),
\end{align*}
unless $g_x(u,v)=0$. By the choice of $X$, we obtain $E_i(x) \perp E_j(x)$ for all $x \in \Lambda$.
\end{proof}

\subsection{General linear relations}

We still consider a vector bundle $E \rightarrow N$ over a compact manifold $N$ endowed with a conformal structure of signature $(p,q)$ with $p \leq q$, preserved by an action of an abelian Lie group $A = \R^k$. Let $\mu$ be a finite $A$-invariant, $A$-ergodic measure on $N$, and let $\Lambda \subset N$ such that $\mu(\Lambda)=1$ and $E|_{\Lambda} = \bigoplus_{1 \leq i \leq r} E_i|_{\Lambda}$ be the associated decomposition given by Oseledec's Theorem. Since $A$ acts ergodically on $(N,\mu)$ and conformally on $E$, we can assume that for all $i$, the signature of $E_i$ is constant over $\Lambda$, as well as the orthogonality relations among the $E_i$'s.

\begin{proposition}
\label{prop:lyapunov_exponents}
Let $\chi_1,\ldots,\chi_r$ be the Lyapunov functionals of $\mu$, and let $\chi \in \a^*$ be the Lyapunov functional of the distortion cocyle. Then, $r \leq 2p+1$. Moreover, we can reorder the $\chi_i$'s such that $\mu$-almost everywhere:

\begin{enumerate}
\item If $i+j \neq r+1$, then $E_i \perp E_j$.
\item If $i \leq r/2$, the subspace $E_i \oplus E_{r+1-i}$ is non-degenerate, and $E_i$ and $E_{r+1-i}$ are maximally isotropic in it. Thus, they have the same dimension.
\item If $r$ is even, then $p=q$ and all $E_i$'s are totally isotropic.
\item If $r$ is odd, then $E_{(r+1)/2}$ is non-degenerate.
\end{enumerate}

Consequently, when $r=2s$ is even, the Lyapunov functionals satisfy the relations:
\begin{equation*}
\chi_1+\chi_r = \cdots = \chi_s + \chi_{s+1} = \chi.
\end{equation*}
And when $r=2s+1$ is odd, they satisfy the relations:
\begin{equation*}
\chi_1+\chi_r = \cdots = \chi_s+\chi_{s+2} = 2\chi_{s+1} = \chi.
\end{equation*}
\end{proposition}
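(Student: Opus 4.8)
The plan is to package the orthogonality criterion of Lemma \ref{lem:relations_lyapunov} into a pairing of the Oseledec spaces and then run a purely linear-algebraic argument fiberwise, over a point $x \in \Lambda$ where the signatures of the $E_i$ and the orthogonality relations among them are constant (this constancy is guaranteed by $A$-ergodicity, as noted just before the statement). The central object is the partial involution $\sigma$ on the index set $\{1,\dots,r\}$ defined by $\chi_{\sigma(i)} = \chi - \chi_i$; it is well defined because the functionals $\chi_1,\dots,\chi_r$ are pairwise distinct, so for each $i$ there is at most one $j$ with $\chi_i + \chi_j = \chi$.

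First I would show that $\sigma$ is defined on all of $\{1,\dots,r\}$. If $i$ had no partner, then $\chi_i + \chi_j \neq \chi$ for every $j$ (including $j=i$), so by Lemma \ref{lem:relations_lyapunov} we would have $E_i(x) \perp E_j(x)$ for all $j$; since $E(x) = \bigoplus_j E_j(x)$, this would place the nonzero space $E_i(x)$ in the radical of $g_x$, contradicting non-degeneracy of the metric. The same argument, applied to a hypothetical vector in the kernel of the pairing $g_x : E_i(x) \times E_{\sigma(i)}(x) \to \R$, shows that this pairing is non-degenerate. I then distinguish the two types of orbits of $\sigma$: a fixed point $i$ forces $\chi_i = \chi/2$, whence (by distinctness) there is at most one fixed point, and there $g_x$ restricts non-degenerately to $E_i(x)$; a genuine $2$-cycle $\{i,j\}$ forces $2\chi_i \neq \chi \neq 2\chi_j$, so $E_i(x)$ and $E_j(x)$ are each totally isotropic and are paired non-degenerately, hence $E_i(x) \oplus E_j(x)$ is non-degenerate with both summands maximally isotropic of equal dimension. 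Counting elements as (fixed points) $+\,2\cdot$(number of $2$-cycles) shows there is exactly one fixed point when $r$ is odd and none when $r$ is even; when $r$ is odd this fixed point is the non-degenerate space of (4), and when $r$ is even every $E_i(x)$ lies in a $2$-cycle and is therefore totally isotropic, which is part of (3). Reordering so that the left halves of the $2$-cycles occupy positions $1,\dots,\lfloor r/2\rfloor$, the fixed point (when present) occupies $(r+1)/2$, and the right halves occupy the mirror positions $r+1-i$ yields $\sigma(i)=r+1-i$, and with it points (1) and (2).

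For the bound $r \le 2p+1$ I would collect one representative $E_i(x)$ from each $2$-cycle: these chosen isotropic spaces are pairwise orthogonal (two left halves $E_i, E_{i'}$ would fail to be orthogonal only if $\chi_{i'} = \chi - \chi_i = \chi_{\sigma(i)}$, i.e.\ if they were partners, which they are not), so their direct sum is a single totally isotropic subspace, of dimension $\sum \dim E_i \ge (\text{number of } 2\text{-cycles})$. Since a totally isotropic subspace has dimension at most the metric index $\min(p,q)=p$, the number of $2$-cycles is at most $p$; as this number is $r/2$ when $r$ is even and $(r-1)/2$ when $r$ is odd, we get $r \le 2p$ and $r \le 2p+1$ respectively. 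When $r$ is even the two isotropic sums (left halves and right halves) are complementary of equal dimension $n/2$, so $n/2 \le p$, which together with $p \le q$ and $n=p+q$ forces $p=q$, giving the remaining assertion of (3).

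Finally, the Lyapunov relations are read directly off $\sigma$: from $\chi_{r+1-i} = \chi_{\sigma(i)} = \chi - \chi_i$ we get $\chi_i + \chi_{r+1-i} = \chi$ for every $i$, which is exactly the chain $\chi_1 + \chi_r = \cdots = \chi$ in the even case and the chain $\chi_1+\chi_r = \cdots = 2\chi_{(r+1)/2} = \chi$ in the odd case (using $\chi_{(r+1)/2} = \chi/2$ at the fixed point). I expect the only delicate point to be extracting non-degeneracy of the partner pairing from the one-directional statement of Lemma \ref{lem:relations_lyapunov}, together with the bookkeeping that turns mutual orthogonality of the isotropic representatives into the sharp bound $r \le 2p+1$; everything else is formal once the involution $\sigma$ is in place.
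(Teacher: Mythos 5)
Your proof is correct and rests on the same key ingredient as the paper's, namely Lemma \ref{lem:relations_lyapunov} together with non-degeneracy of the metric, which forces the pairing $E_i \leftrightarrow E_j$ whenever $\chi_i+\chi_j=\chi$; your packaging of this as a total involution $\sigma$ with at most one fixed point is just a cleaner bookkeeping of the paper's argument, which instead fixes a generic $X\in\a$ ordering the $\chi_i$ and splits into the cases ``some $E_i$ is non-isotropic'' (one fixed point) versus ``all $E_i$ isotropic'' (no fixed point) to pin down the bijection $j\mapsto r+1-j$. All the delicate points you flag (non-degeneracy of the partner pairing, equality of dimensions, and the count of $2$-cycles against the isotropy index $p$ giving $r\leq 2p+1$) are handled correctly.
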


\begin{remark}
It has to be noted that these linear forms generate a linear subspace of $\a^*$ of dimension at most $p+1$.
\end{remark}

\begin{proof}
We permute the indices such that there is $X \in \a$ such that $\chi_1(X) < \cdots < \chi_r(X)$.

\begin{flushleft}
\textbf{Case 1}: There exists $i$ such that $E_i$ is not totally isotropic.
\end{flushleft}

\begin{lemma}
\label{lem:Ei_nondegenerate}
The space $E_i$ is non-degenerate and orthogonal to $\bigoplus_{j \neq i} E_j$, which has signature $(p',p')$ for some $p' \leq p$, and $\bigoplus_{j < i} E_j$ and $\bigoplus_{j>i}E_j$ are totally isotropic.
\end{lemma}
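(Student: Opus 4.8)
The plan is to first pin down the functional $\chi_i$ by a self-orthogonality argument, and then read off every orthogonality and signature statement from the strict ordering $\chi_1(X)<\cdots<\chi_r(X)$ fixed at the start of the proof, together with Lemma \ref{lem:relations_lyapunov}.

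First I would apply Lemma \ref{lem:relations_lyapunov} with $j=i$: were $2\chi_i\neq\chi$, we would get $E_i\perp E_i$, i.e.\ $E_i$ totally isotropic, contradicting the hypothesis of Case 1 (recall that by ergodicity the signature of each $E_i$ is constant on $\Lambda$). Hence $2\chi_i=\chi$, so $\chi_i=\chi/2$. Since the Lyapunov functionals are pairwise distinct, for every $j\neq i$ one has $\chi_j\neq\chi_i$, whence $\chi_i+\chi_j\neq 2\chi_i=\chi$; Lemma \ref{lem:relations_lyapunov} then gives $E_i\perp E_j$. Thus $E_i\perp\bigoplus_{j\neq i}E_j$. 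As $E=E_i\oplus\bigoplus_{j\neq i}E_j$ and $g$ is non-degenerate, any vector in the radical of $E_i$ would be orthogonal to all of $E$ and so vanish: $E_i$ is non-degenerate, and therefore so is its orthogonal complement $W:=\bigoplus_{j\neq i}E_j$.

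Next I would establish that $W^{-}:=\bigoplus_{j<i}E_j$ and $W^{+}:=\bigoplus_{j>i}E_j$ are totally isotropic. For $j,j'<i$ the inequalities $\chi_j(X),\chi_{j'}(X)<\chi_i(X)$ force $\chi_j(X)+\chi_{j'}(X)<2\chi_i(X)=\chi(X)$, so $\chi_j+\chi_{j'}\neq\chi$ and hence $E_j\perp E_{j'}$ by Lemma \ref{lem:relations_lyapunov} (the diagonal case $j=j'$ giving that each such $E_j$ is itself isotropic). Summing over $j,j'<i$ shows $W^{-}$ is totally isotropic, and the symmetric computation with the reversed inequality $\chi_j(X)+\chi_{j'}(X)>\chi(X)$ for $j,j'>i$ handles $W^{+}$.

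Finally, the signature claim is elementary linear algebra in a single fiber. The space $W$ is non-degenerate and splits as $W=W^{-}\oplus W^{+}$ with both summands totally isotropic. The pairing $W^{-}\times W^{+}\to\R$ induced by $g$ is then non-degenerate, since a vector in its left kernel would be orthogonal to both $W^{-}$ (isotropy) and $W^{+}$, hence to all of $W$; this forces $\dim W^{-}=\dim W^{+}=:p'$. A quadratic space of dimension $2p'$ carrying a totally isotropic subspace of dimension $p'$ has maximal isotropy index, hence signature exactly $(p',p')$, and since $W$ sits isometrically inside a fiber of signature $(p,q)$ with $p\leq q$ its isotropy index satisfies $p'\leq p$. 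The only genuinely delicate point is the self-orthogonality step fixing $\chi_i=\chi/2$; once that is in hand, everything else follows mechanically from the ordering of the $\chi_j(X)$ and from Lemma \ref{lem:relations_lyapunov}.
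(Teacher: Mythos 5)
Your proof is correct and follows essentially the same route as the paper: pin down $\chi=2\chi_i$ via Lemma \ref{lem:relations_lyapunov} applied to the pair $(i,i)$, then read off all orthogonality and isotropy statements from that lemma together with the ordering $\chi_1(X)<\cdots<\chi_r(X)$, and finish with the standard linear algebra of a non-degenerate space split into two totally isotropic summands. The only (harmless) variations are that you derive $E_i\perp E_j$ for $j\neq i$ from the distinctness of the Lyapunov functionals rather than from the ordering at $X$, and you phrase the non-degeneracy of $E_i$ via its radical rather than via the dimension count of $E_i^{\perp}$.
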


\begin{proof}
By Lemma \ref{lem:relations_lyapunov}, we get $\chi = 2 \chi_i$. Thus, if $j \leq i$ and $k < i$, then we have $\chi_j(X) + \chi_k(X) < \chi(X)$, and the same lemma implies that $\bigoplus_{1 \leq j < i} E_j$ is totally isotropic and orthogonal to $E_i$. Similar arguments work of course for indices greater than $i$ and we obtain that $E_i^{\perp}$ contains $\bigoplus_{1\leq j<i} E_j \oplus \bigoplus_{i < j \leq r} E_j$. The dimensions imply equality, and finally $E_i \cap E_i^{\perp} = 0$. The other claim is immediate because $E_i^{\perp}$ is non-degenerate and if $\R^{p',q'} = V_1\oplus V_2$ with $V_1,V_2$ totally isotropic, then $p'=q'$ and $\dim V_1 = \dim V_2 = p'$.
\end{proof}

Inside $\bigoplus_{j \neq i} E_j$, the subspaces $\bigoplus_{1\leq j<i} E_j$ and $\bigoplus_{r \geq j>i} E_j$ are maximally isotropic. Thus, for all $j < i$, there exists $f(j) > i$ such that $E_j$ and $E_{f(j)}$ are not orthogonal, because if not $\bigoplus_{r \geq j>i} E_j$ would not be maximally isotropic. Moreover, the integer $f(j)$ is uniquely determined by $\chi_j(X) + \chi_{f(j)}(X) = \chi(X)$. The same relation also implies that $\{j \mapsto f(j)\}$ is strictly decreasing because $\chi_1(X) < \cdots < \chi_r(X)$.

By symmetry, $f : \{1,\ldots,i-1\} \rightarrow \{i+1,\ldots,r\}$ must be a bijection, and $r$ is odd, equal to $2i-1$, and $f(j) = r+1-j$. We obtain that $E_j$ and $E_{r+1-j}$ are not orthogonal for $j < i$, implying $\chi_j+\chi_{r+1-j} = \chi$. Consequently, always by Lemma \ref{lem:relations_lyapunov}, all other couples $E_j,E_{j'}$ are orthogonal. Indeed, if for instance $j<i$ and $i<k<r+1-j$, then $\chi_j(X) + \chi_k(X) <\chi_j(X)+\chi_{r+1-j}(X) = \chi(X)$. Thus, $\chi_j+\chi_k \neq \chi$ and we can apply Lemma \ref{lem:relations_lyapunov}.

For all $j<i$, $E_j \oplus E_{r+1-j}$ is not totally isotropic and orthogonal to the sum of all other spaces. By the same argument as in the proof of Lemma \ref{lem:Ei_nondegenerate}, it must be non-degenerate. Consequently, $E_j$ and $E_{r+1-j}$ are maximally isotropic in it, and thus have the same dimension.

\begin{flushleft}
\textbf{Case 2}: For all $i$, $E_i$ is totally isotropic.
\end{flushleft}

Since the metric is non-degenerate, for all $i$, there exists $f(i)$ such that $E_i$ and $E_{f(i)}$ are not orthogonal. Thus $f(i)$ is uniquely determined by $\chi_i + \chi_{f(i)} = \chi$, proving that $f$ is strictly decreasing. Necessarily, $f(i) = r+1-i$ and $r$ must be even (if not, $E_{(r+1)/2}$ would not be totally isotropic). Consequently, if $i+j \neq r+1$ then $E_i$ and $E_j$ are orthogonal. Therefore, $\bigoplus_{i \leq r/2} E_i$ is totally isotropic, and so is $\bigoplus_{i > r/2} E_i$. These subspaces being in direct sum, the full space must have split signature $(p,p)$. 

Similarly to the end of Case 1, we conclude that $E_i \oplus E_{r+1-i}$ is non-degenerate and $E_i$ and $E_{r+1-i}$ are maximally isotropic in it.
\end{proof}

\section{Invariant measures and cocyle super-rigidity}
\label{s:invariant_measures}

From now on, we consider the main object of this article, which is a conformal action $\alpha : \Gamma \rightarrow \Conf(M,\bar{g})$, where $\Gamma$ is a cocompact lattice in a non-compact simple Lie group $G$ with finite center and of real-rank at least $2$, and $(M,\bar{g})$ a closed pseudo-Riemannian manifold of signature $(p,q)$, with $p+q \geq 3$ and $p \leq q$. The global assumption that we make is that the image of $\alpha$ in $\Conf(M,\bar{g})$ is unbounded. We still note $\pi : M^{\alpha} \rightarrow G/\Gamma$ the suspension of this action.

\subsection{Finite $\Gamma$-invariant measures}

The aim of this section is to establish the proposition below, valid also when $\Gamma$ is non-uniform, and saying that when $G$ is large enough, there are no finite, $\Gamma$-invariant measures on $M$. For instance, $\Gamma$ will have no finite orbit on $M$ and the action will be essential. 

\begin{proposition}
\label{prop:no_finite_invariant_measure}
Let $G$ be as above and assume moreover that $\g$ cannot be embedded into $\so(p,q)$. Let $\Gamma<G$ be a lattice that acts conformally on a compact pseudo-Riemannian manifold $(M,\bar{g})$ of signature $(p,q)$, and such that the image of $\Gamma$ in $\Conf(M,\bar{g})$ is unbounded. Then, $\Gamma$ does not preserve any finite measure on $M$.
\end{proposition}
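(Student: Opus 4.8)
The plan is to derive a contradiction from the existence of a finite $\Gamma$-invariant measure by invoking Zimmer's cocycle super-rigidity applied to the conformal cocycle, and then using rigidity of the conformal structure to turn a measurable algebraic conclusion into an actual embedding $\g \hookrightarrow \so(p,q)$.

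First I would suppose, for contradiction, that there is a finite $\Gamma$-invariant probability measure $\nu$ on $M$. The conformal structure $[\bar g]$ is a rigid geometric structure (of order $1$, as noted in the introduction), so it is natural to pass to the associated Cartan bundle $\hat M \to M$, a principal bundle on which $\Conf(M,\bar g)$, and hence $\Gamma$, acts freely and properly. The key mechanism here is that conformal diffeomorphisms act by automorphisms of an $(\R_{>0} \times O(p,q))$-structure, so the derivative cocycle of the $\Gamma$-action takes values in $\R_{>0} \times O(p,q) = \CO(p,q)$. Concretely, choosing a measurable frame field I would write the conformal cocycle $\beta : \Gamma \times M \to \CO(p,q)$ recording the Jacobians of the $\Gamma$-action on $TM$ with respect to the conformal structure.

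Next I would apply Zimmer's cocycle super-rigidity theorem to $\beta$ over the ergodic (or ergodic-component-wise) action of $\Gamma$ on $(M,\nu)$. Since $G$ is simple of real-rank at least $2$ with finite center and $\Gamma < G$ is a lattice, super-rigidity yields that $\beta$ is measurably cohomologous to a cocycle that factors through a homomorphism $\rho : G \to \CO(p,q)$ — more precisely, up to a compact correction and a coboundary, $\beta$ agrees with $\rho$ restricted to $\Gamma$ and extended via the $G$-structure. Because $G$ is simple and non-compact, the projection of $\rho(\g)$ to the $\R_{>0}$-factor is trivial (a simple Lie algebra has no nontrivial homomorphism to an abelian Lie algebra), so in fact $\rho$ lands in the semisimple part, giving a Lie algebra homomorphism $\g \to \so(p,q)$. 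Since $G$ is simple, this homomorphism is either trivial or injective; triviality would force the cocycle (up to compact part and coboundary) to be bounded, and combined with properness of the $\Gamma$-action on the Cartan bundle this would contradict the unboundedness of $\alpha(\Gamma)$ in $\Conf(M,\bar g)$. Hence the homomorphism is an embedding $\g \hookrightarrow \so(p,q)$, contradicting the hypothesis.

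\textbf{The main obstacle} I expect is the passage from the measurable, cohomological conclusion of super-rigidity to a genuine, bounded-away contradiction with unboundedness. Super-rigidity produces $\rho$ only up to a coboundary and a cocycle into a \emph{compact} group, so I must argue that the compact and coboundary parts cannot absorb the unbounded behavior of $\alpha(\Gamma)$: this is exactly where rigidity of the conformal structure is essential, since the free and proper action of $\Gamma$ on the Cartan bundle lets one translate ``$\alpha(\Gamma)$ unbounded in $\Conf(M,\bar g)$'' into unboundedness of the cocycle that no compact correction can tame. Making this step clean — controlling the coboundary via boundedness of $\varphi$ on the compact manifold $M$ and identifying the algebraic hull — is the technical heart, and it is presumably why the proposition is described as ``almost stated in anterior works of Zimmer,'' relying on the standard package of cocycle super-rigidity together with the rigidity of $[\bar g]$.
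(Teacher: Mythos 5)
Your overall strategy is the paper's: cocycle super-rigidity plus rigidity of the conformal structure via the Cartan bundle. But there is a concrete gap exactly at the step you flag as the ``main obstacle,'' and as sketched it does not close. You apply super-rigidity to the first-order derivative cocycle $\beta:\Gamma\times M\to \CO(p,q)$, and then want to contradict unboundedness using properness of the $\Gamma$-action on the Cartan bundle. These two objects do not match: the Cartan bundle $B$ is the \emph{second} prolongation, with structure group $P=(\R_{>0}\times O(p,q))\ltimes\R^n$, and the topology of $\Conf(M,\bar g)$ is read off from its free and proper action on $B$, i.e.\ from $2$-jets. A compact algebraic hull for the $1$-jet cocycle over an invariant measure does not by itself bound $\alpha(\Gamma)$: elements of the unipotent radical $\R^n\subset P$ fix a point with derivative equal to the identity there, so the first-order cocycle along an invariant measure can be completely tame while the conformal maps diverge. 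Note also that under the hypothesis your dichotomy collapses --- since $\g$ is simple and does not embed in $\so(p,q)$, the homomorphism $\rho$ is automatically trivial --- so the entire proof is the branch you left open. To repair it you would need either the prolongation induction of Zimmer (a compact reduction at order one propagates to order two because the first prolongation of a compact group is trivial), or, as the paper does, to run super-rigidity directly on the $P$-valued cocycle of a bounded measurable section of $B$.

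The paper's actual route is: (i) show $\alpha(\Gamma)$ is closed in $\Conf(M,\bar g)$ (a separate lemma using Margulis' normal subgroup theorem and property (T), which you omit but which is needed so that the quotient $\alpha(\Gamma)\backslash B$ is Hausdorff); (ii) apply the Fisher--Margulis version of cocycle super-rigidity to the $P$-valued cocycle, using that every morphism $\g\to(\R\oplus\so(p,q))\ltimes\R^n$ is trivial under the hypothesis, to get a cocycle cohomologous into a compact $K<P$ and hence a \emph{finite $\Gamma$-invariant measure on $B$}; (iii) conclude that a closed subgroup acting freely and properly on $B$ while preserving a finite measure has an orbit of full measure by ergodicity, hence finite Haar measure, hence is compact --- contradicting unboundedness. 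Your write-up never produces the invariant measure upstairs nor any substitute for step (iii), which is where the rigidity is actually cashed in.
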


\begin{remark}
We emphasize that the ideas we use in its proof are not new, and largely inspired from former works of Zimmer. To our knowledge, even though several similar results were already established, such a statement is not explicitly written or proved in the literature. For the sake of self-completeness, we have chosen to give a complete exposition of the arguments.
\end{remark}

We first observe that under our assumptions, the image of $\Gamma$ in $\Conf(M,\bar{g})$ is closed. It is a consequence of the general following result.

\begin{lemma}
\label{lem:Gamma_closed}
Let $G'$ be a Lie group and $\rho : \Gamma \rightarrow G'$ a morphism such that $\rho(\Gamma)$ is not relatively compact in $G'$. Then, $\rho(\Gamma)$ is closed in $G'$.
\end{lemma}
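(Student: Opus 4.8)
The plan is to analyze the closure $H := \overline{\rho(\Gamma)}$, which is a closed subgroup of $G'$ and therefore a Lie group in which $\rho(\Gamma)$ is dense. Showing that $\rho(\Gamma)$ is closed amounts to showing that $H$ is discrete, \emph{i.e.} that its identity component $H^0$ is trivial; indeed, if $H^0=\{e\}$ then $H$ is discrete and a dense subgroup of a discrete group is the whole group, so $\rho(\Gamma)=H$. I would argue by contradiction and assume $H^0 \neq \{e\}$.

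First I would set $\Gamma_0 := \rho^{-1}(H^0)$. Since $H^0$ is normal in $H$, $\Gamma_0$ is a normal subgroup of $\Gamma$, and since $H^0$ is open in $H$ and $\rho(\Gamma)$ is dense in $H$, the subgroup $\rho(\Gamma_0)=\rho(\Gamma)\cap H^0$ is dense in $H^0$. Now I invoke Margulis' Normal Subgroup Theorem, available because $\Gamma$ is a lattice in a higher-rank simple group with finite center: $\Gamma_0$ is either finite (central) or of finite index. The finite case is impossible, since a finite set cannot be dense in the positive-dimensional group $H^0$. Hence $[\Gamma:\Gamma_0]<\infty$, so $\Gamma_0$ is again a higher-rank lattice in $G$, and $\rho|_{\Gamma_0} : \Gamma_0 \to H^0$ has dense image in the connected Lie group $H^0$.

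The heart of the proof is then an application of Margulis' super-rigidity theorem to $\rho|_{\Gamma_0}$. After reducing modulo the solvable radical of $H^0$ — where property (T) of $\Gamma_0$, together with amenability of the radical, forces the corresponding part of the image to be relatively compact — super-rigidity yields the dichotomy: either $\rho(\Gamma_0)$ is relatively compact, or $\rho|_{\Gamma_0}$ extends to a continuous homomorphism $\hat\rho : G \to H^0$. In the first case $H^0=\overline{\rho(\Gamma_0)}$ is compact; moreover $H/H^0 \cong \rho(\Gamma)/\rho(\Gamma_0)$ is finite because $[\Gamma:\Gamma_0]<\infty$, so $H$ is compact and $\rho(\Gamma)$ is relatively compact, contradicting the hypothesis. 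In the second case, $G$ being simple forces $\ker\hat\rho$ to be discrete and central, hence finite (finite center of $G$); thus $\hat\rho(G)$ is a connected subgroup with semisimple Lie algebra, which is closed, and being dense it equals $H^0$. Then $\hat\rho(\Gamma_0)=\rho(\Gamma_0)$ is the image of a lattice under a surjection with finite kernel, hence a lattice in $H^0$, therefore discrete and closed — contradicting its density in the nontrivial connected group $H^0$.

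Every case leads to a contradiction, so $H^0=\{e\}$ and $\rho(\Gamma)$ is closed (indeed discrete). The step I expect to be the main obstacle is the clean application of super-rigidity, since the target $H^0$ is an arbitrary connected Lie group rather than a linear semisimple or algebraic group: the delicate part is the reduction past the solvable radical and the center so that the ``relatively compact or extends'' alternative is legitimately available, together with the classical fact that a connected subgroup with semisimple Lie algebra is closed.
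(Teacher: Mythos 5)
Your proposal is correct and follows essentially the same route as the paper: form the closure $H$, consider the normal subgroup $\Gamma_0=\rho^{-1}(H^0)$, apply Margulis' normal subgroup theorem, use property (T) to dispose of the amenable case, and use super-rigidity to rule out a dense image in a non-compact semisimple quotient. The only organizational difference is that the paper applies super-rigidity to the composition $\Gamma_0 \to H^0/R$ into the semisimple quotient (where ``dense image'' versus ``lattice image'' is an immediate contradiction), which sidesteps the delicate point you flag about extending a homomorphism into an arbitrary connected Lie group with nontrivial solvable radical.
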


\begin{proof}
Let $H$ be the closure of $\rho(\Gamma)$ in $G'$. Let $\Gamma_0 = \rho^{-1}(H_0)$ be the preimage of the identity component of $H$. Then, $\Gamma_0$ is normal in $\Gamma$ and by Margulis' Normal Subgroups Theorem, must be either finite or has finite index in $\Gamma$. We claim that $\Gamma_0$ is finite. To see it, we assume to the contrary that it has finite index in $\Gamma$. Since $\Gamma_0$ has property (T), we deduce that $H_0$ also has property (T) according to Theorem 1.3.4 of \cite{bekka_de_la_harpe_valette}. Let $R \triangleleft H_0$ be its solvable radical.

\begin{itemize}
\item Case 1: $H_0/R$ is non-compact. Composing $\rho$ with the projection, we obtain a morphism $\Gamma_0 \rightarrow H_0/R$ with dense image. As it follows from Margulis' Super-rigidity Theorem, there does not exist a morphism $f : \Gamma_0 \rightarrow S$ into a connected, non-compact, semi-simple Lie group $S$ such that $f(\Gamma_0)$ is dense in $S$ for the Lie group topology, and we obtain a contradiction. We omit details here, the idea is that the projection of $f(\Gamma_0)$ on a non-compact simple factor would still have to be dense, but at the same time a lattice by super-rigidity.

\item Case 2: $H_0/R$ is compact. In this case, $H_0$ is amenable. Since it also has (T), $H_0$ itself is compact. This contradicts the fact that $\rho(\Gamma)$ is unbounded in $G'$.
\end{itemize}

Finally, we get that $\Gamma_0$ is finite, and since it is dense in $H_0$, we conclude that $\Gamma_0 = \Ker \rho$ and $H_0=\{e\}$, \textit{i.e.} $\rho(\Gamma)=H$.
\end{proof}

Thus $\Gamma$ is closed in $\Conf(M,\bar{g})$ - without assuming that $\g$ does not embed into $\so(p,q)$. We remind that the Lie group structure of $\Conf(M,\bar{g})$ is defined by considering its action on $B$, the second prolongation of the $(\R_{>0} \times O(p,q))$-structure associated to the conformal class $[\bar{g}]$ (see \cite{kobayashi}, Ch. I, Theorem 5.1). All we need to know here is that $B$ is a principal bundle over $M$, with structure group $P:=(\R_{>0} \times O(p,q)) \ltimes \R^n$, and that the action of $\Conf(M,\bar{g})$ on $M$ lifts to an action by bundle automorphisms of $B$, which is \textit{free and proper}. The differential structure on $\Conf(M,\bar{g})$ is then obtained by identifying it with any of its orbits in $B$.

Assume now that a closed subgroup $H < \Conf(M,\bar{g})$ acts on $B$, preserving a finite measure $\mu$, which we can assume to be $H$-ergodic. Then, $H$ has to be compact. To see it, consider the natural projection $p : B \rightarrow H \! \setminus \! B$. Since $H$ is closed, its action on $B$ is proper, and the target space is Hausdorff. Therefore, $p$ must be $\mu$-essentially constant by ergodicity, meaning that $H$ has an orbit of full measure in $B$ (this argument is a basic case of Proposition 2.1.10 of \cite{zimmer_ergodic}). Since the action is free, this implies that $H$ has finite Haar measure, so $H$ is compact.

Thus, the proof of Proposition \ref{prop:no_finite_invariant_measure} will be completed with the following lemma based on cocycle super-rigidity, and inspired from the arguments of \cite{zimmer84}, page 23.

\begin{lemma}
\label{prop:gamma_invariant_measures}
If $\g$ does not embed into $\so(p,q)$ and if there exists a finite $\Gamma$-invariant measure $\mu$ on $M$, then there exists a finite $\Gamma$-invariant measure $\mu_B$ on the prolongation bundle $B$.
\end{lemma}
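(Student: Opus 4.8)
The plan is to lift $\mu$ to $B$ by producing a measurable reduction of the structure group to a compact subgroup, whose existence will come from Zimmer's cocycle super-rigidity together with the hypothesis $\g \not\hookrightarrow \so(p,q)$. By decomposing $\mu$ into its $\Gamma$-ergodic components and treating each separately, I may assume $\mu$ is $\Gamma$-ergodic. Since the $\Gamma$-action on $M$ lifts to a free and proper action by bundle automorphisms of the principal $P$-bundle $B$, where $P = \CO(p,q) \ltimes \R^n$ and $\CO(p,q) = \R_{>0} \times O(p,q)$, a measurable section $s : M \to B$ (defined $\mu$-almost everywhere) yields a cocycle $\beta : \Gamma \times M \to P$ over $(M,\mu)$, characterized by $\gamma \cdot s(x) = s(\gamma x)\,\beta(\gamma,x)$.

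The key step is to show that $\beta$ is cohomologous to a cocycle valued in a compact subgroup $K < P$. I would first project $\beta$ to the reductive quotient $\CO(p,q) = P/\R^n$ and apply Zimmer's cocycle super-rigidity theorem \cite{zimmer_ergodic} to the lattice $\Gamma$ (equivalently, one induces the cocycle to the $G$-action on the suspension $M^{\alpha}$ endowed with its $G$-invariant measure). Super-rigidity identifies the algebraic hull of this cocycle, modulo compact factors, with the image of a continuous homomorphism $\sigma : G \to \CO(p,q)$. Composing $\sigma$ with the projection $\CO(p,q) \to \R_{>0}$ gives a homomorphism from the simple group $G$ to an abelian group, hence trivial; meanwhile the induced Lie algebra map $\g \to \so(p,q)$ is either zero or injective, its kernel being an ideal of the simple Lie algebra $\g$, and it cannot be injective because $\g$ does not embed into $\so(p,q)$. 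Thus $\sigma$ is trivial and the $\CO(p,q)$-valued cocycle reduces to a compact subgroup of $O(p,q)$. It then remains to absorb the unipotent direction: after this reduction the residual cocycle takes values in $\R^n$ twisted by a compact group, and since $\Gamma$ has property (T) the associated first cohomology class vanishes, so this part can be gauged away as well. Altogether this produces a measurable map $\phi : M \to P$ such that $\beta'(\gamma,x) := \phi(\gamma x)^{-1}\beta(\gamma,x)\phi(x) \in K$ for a fixed compact $K < P$.

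Finally I would construct $\mu_B$ explicitly. Setting $s'(x) = s(x)\phi(x)$, one has $\gamma \cdot s'(x) = s'(\gamma x)\,\beta'(\gamma,x)$ with $\beta'(\gamma,x) \in K$. Define $\mu_B = \int_M (s'(x))_* m_K \, d\mu(x)$, where $m_K$ is the normalized Haar measure of $K$ and $(s'(x))_* m_K$ denotes its push-forward under the map $p \mapsto s'(x)p$, a probability measure supported on the fibre $B_x$. By $P$-equivariance of the $\Gamma$-action together with the left $K$-invariance of $m_K$, the fibre measure over $x$ is carried exactly to the fibre measure over $\gamma x$; integrating against the $\Gamma$-invariant measure $\mu$ then gives $\gamma_* \mu_B = \mu_B$. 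Hence $\mu_B$ is a finite (indeed probability) $\Gamma$-invariant measure on $B$.

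I expect the main obstacle to be the precise invocation of cocycle super-rigidity and the extraction of compactness of the algebraic hull: the delicate point is to control the two non-semisimple directions of $P$ simultaneously, namely the central factor $\R_{>0}$ (trivialized using $H^1(\Gamma,\R)=0$) and the unipotent radical $\R^n$ (trivialized using property (T)), so that the reduction lands in a genuinely compact subgroup. This compactness is exactly what makes the fibrewise integration of Haar measure yield a finite invariant measure rather than an infinite one.
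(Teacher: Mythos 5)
Your proposal is correct and follows the same overall strategy as the paper: reduce the structure-group cocycle of the prolongation bundle $B$ to a compact subgroup $K<P$ using cocycle super-rigidity together with the hypothesis that $\g$ does not embed into $\so(p,q)$, and then fiber the Haar measure of $K$ over $\mu$; your formula $\mu_B=\int_M (s'(x))_*m_K\,\d\mu(x)$ is exactly the measure $(\varphi^{-1})_*(\mu\otimes m_K)$ constructed in the paper, and your invariance computation is the right one. The only place where you genuinely diverge is in how the compact reduction is obtained. The paper invokes the Fisher--Margulis version of cocycle super-rigidity, which applies directly to cocycles valued in the full non-reductive algebraic group $P'=(\R^*\times O(p,q))\ltimes\R^n$ and produces the compact reduction in one step once one checks that every homomorphism $G\to P'$ is trivial; the only residual work is the two-component issue ($\R_{>0}$ versus $\R^*$), which is handled with the central element $(-1,-\id)$. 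You instead split $P$ into its reductive quotient $\CO(p,q)$ and its unipotent radical $\R^n$, apply classical super-rigidity to the former and property (T) to the latter. This route is legitimate and classical, but two points need sharpening. First, the step ``the residual cocycle takes values in $\R^n$ twisted by a compact group and property (T) gauges it away'' requires the correct statement: what you want is Zimmer's theorem that, for an ergodic probability-preserving action of a property (T) group, any measurable cocycle into an amenable algebraic group (here the compact-by-abelian group $K_0\ltimes\R^n$, and likewise the central $\R_{>0}$ factor) is cohomologous to a cocycle into a compact subgroup; the vanishing of $\Hom(\Gamma,\R)$ alone does not treat the twisted coefficients or the measurable (as opposed to group) cohomology. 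Second, super-rigidity requires an algebraic target, so as in the paper you must pass through $\R^*\times O(p,q)$ and then descend back to the index-two subgroup $\R_{>0}\times O(p,q)$; this is a minor but non-optional detour. Neither point is a genuine gap, only a matter of invoking the precise results.
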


\begin{proof}
Considering an ergodic component, we may assume that $\mu$ is $\Gamma$-ergodic. Let us note $p : B \rightarrow M$ the projection of the bundle, whose fibers are given by the free and proper right action of $P$ on $B$. The key point is that the action of $\Gamma$ on the bundle $B$ has to preserve a measurable sub-bundle with compact fiber, and this comes from Zimmer's cocycle super-rigidity. To be precise, the claim is the following.

\begin{sublemma}
\label{sublem:compact_algebraic_hull}
The exist a compact subgroup $K \subset P$, a measurable section $\sigma_K : M \rightarrow B$, and a cocycle $c_K : \Gamma \times M \rightarrow K$ such that 
\begin{equation*}
\gamma.\sigma_K(x) = \sigma_K(\gamma.x).c_K(\gamma,x)
\end{equation*}
for all $\gamma \in \Gamma$ and for $\mu$-almost every $x \in M$.
\end{sublemma}

\begin{proof}[Proof (Sub-lemma \ref{sublem:compact_algebraic_hull})]
Let us fix a bounded measurable section $\sigma : M \rightarrow B$, and let $c : \Gamma \times M \rightarrow P$ be the associated cocycle. We use Fisher-Margulis' extension of Zimmer's cocycle super-rigidity, formulated in Theorem 1.5 of \cite{fisher_margulis}. Up to passing to a finite cover of $G$ and lifting $\Gamma$ to it, they satisfy the hypothesis of this theorem. We note $P' = (\R^* \times O(p,q)) \ltimes \R^n$ the Zariski closure of $P$ in $O(p+1,q+1)$. We note $\epsilon = (-1,-\id) \in \R^* \times O(p,q)$ the central element of $P'$ such that $P' = P \sqcup \epsilon P$.

By assumption, any morphism $\g \rightarrow \so(p,q)$ is trivial. Considering the projection to the linear part, it follows that any morphism from $\g$ to $(\R\oplus \so(p,q)) \ltimes \R^n$ is also trivial. By connectedess of $G$, every morphism from $G$ to $P'$ is also trivial. Consequently, Theorem 1.5 of \cite{fisher_margulis} gives a compact subgroup $K' < P'$ such that $c$ is cohomologous to a $K'$-valued cocycle. It means that there exists a measurable $f' : M \rightarrow P'$ such that $f'(\gamma.x)^{-1} c(\gamma,x) f'(x) \in K'$ for all $\gamma$ and for almost every $x$.

We define $f : M \rightarrow P$ by $f(x)=f'(x)$ if $f(x) \in P$ and $f(x) = \epsilon f'(x)$ if not. Then, for all $\gamma$ and for $\mu$-almost every $x$, $f(\gamma.x)^{-1}c(\gamma,x)f(x) \in P\cap (K' \cup \epsilon K') =: K$. The latter is a compact subgroup of $P$ since $K' \cup \epsilon K'$ is a compact subgroup of $P'$. The section $\sigma_K(x) = \sigma(x).f(x)$ is the announced one.
\end{proof}

The set $\Lambda \subset M$ of points of $M$ at which the conclusion of Sub-lemma \ref{sublem:compact_algebraic_hull} is valid for any $\gamma \in \Gamma$ has full measure and is $\Gamma$-invariant. The section $\sigma_K$ provides a measurable trivialization $\varphi : B \rightarrow M \times P$ through which the action of an element $\gamma$ on $p^{-1}(\Lambda)$ reads $(x,p) \mapsto (\gamma.x,c_K(\gamma,x).p)$ for all $x \in \Lambda$ and $p \in P$. Thus, $\Gamma$ preserves the Borel set $\varphi^{-1}(\Lambda \times K)$, and preserves the measure $(\varphi^{-1})_*(\mu \otimes m_K)$ on it, where $m_K$ denotes the Haar measure of $K$.
\end{proof}

\subsection{Arguments from differentiable dynamics} 
\label{ss:differentiable_dynamics}

We cite in this section general results about differentiable actions of $\Gamma$ on compact manifolds which give sufficient conditions for the existence of invariant measures. They are proved and used in \cite{BRHW} and \cite{BFH}, but do not require the manifold to be low-dimensional. 

We remind the general fact:

\begin{lemma}[\cite{nevo_zimmer}, Lem. 6.1]
If $G$ preserves a finite measure on $M^{\alpha}$, then $\Gamma$ preserves a finite measure on $M$.
\end{lemma}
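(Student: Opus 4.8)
The plan is to prove the contrapositive direction carefully by relating measures on $M^{\alpha}$ to measures on $M$ through the suspension structure. Recall that $M^{\alpha} = (G \times M)/\Gamma$ fibers over $G/\Gamma$ with fiber $M$, and the key observation is that a $G$-invariant finite measure on $M^{\alpha}$ can be disintegrated along this fibration. Since $G$ acts on $M^{\alpha}$ preserving the fibration $\pi : M^{\alpha} \to G/\Gamma$, and $G$ acts transitively on the base $G/\Gamma$ by left translations, the pushforward $\pi_* \mu$ of a $G$-invariant finite measure $\mu$ must be a $G$-invariant finite measure on $G/\Gamma$, hence a multiple of the $G$-invariant probability measure $m_{G/\Gamma}$ (which exists precisely because $\Gamma$ is a lattice).

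First I would set up the disintegration of $\mu$ over the base: write $\mu = \int_{G/\Gamma} \mu_{g\Gamma} \, d m_{G/\Gamma}(g\Gamma)$, where each $\mu_{g\Gamma}$ is a probability measure supported on the fiber $\pi^{-1}(g\Gamma)$. The plan is to transport everything to the basepoint $e\Gamma$ and extract a $\Gamma$-invariant measure on $M$. Concretely, the fiber over $e\Gamma$ is parametrized by $\psi_e : M \to M^{\alpha}$, $\psi_e(x) = [(e,x)]$, so $\nu := (\psi_e^{-1})_* \mu_{e\Gamma}$ is a candidate finite measure on $M$. The heart of the argument is to verify that $\nu$ is $\Gamma$-invariant. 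This follows from the $G$-invariance of $\mu$ combined with the compatibility of the parametrizations $\psi_g$: for $\gamma \in \Gamma$, the left action of $\gamma \in G$ sends the fiber over $e\Gamma$ to the fiber over $\gamma\Gamma = e\Gamma$, and in the coordinates given by $\psi_e$ this return map is exactly the action of $\alpha(\gamma^{-1})$ (or $\alpha(\gamma)$, depending on conventions) on $M$. Since $\mu$ is $G$-invariant it is in particular $\gamma$-invariant, and because $\gamma$ preserves the fiber over $e\Gamma$, the conditional measure $\mu_{e\Gamma}$ is preserved by this return map, which translates to $\alpha(\gamma)$-invariance of $\nu$.

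The steps I would carry out in order are: (1) invoke existence and essential uniqueness of the disintegration of $\mu$ over $\pi$; (2) identify $\pi_*\mu$ with a multiple of $m_{G/\Gamma}$ using $G$-invariance and transitivity on the base; (3) show that for $\gamma \in \Gamma$ the stabilizer of the basepoint $e\Gamma$ in $G$ acting on $M^{\alpha}$ induces the $\Gamma$-action on $M$ via $\psi_e$ (this is precisely the "return map" encoding of $\alpha$ mentioned in Section \ref{s:suspension}); (4) deduce by uniqueness of disintegration and $G$-invariance of $\mu$ that $\mu_{e\Gamma}$ is invariant under these return maps; (5) push forward through $\psi_e^{-1}$ to obtain the $\Gamma$-invariant finite measure $\nu$ on $M$, finite because $\mu_{e\Gamma}$ is a probability measure.

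The step I expect to be the main obstacle is (4): making rigorous the interaction between the almost-everywhere nature of the disintegration and the group invariance. The subtlety is that disintegration is only defined $m_{G/\Gamma}$-almost everywhere, so the conditional measure over the single point $e\Gamma$ is not a priori well-defined, and one must argue that $\Gamma$-invariance of the family $(\mu_{g\Gamma})$ as a whole descends to a genuine invariant measure on the fiber over $e\Gamma$. One clean way to bypass this is to avoid singling out one fiber entirely: instead, observe directly that $G$-invariance of $\mu$ forces equivariance $\mu_{g\Gamma} = (\psi_g)_* \big( (\psi_e^{-1})_* \mu_{e\Gamma}\big)$ up to the identifications, and that the consistency condition relating $\psi_{g\gamma}$ and $\psi_g \circ \alpha(\gamma)^{-1}$ on overlaps is exactly the cocycle relation defining $\Gamma$-invariance of $\nu$. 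This essential-uniqueness argument, rather than a pointwise one, is what makes the conclusion hold for $m_{G/\Gamma}$-almost every fiber simultaneously and hence yields a well-defined $\Gamma$-invariant $\nu$ on $M$.
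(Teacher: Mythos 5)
The paper does not prove this lemma --- it is quoted verbatim from Nevo--Zimmer --- so there is no internal proof to compare against; your argument is the standard one for the suspension (induced action) correspondence and it is correct. You disintegrate the $G$-invariant measure $\mu$ over $\pi:M^{\alpha}\to G/\Gamma$, identify $\pi_*\mu$ with Haar measure, and recover a $\Gamma$-invariant measure on $M$ from the conditionals read in the parametrizations $\psi_g$; crucially, you also identify the genuine technical point (conditionals are only defined almost everywhere, so one cannot simply evaluate at the single fiber over $e\Gamma$) and resolve it the right way, by showing that $g\mapsto(\psi_g^{-1})_*\mu_{g\Gamma}$ is essentially invariant under every left translation, hence essentially constant, with the residual ambiguity $\psi_{g\gamma}=\psi_g\circ\alpha(\gamma)$ translating exactly into $\alpha(\gamma)_*\nu=\nu$. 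The only nitpick is a sign you already flagged: with the paper's convention $\gamma.(g,x)=(g\gamma,\gamma^{-1}.x)$ one has $\psi_{g\gamma}=\psi_g\circ\alpha(\gamma)$ and the return map on the fiber over $e\Gamma$ is $\alpha(\gamma)$ itself, not its inverse; this does not affect the conclusion.
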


Thus, in our situation, the previous section implies that when $\g$ does not embed into $\so(p,q)$, it is not possible to construct any $G$-invariant finite measure on $M^{\alpha}$.

Let $A < G$ be a Cartan subspace. The heuristic of an important step in the proof of \cite{BFH} is that if the restricted root-system of $G$ is ``large'' compared to the number of vertical Lyapunov functionals of an $A$-invariant, $A$-ergodic measure $\mu$ on $M^{\alpha}$ which projects to the Haar measure of $G/\Gamma$, then $\mu$ is invariant under a lot of restricted root-spaces $G_{\lambda}$, and incidentally $G$-invariant.

Consequently, Proposition \ref{prop:gamma_invariant_measures} forbids such a configuration and implies interesting restrictions on the Lyapunov functionals.

\subsubsection{Non-zero vertical Lyapunov exponent}

The proof of Theorem \ref{thm:main} uses the following general property of differentiable actions. It does not appear explicitly in \cite{BFH}, and is used in a simpler approach to Zimmer's conjecture for cocompact lattices of $\SL(n,\R)$. An exposition of this simpler proof can be found in \cite{Brown} and \cite{cantat}.

\begin{proposition}[\cite{BFH}]
\label{prop:G-invariance}
Let $\pi : M^{\alpha} \rightarrow G / \Gamma$ be the suspension of an action $\alpha : \Gamma \rightarrow \Diff(M)$, and let $A<G$ be a Cartan subspace. Let $\mu$ be an $A$-invariant, $A$-ergodic measure on $M^{\alpha}$ such that $\pi_* \mu$ is the Haar measure of $G/\Gamma$. If there exists a non-trivial element $g \in A$ all of whose vertical Lyapunov exponents are zero, then $\mu$ is $G$-invariant.
\end{proposition}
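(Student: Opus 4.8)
The plan is to exploit the structure theory of $G$ together with the stable-manifold theory from Ledrappier–Young in order to promote invariance under a single element of $A$ to invariance under all the root groups $G_\lambda$, which generate $G$. The starting observation is that if $g = e^{X_0} \in A$ has all vertical Lyapunov exponents equal to zero, then by the theorem of Oseledec recalled above, the vertical cocycle of $g$ has subexponential growth in both directions along the ray $\R_{>0} X_0$; combined with the known horizontal exponents (which are $\lambda(X_0)$ for $\lambda \in \Sigma$), the \emph{total} Lyapunov spectrum of $g$ on $TM^\alpha$ consists only of the horizontal root-values. Thus the stable and unstable distributions of the flow $e^{tX_0}$ inside $M^\alpha$ are \emph{horizontal}: the stable manifold of a typical point is tangent to $\bigoplus_{\lambda(X_0)<0}\g_\lambda$ and the unstable manifold to $\bigoplus_{\lambda(X_0)>0}\g_\lambda$, with the vertical directions and $\g_0$ lying in the central (zero-exponent) part.

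The main mechanism, which is the content I would import from \cite{BFH}, is the \emph{entropy/invariance argument via the Ledrappier–Young formula}. First I would verify that $g$ acts with the same entropy for $\mu$ as the corresponding homogeneous dynamics downstairs on $G/\Gamma$, because the fiber contribution to the entropy is governed by the positive vertical exponents, all of which vanish. Concretely, the Ledrappier–Young formula expresses the metric entropy $h_\mu(g)$ as a sum over positive Lyapunov exponents weighted by the dimensions of the corresponding entropy (Furstenberg-type) subspaces; since the vertical exponents are zero, only the horizontal exponents contribute, and the sum equals the entropy $h_{\mathrm{Haar}}(g)$ of $g$ acting on $(G/\Gamma,\pi_*\mu)$. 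This forces $\mu$ to have maximal fiber entropy deficit zero, i.e. the conditional measures of $\mu$ along unstable manifolds must be the natural (Haar-like, $G_\lambda$-invariant) measures on the horizontal unstable leaves. The standard Ledrappier–Young rigidity then yields that $\mu$ is invariant under the unstable horospherical subgroup $U^+ = \prod_{\lambda(X_0)>0} G_\lambda$, and symmetrically (applying the argument to $g^{-1}$) under $U^- = \prod_{\lambda(X_0)<0} G_\lambda$.

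The final step is purely algebraic: since $\Rk_\R G \geq 2$ and $G$ is simple, one may choose $X_0$ generic enough (regular) so that $U^+$ and $U^-$ together with $A$ generate a subgroup that is not contained in any proper parabolic, and indeed $\langle U^+, U^-\rangle = G$ by simplicity. Therefore $\mu$ is $G$-invariant, as claimed. I expect the \textbf{main obstacle} to be the careful bookkeeping in the Ledrappier–Young step: one must check that the \emph{stable} manifolds of the flow on $M^\alpha$ are genuinely tangent to the horizontal root spaces (so that the conditional measures transform correctly under the $G_\lambda$) and that the entropy comparison is an equality rather than merely an inequality. The inequality $h_\mu(g) \leq h_{\mathrm{Haar}}(g)$ is automatic from the projection $\pi$ decreasing entropy, so the real work is the reverse inequality, which is exactly where the vanishing of the vertical exponents is used to show the fiberwise entropy vanishes; this is the technical heart borrowed from \cite{BFH}, and I would cite it rather than reprove it.
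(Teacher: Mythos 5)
Your proposal follows essentially the same route as the paper, which for this proposition simply defers to the proof of Theorems 11.1 and 11.1' in \cite{Brown} (or Proposition 8.7 of \cite{cantat}): since the vertical exponents vanish, the unstable foliation of $\phi_{X_0}^t$ on $M^{\alpha}$ coincides with the orbit foliation of the horospherical group $U^+$, an entropy comparison forces the leafwise conditional measures to be Haar along these orbits, hence $U^+$-invariance and, applying the argument to $g^{-1}$, $U^-$-invariance; these groups then generate $G$. Two points in your write-up should nevertheless be corrected. First, you have the two entropy inequalities reversed: a factor map can only \emph{decrease} entropy, so what is automatic from $\pi$ is $h_{\mathrm{Haar}}(g)=h_{\pi_*\mu}(g)\leq h_\mu(g)$, not the bound you call automatic. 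The opposite inequality $h_\mu(g)\leq\sum_{\lambda(X_0)>0}\lambda(X_0)\dim\g_\lambda=h_{\mathrm{Haar}}(g)$ is the Margulis--Ruelle inequality, and this is precisely where the vanishing of the vertical exponents enters (they contribute nothing to the sum over positive exponents). The resulting sandwich forces equality in Margulis--Ruelle, which is what triggers the Ledrappier--Young rigidity of the unstable conditionals. Second, you cannot ``choose $X_0$ regular'': the element $g$ is \emph{given} by the hypothesis and may be singular, so the generation statement must be proved for the subgroup generated by the root spaces $\g_{\pm\lambda}$ with $\lambda(X_0)\neq 0$ together with $A$; this is handled in the cited references (the stabilizer of $\mu$ is a closed subgroup whose Lie algebra is saturated by restricted root spaces and contains a symmetric set of them, which forces it to be all of $\g$). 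Neither issue changes the architecture of the argument, which is exactly the one the paper imports from \cite{BFH}.
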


\begin{proof}
See \cite{Brown}, proof of Theorem 11.1 and 11.1' in Section 11, or \cite{cantat} Proposition 8.7. The assumption $\dim M < \Rk_{\R}G$ is only used to exhibit an element $g \in A$ whose vertical Lyapunov spectrum is reduced to $\{0\}$ (claim (11.1) in the proof of Theorem 11.1 of \cite{Brown}, p. 46). The above statement follows from the arguments presented after this claim.
\end{proof}

In our situation of an unbounded conformal action $\alpha : \Gamma \rightarrow \Conf(M,\bar{g})$, the combination of Proposition \ref{prop:G-invariance} and Proposition \ref{prop:no_finite_invariant_measure} immediately gives:

\begin{corollary}
\label{cor:span}
Let $\alpha : \Gamma \rightarrow \Conf(M,\bar{g})$ be an unbounded conformal action in signature $(p,q)$. Let $\mu$ be an $A$-invariant, $A$-ergodic finite measure on $M^{\alpha}$ which projects to the Haar measure of $G/\Gamma$, and let $\chi_1,\ldots,\chi_r \in \a^*$ be the vertical Lyapunov exponents of $\mu$. If $\g$ does not embed into $\so(p,q)$, then $\chi_1,\ldots,\chi_r$ linearly span $\a^*$.
\end{corollary}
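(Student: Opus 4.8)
The plan is to argue by contradiction, using the hypothesis that $\g$ does not embed into $\so(p,q)$ to rule out a degenerate configuration of the Lyapunov functionals. Suppose that $\chi_1,\ldots,\chi_r$ do not span $\a^*$. Then their common kernel $\bigcap_{1 \leq i \leq r} \Ker \chi_i$ is a linear subspace of $\a$ of dimension at least $\Rk_{\R} G - (r') \geq 1$, where $r'$ is the dimension of the span; in particular it is non-trivial. First I would pick a non-zero element $X$ in this common kernel, so that $\chi_i(X) = 0$ for every $i$, and set $g = e^X \in A$. By definition of the $\chi_i$ as the vertical Lyapunov exponents of $\mu$, the element $g$ is a non-trivial element of $A$ all of whose vertical Lyapunov exponents vanish.

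The next step is to feed this $g$ into Proposition \ref{prop:G-invariance}. Since $\mu$ is $A$-invariant, $A$-ergodic, and projects to the Haar measure of $G/\Gamma$, the hypotheses of that proposition are met, and the existence of a non-trivial $g \in A$ with trivial vertical Lyapunov spectrum forces $\mu$ to be $G$-invariant. I would then invoke the Nevo--Zimmer lemma cited above (Lemma 6.1 of \cite{nevo_zimmer}): a finite $G$-invariant measure on $M^{\alpha}$ produces a finite $\Gamma$-invariant measure on $M$.

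This is where the assumption on $\g$ closes the argument. Because the action $\alpha$ is unbounded and $\g$ does not embed into $\so(p,q)$, Proposition \ref{prop:no_finite_invariant_measure} asserts that $\Gamma$ preserves \emph{no} finite measure on $M$, contradicting the measure just obtained. Hence the assumption that the $\chi_i$ fail to span $\a^*$ is untenable, and they must span. The argument is essentially a direct chaining of results already established, so I do not expect a genuine obstacle; the only point requiring care is the very first step, namely verifying that a non-spanning family of functionals genuinely yields a non-zero common-kernel direction $X$ (this uses $\Rk_{\R} G \geq 2$ only through $\dim \a \geq 1$, but the failure to span is what guarantees the kernel is strictly positive-dimensional), and checking that all the standing hypotheses of Proposition \ref{prop:G-invariance} and Proposition \ref{prop:no_finite_invariant_measure} are indeed satisfied by the given $\mu$ and $\alpha$.
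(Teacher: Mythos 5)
Your argument is correct and is exactly the one the paper intends: the corollary is stated there as an immediate consequence of Proposition \ref{prop:G-invariance} (applied to $e^X$ for $X$ a nonzero element of the common kernel of the $\chi_i$, which exists precisely when they fail to span $\a^*$), the Nevo--Zimmer lemma passing from a $G$-invariant measure on $M^{\alpha}$ to a $\Gamma$-invariant one on $M$, and Proposition \ref{prop:no_finite_invariant_measure}. No gaps.
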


\subsubsection{Resonance}

A more advanced property, proved in \cite{BRHW}, is used in \cite{BFH} to obtain $G$-invariant measures on $M^{\alpha}$. Let $A<G$ be a Cartan subspace.

\begin{definition}
Let $\mu$ be an $A$-invariant, $A$-ergodic measure on $M^{\alpha}$, with vertical Lyapunov functionals $\chi_1, \ldots, \chi_r$. A restricted root $\lambda \in \Sigma$ is said to be $\mu$-resonant if there exists a vertical Lyapunov exponent $\chi_i$ and $c >0$ such that $\lambda = c \chi_i$.
\end{definition}

\begin{proposition}[\cite{BRHW}, Prop. 5.1]
\label{prop:resonance}
Let $\mu$ be an $A$-invariant, $A$-ergodic probability measure on $M^{\alpha}$ which projects to the Haar measure of $G/\Gamma$. If $\lambda \in \Sigma$ is not $\mu$-resonant, then $\mu$ is $G_{\lambda}$-invariant.
\end{proposition}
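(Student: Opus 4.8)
The plan is to show that the conditional measures of $\mu$ along the orbits of the coarse root subgroup containing $G_\lambda$ are Haar, which is equivalent to $G_\lambda$-invariance. Write $U_\lambda = \exp(\g_\lambda \oplus \g_{2\lambda})$ for the coarse root subgroup (with $\g_{2\lambda} = 0$ when $2\lambda \notin \Sigma$); since $G_\lambda \subset U_\lambda$, it suffices to prove that $\mu$ is $U_\lambda$-invariant. First I would fix a regular $s \in A$ with $\lambda(s) > 0$ and analyze the full Lyapunov spectrum of $s$ on $TM^{\alpha}$: by the discussion of Section \ref{s:suspension} it splits into a horizontal part, carried by the $G$-orbit distribution and equal to the restricted roots $\lambda' \in \Sigma$ (together with $0$ on $\g_0$), and a vertical part, carried by $F^{\alpha}$ and equal to the functionals $\chi_1,\dots,\chi_r$.

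Second, I would identify the coarse Lyapunov foliation $W^{[\lambda]}$ associated to the ray $\R_{>0}\lambda$: its tangent bundle is the sum of all Lyapunov subspaces whose exponent is a positive multiple of $\lambda$. The horizontal contributions are exactly $\g_\lambda$ and $\g_{2\lambda}$, while the vertical contributions are those $\chi_i$ lying in $\R_{>0}\lambda$. This is where the hypothesis enters: $\lambda = c\chi_i$ for some $c > 0$ is precisely the statement that $\chi_i \in \R_{>0}\lambda$, so if $\lambda$ is not $\mu$-resonant then no vertical exponent lies in this ray. Hence $W^{[\lambda]}$ is purely horizontal, and its leaves coincide with the $U_\lambda$-orbits; since $s$ expands these orbits by $e^{\lambda(s)}$ (resp.\ $e^{2\lambda(s)}$), they are smooth, uniformly expanded embedded submanifolds and therefore genuinely the coarse Pesin manifolds.

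Third comes the entropy comparison, which is the heart of the matter. Let $h^{[\lambda]}_\mu(s)$ denote the entropy of $s$ subordinate to $W^{[\lambda]}$ in the sense of the higher-rank Ledrappier--Young theory. Since $\pi$ is an $A$-equivariant factor map onto $(G/\Gamma, \mathrm{Haar})$ and $W^{[\lambda]}$ projects leafwise onto the corresponding coarse foliation of $G/\Gamma$, this partial entropy dominates that of the base, whereas the Ledrappier--Young inequality bounds it above by the sum of the positive exponents along $W^{[\lambda]}$:
\[
\lambda(s)\dim\g_\lambda + 2\lambda(s)\dim\g_{2\lambda} \;=\; h^{[\lambda]}_{\mathrm{Haar}}(s) \;\leq\; h^{[\lambda]}_\mu(s) \;\leq\; \lambda(s)\dim\g_\lambda + 2\lambda(s)\dim\g_{2\lambda}.
\]
The left equality is the Margulis--Ruelle equality on the homogeneous space, where Haar is the smooth measure and attains the maximal entropy in each coarse direction; the right inequality uses non-resonance, so that the only positive exponents in $[\lambda]$ are the horizontal $\lambda,2\lambda$. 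The squeeze forces $h^{[\lambda]}_\mu(s)$ to equal its maximal Ledrappier--Young value, and the equality case of the Ledrappier--Young inequality says exactly that the conditional measures of $\mu$ along $W^{[\lambda]}$ are Haar on the $U_\lambda$-orbits. This yields $U_\lambda$-invariance of $\mu$, hence $G_\lambda$-invariance.

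The main obstacle is setting up the nonuniform entropy machinery rigorously: one needs the existence and measurability of leafwise measures along the coarse Lyapunov foliations of the $A$-action, the decomposition of $h_\mu(s)$ into the partial entropies $h^{[\beta]}_\mu(s)$, the leafwise factor lower bound $h^{[\lambda]}_\mu(s) \geq h^{[\lambda]}_{\mathrm{Haar}}(s)$, and --- most delicately --- the characterization of the equality case of the Ledrappier--Young inequality (maximal partial entropy $\iff$ Haar conditionals). These are precisely the tools developed in the nonuniformly hyperbolic setting by Ledrappier--Young and, in the higher-rank abelian form, by Kalinin--Katok--Rodriguez Hertz, on which \cite{BRHW} builds; the proof of this proposition consists in assembling them for the suspension $M^{\alpha}$.
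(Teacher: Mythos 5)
The paper does not prove this proposition: it is quoted verbatim from \cite{BRHW} (Proposition 5.1 there), so there is no internal proof to compare against. Your sketch is, however, a faithful reconstruction of the strategy actually used in the cited source: identify the coarse Lyapunov foliation $W^{[\lambda]}$, observe that non-resonance makes it purely horizontal and hence equal to the orbit foliation of the coarse root group, and then squeeze the partial entropy of $\mu$ between the entropy of the Haar measure on the base and the Ledrappier--Young upper bound, invoking the equality case (Ledrappier's invariance principle) to get Haar leafwise conditionals. The one step you should not present as routine is the middle inequality $h^{[\lambda]}_{\mathrm{Haar}}(s) \leq h^{[\lambda]}_{\mu}(s)$: partial entropies subordinate to a foliation do not trivially behave like total entropy under factor maps, and proving that the fiberwise entropy contribution cannot be negative (equivalently, that the projection $\pi$ does not lose entropy along $W^{[\lambda]}$) is one of the genuinely technical points of \cite{BRHW}, not a formality. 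With that caveat, and the nonuniform machinery you already flag as deferred, the outline is correct.
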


Following \cite{BFH}, we note $r(\g) = \min\{ \dim(\g'/\p'), \ \p' \text{ proper parabolic subalgebra of } \g'\}$, where $\g'$ denotes the real split simple algebra of type $\hat{\Sigma}$, where $\hat{\Sigma} = \Sigma$ when $\Sigma$ is reduced, and $\hat{\Sigma} = B_{\ell}$ when $\Sigma = (BC)_{\ell}$. This integer is called the \textit{minimal resonant codimension} of $\g$.

\begin{corollary}[\cite{BFH}]
\label{cor:resonance}
Assume that any finite $A$-invariant, $A$-ergodic measure $\mu$ on $M^{\alpha}$ has at most $r(\g)-1$ vertical Lyapunov functionals. Then, there exists a finite $G$-invariant measure on $M^{\alpha}$.
\end{corollary}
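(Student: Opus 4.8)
The plan is to produce a single $A$-invariant, $A$-ergodic measure on $M^{\alpha}$ that projects to the Haar measure of $G/\Gamma$, and then to use the hypothesis together with Proposition \ref{prop:resonance} to force it to be invariant under every restricted root subgroup $G_{\lambda}$, and hence under all of $G$.

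First I would construct the measure. Since $A \simeq \R^k$ is amenable and $M^{\alpha}$ is compact, the set of $A$-invariant probability measures on $M^{\alpha}$ projecting to Haar on $G/\Gamma$ is non-empty: one starts from any bounded measurable family of fibre probabilities integrated against Haar, averages over the boxes $[-T,T]^k \subset A$, and lets $T \to \infty$; the projection stays equal to Haar because $A$ preserves Haar on $G/\Gamma$, and any weak-$\ast$ limit is $A$-invariant and still projects to Haar. Now, by Moore's ergodicity theorem the $A$-action on $(G/\Gamma,\mathrm{Haar})$ is ergodic (here one uses that $G$ is simple with finite center, $\Gamma$ a lattice, and $A$ non-compact), so Haar is extremal among $A$-invariant probability measures on $G/\Gamma$. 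Taking the $A$-ergodic decomposition of the measure just built and pushing it forward, almost every ergodic component must then project to Haar. I choose such a component $\mu$: it is $A$-invariant, $A$-ergodic, and $\pi_*\mu$ is Haar, so Proposition \ref{prop:resonance} applies to it.

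Next I apply the hypothesis and Proposition \ref{prop:resonance}. By assumption $\mu$ has vertical Lyapunov functionals $\chi_1,\dots,\chi_r$ with $r \leq r(\g)-1$. A restricted root is $\mu$-resonant only if it lies on one of the rays $\R_{>0}\chi_i$; passing to the reduced system $\hat{\Sigma}$ used to define $r(\g)$ (which collapses each pair $\lambda, 2\lambda$ coming from a $(BC)_{\ell}$ factor), each such ray carries at most one root, so there are at most $r \leq r(\g)-1$ resonant roots. By Proposition \ref{prop:resonance}, $\mu$ is $G_{\lambda}$-invariant for every non-resonant $\lambda$. Setting $\Theta = \{\lambda \in \Sigma : \mu \text{ is } G_{\lambda}\text{-invariant}\}$, the complement $\Sigma \setminus \Theta$ consists of resonant roots, so it has fewer than $r(\g)$ elements; moreover $\Theta$ is closed under root addition, since if $\mu$ is invariant under $G_{\lambda}$ and $G_{\nu}$ it is invariant under the group they generate, which contains $G_{\lambda+\nu}$ whenever $\lambda+\nu \in \Sigma$.

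Finally I conclude. The defining property of the minimal resonant codimension $r(\g)$ is precisely that the complement of any proper closed subsystem of $\hat{\Sigma}$ has at least $r(\g)$ elements, the minimum being realized on the nilradical of a maximal parabolic (cf. \cite{BFH}, Example 2.3). Since $|\Sigma \setminus \Theta| < r(\g)$, the closed subsystem $\Theta$ cannot be proper, i.e. $\Theta = \Sigma$, and $\mu$ is $G_{\lambda}$-invariant for every $\lambda$. Together with $A$-invariance, the Lie algebra of $\Stab_G(\mu)$ then contains $\a$ and all root spaces $\g_{\lambda}$; a short computation with the Jacobi identity shows the subalgebra they generate is an ideal of $\g$, hence equals $\g$ by simplicity. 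Therefore $\Stab_G(\mu)$ contains the identity component of $G$, which is $G$ itself, and $\mu$ is the desired finite $G$-invariant measure on $M^{\alpha}$. The main obstacle is the combinatorial input invoked at the start of this last paragraph, namely that fewer than $r(\g)$ resonant roots can never exhaust the complement of a proper parabolic; this is exactly where the precise value of the minimal resonant codimension enters, and I would import it from \cite{BFH}.
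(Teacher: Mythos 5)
Your proposal is essentially a correct reconstruction of the argument the paper delegates entirely to Section 5.5 of \cite{BFH}: the paper's own ``proof'' is a one-line citation, and your chain (amenability of $A$ gives an invariant measure projecting to Haar; ergodic decomposition plus ergodicity of Haar under $A$ gives an $A$-ergodic component still projecting to Haar; at most $r(\g)-1$ resonant rays; Proposition \ref{prop:resonance} gives $G_{\lambda}$-invariance off those rays; minimal resonant codimension forces the stabilizer to be all of $G$) is exactly the intended route. One technical point deserves care: you work with a measure that is only $A$-invariant, and then need both that $\Theta$ is closed under root addition and that a subset of $\Sigma$ with complement smaller than $r(\g)$ must be all of $\Sigma$. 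When restricted root spaces have dimension greater than one, $[\g_{\lambda},\g_{\nu}]$ need not exhaust $\g_{\lambda+\nu}$, so the Lie algebra of $\Stab_G(\mu)$ is not automatically saturated by root spaces from $\a$-invariance alone; the standard fix (used in \cite{BFH} and in Section \ref{s:limit_cases} of this paper) is to take $\mu$ to be $AM_0$-invariant and $AM_0$-ergodic, where $M_0$ is the compact part of the centralizer of $A$, so that Lemma 2.4 of \cite{BFH} applies and the stabilizer algebra has the form $\a \oplus \m \oplus \bigoplus_{\lambda \in S}\g_{\lambda}$, to which Proposition 2.6 of \cite{BFH} can then be applied. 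With that adjustment (and noting, as you do, that the resonance count is taken in the reduced system $\hat{\Sigma}$, since $\lambda$ and $2\lambda$ are simultaneously resonant or not), your argument goes through and coincides with the cited one.
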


\begin{proof}
This is proved in Section 5.5 of \cite{BFH}.
\end{proof}

\section{Bound on the real-rank and further restrictions}
\label{s:bound_on_rank}

In this section, $\Gamma$ still denotes a cocompact lattice in a non-compact simple Lie group $G$ of real-rank at least $2$ and with finite center, and $\Gamma$ is still assumed to have an unbounded conformal action $\alpha : \Gamma \rightarrow \Conf(M,\bar{g})$ on a compact pseudo-Riemannian manifold $(M,\bar{g})$ of signature $(p,q)$, with $p \leq q$. 

\subsection{Upper bound on the real-rank}
\label{ss:bound_rank}

We have all the ingredients to obtain the announced bound on the real-rank, that is $\Rk_{\R}G \leq p+1$.

Let $\pi : M^{\alpha} \rightarrow G / \Gamma$ be the suspension of the action. Let $A < G$ be a Cartan subspace, $B<G$ be a Borel subgroup containing $A$ and let $\nu$ be a $B$-invariant measure on $M^{\alpha}$, which exists by amenability of $B$. Then, $\pi_* \nu$ is a $B$-invariant measure on $G/\Gamma$, thus it must be $G$-invariant (this follows for instance from the unique ergodicity of the action of the horospherical subgroup of $B$ on $G/\Gamma$), \textit{i.e.} proportional to the Haar measure. Let now $\mu$ be any $A$-ergodic component of $\nu$. Since the action of $A$ on $G/\Gamma$ is ergodic with respect to the Haar measure, it follows that $\pi_* \mu$ is also proportional to the Haar measure.

Let $\chi_1,\ldots,\chi_r \in \a^*$ be the vertical Lyapunov exponents of $A$ with respect to $\mu$. By Proposition \ref{prop:lyapunov_exponents}, we know that they span a subspace of $\a^*$ of dimension at most $p+1$. Thus, if $\Rk_{\R} G$ was greater than $p+1$, then Corollary \ref{cor:span} would imply that $\g$ embeds into $\so(p,q)$, which is obviously false since $\Rk_{\R} \g > \Rk_{\R} \so(p,q)$.

\subsection{Optimal index for exceptional Lie groups}
\label{ss:optimal_index}

Let us observe what could be derived from Proposition \ref{prop:resonance} and Corollary \ref{cor:resonance} in our situation. Let us assume that the index $p = \min(p,q)$ is \textit{optimal} for $\Gamma$, \textit{i.e.} that for all compact pseudo-Riemannian manifolds $(N,\bar{h})$, of signature $(p',q')$ such that $\min(p',q') < p$, any conformal action $\beta : \Gamma \rightarrow \Conf(N,\bar{h})$ has bounded image.

The first consequence is that $\g$ does not embed into $\so(p,q)$ because if it did, $\Gamma$ would have an unbounded action on $\Ein^{p-1,q-1}$ whose conformal group is $\PO(p,q)$. By Proposition \ref{prop:lyapunov_exponents}, for any $A$-invariant, $A$-ergodic measure $\mu$ on $M^{\alpha}$, there are at most $2p+1$ vertical Lyapunov functionals when $p<q$, and at most $2p$ when $p=q$. Thus, from Corollary \ref{cor:resonance} and Proposition \ref{prop:no_finite_invariant_measure}, we deduce that $r(\g) \leq 2p+1$. Thus, using the explicit values of $r(\g)$ given in \cite{BFH} (Example 2.3 and Appendix A.), we derive the following lower bounds for $p$. Let $\ell = \Rk_{\R}G$. 

\begin{itemize}
\item If $\Sigma = A_{\ell}$, then $r(\g) = \ell$ and we obtain $\ell \leq 2p+1$.
\item If $\Sigma = B_{\ell}, C_{\ell}, (BC)_{\ell}$, then $r(\g) = 2\ell -1$, and we get $\ell \leq p+1$.
\item If $\Sigma = D_{\ell}$, then $r(\g) = 2\ell -2$, and we get $\ell \leq p+1$.
\item If $\Sigma = E_6$, then $r(\g) = 16$, and we get $p \geq 8$.
\item If $\Sigma = E_7$, then $r(\g) = 27$, and we get $p \geq 13$.
\item If $\Sigma = E_8$, then $r(\g) = 57$, and we get $p \geq 28$.
\item If $\Sigma = F_4$, then $r(\g) = 15$, and we get $p \geq 7$.
\item If $\Sigma = G_2$, then $r(\g) = 5$, and we get $p \geq 2$.
\end{itemize}

Therefore, in all non-exceptional cases, we obtain either the same inequality as in Section \ref{ss:bound_rank}, or a worse one in the case of $A_{\ell}$. However, this immediate consequence makes no use of the linear relations satisfied by the $\chi_i$'s. We can derive better conclusions by considering the configuration of the Lyapunov functionals in the equality case $r(\g) = 2p+1$ for each exceptional restricted root-system. Unfortunately, it does not give better conclusions for non-exceptional root-system neither. This is a bit technical and postponed in Section \ref{s:limit_cases}, where the proof of Theorem \ref{thm:exceptional} will be completed.

\section{Conformal flatness in maximal real-rank}
\label{s:conformal_flatness}

In this section, we prove the geometric part of our main theorem. We fix a signature $(p,q)$, with $p+q \geq 3$ and $p \leq q$, a non-compact simple Lie group $G$ of real-rank $p+1$ and with finite center, and a cocompact lattice $\Gamma < G$. We assume that we are given a conformal action $\alpha : \Gamma \rightarrow \Conf(M,\bar{g})$ on a compact pseudo-Riemannian manifold $(M,\bar{g})$ of signature $(p,q)$ such that $\alpha(\Gamma)$ is unbounded, and we will prove that $(M,\bar{g})$ is conformally flat.

\subsection{Organization of the proof}

The starting point is that $\g$ does not embed in $\so(p,q)$ because of the real-ranks. Thus, Corollary \ref{cor:span} implies that for any Cartan subspace $A < G$ and any finite $A$-invariant, $A$-ergodic measure $\mu$ on the suspension $M^{\alpha}$, which projects to the Haar measure of $G/\Gamma$, the vertical Lyapunov exponents $\chi_1,\ldots,\chi_r$ linearly span $\a^*$. In Section \ref{ss:uniform_spectrum}, we deduce from the linear relations satisfied by the $\chi_i$'s that there exists a unique $X \in \a$ such that $\chi_1(X)=\cdots=\chi_r(X)=-1$.

A guiding principle in conformal geometry is that when there exists a sequence of conformal maps $(f_k)$ collapsing an open set to a singular set, say a point or a segment, then we can derive interesting conclusions on the conformal curvature by using conformally invariant tensors. For instance, in Lorentzian signature, if a sequence of conformal maps contracts topologically an open set to a point, then this open set is conformally flat, see \cite{frances_degenerescence}, Théorème 1.3. However, in general signature this is not true and we need some notion of ``uniformity of contraction'' to derive conformal flatness.

Here, the existence of an $\R$-split element $X \in \g$ with a uniform Lyapunov spectrum on $M^{\alpha}$ indicates that uniform contractions might be observed in the dynamics of $\Gamma$ on $M$. Using local stable manifolds of the flow of $X$ in $M^{\alpha}$, we will obtain the following in Section \ref{ss:continuous_to_discrete}. A Riemannian norm on $M$ is fixed $\|.\|$, and balls refer to its length distance.

\begin{proposition}
\label{prop:continuous_to_discrete}
Let $X \in \a$ and $\mu$ be a finite $\phi_X^t$-invariant, $\phi_X^t$-ergodic measure on $M^{\alpha}$ admitting exactly one vertical Lyapunov exponent, which is non-zero, and let $(\lambda_k) \rightarrow 0$ be a decreasing sequence. Then, there exist $x \in M$ and $g \in G$ such that $[(g,x)] \in \Supp \mu$, a sequence $(\gamma_k)$ in $\Gamma$, an increasing sequence of positive numbers $(T_k) \rightarrow \infty$ and $r>0$ such that:
\begin{enumerate}
\item $\gamma_k B(x,r) \subset B(x,r)$ for all $k$,
\item $\gamma_k : B(x,r) \rightarrow B(x,r)$ is $\lambda_k$-Lipschitz for all $k$,
\item $\gamma_k.x \rightarrow x$,
\item For all $v \in T_xM \setminus \{0\}$, $\frac{1}{T_k} \log\|D_x \gamma_k. v\| \rightarrow -1$.
\item $\frac{1}{T_k} \log |\det \Jac_x \gamma_k| \rightarrow -n$.
\end{enumerate}
\end{proposition}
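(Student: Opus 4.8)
The plan is to translate the asymptotic, measure-theoretic information about the vertical Lyapunov spectrum of the flow $\phi_X^t$ into concrete, uniform contraction estimates for a sequence of return maps, and then reinterpret these return maps as the action of a diverging sequence $(\gamma_k)$ in $\Gamma$. The key structural input is the theory of local stable manifolds (Pesin theory) for the non-uniformly hyperbolic flow $\phi_X^t$ on the compact space $M^\alpha$. Since $\mu$ has a single vertical Lyapunov exponent, equal to some $-a < 0$ (after normalising $X$ so that $a = 1$), the entire vertical bundle $F^\alpha$ is exponentially contracted along the flow direction $X$ over a set of full $\mu$-measure. I would first invoke the stable manifold theorem to produce, at $\mu$-almost every point $z^\alpha = [(g,x)] \in M^\alpha$, a local stable manifold tangent to $F^\alpha(z^\alpha)$ on which $\phi_X^t$ contracts at the uniform rate $-1$.

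The second step is to exploit the fibered structure. Because the stable manifold at $z^\alpha$ is tangent to the vertical bundle, it is contained in (an open piece of) the fiber $\psi_g(M)$, and so corresponds, via the parametrisation $\psi_g$, to an open set $B(x,r) \subset M$. The flow $\phi_X^t$ acts by translating $g \mapsto e^{tX}g$ in the $G$-direction; writing $e^{T_k X} g = g' \gamma_k$ with $g' \in G$ staying in a fixed compact fundamental domain (possible since $G/\Gamma$ is compact) and $\gamma_k \in \Gamma$, the return of the flow to a neighbourhood of the fiber $\psi_g(M)$ at a sequence of times $T_k \to \infty$ is realised, in the fiber, precisely by the conformal maps $\alpha(\gamma_k)$. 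The uniform contraction of the flow on the stable manifold then transfers to a uniform contraction of $\gamma_k$ on $B(x,r)$: I would choose the times $T_k$ so that the contraction factor falls below the prescribed $\lambda_k$, yielding (1) and (2), and, after passing to a subsequence so the return basepoints converge, (3). The differential statements (4) and (5) are read off directly from the single-exponent hypothesis: every vector in $F^\alpha(z^\alpha)$ has Lyapunov exponent $-1$, so $\frac{1}{T_k}\log\|D_x\gamma_k.v\| \to -1$ uniformly in $v$, and the determinant exponent is the sum over the $n$-dimensional fiber, giving $-n$.

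The subtle point, and the one deserving most care, is controlling the discrepancy between the \emph{flow return map} and the \emph{genuine map} $\alpha(\gamma_k)$. The flow does not return exactly to the same fiber, so one must absorb the bounded ``drift'' by the compact piece $g'$ into the estimates; since this drift ranges over a compact set of $G$, its differential contributes only a bounded multiplicative factor, negligible against the exponential contraction as $T_k \to \infty$. One must also ensure that the local stable manifold has definite size $r > 0$ along the chosen subsequence — this is where non-uniformity of Pesin theory bites, but it is handled by restricting to a positive-measure Pesin block on which the stable manifold size is bounded below, and choosing the basepoint $z^\alpha$ in the support of $\mu$ inside this block. Finally, the self-mapping property $\gamma_k B(x,r) \subset B(x,r)$ in (1) follows from the invariance of the local stable manifold under the forward flow, provided $r$ is taken small enough and $T_k$ large enough; combined with the $\lambda_k$-Lipschitz bound this makes $B(x,r)$ a uniformly contracted region, which is exactly the ``uniformity of contraction'' needed downstream to force conformal flatness.
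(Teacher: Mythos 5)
Your overall strategy is the same as the paper's (Pesin local stable manifolds for $\phi_X^t$, return times $T_k$ read off in a trivialization tube, the returns realized by elements $\gamma_k\in\Gamma$), but there is a genuine gap at the pivotal step where you pass from the stable manifold to the fiber. You assert that the local stable manifold at $z^{\alpha}=[(g,x)]$ is tangent to the vertical bundle $F^{\alpha}$ and is therefore contained in the fiber $\psi_g(M)$. This is false: the strong stable distribution of $\phi_X^t$ at $z^{\alpha}$ is $F^{\alpha}(z^{\alpha})\oplus\bigoplus_{\xi(X)<0}\g_{\xi}(z^{\alpha})$, because the horizontal directions $\g_{\xi}$ carry the Lyapunov exponents $\xi(X)$ of the right translation flow on $G/\Gamma$, and since $X\neq 0$ and roots come in opposite pairs, some of these are negative. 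So $W_s^{\mathrm{loc}}(z^{\alpha})$ has dimension $\dim M+\dim G_X^-$ and is transverse to, not contained in, the fiber. What your argument actually needs is the \emph{reverse} containment — that $W_s^{\mathrm{loc}}(z^{\alpha})$ contains an open neighborhood of $z^{\alpha}$ inside the fiber $\pi^{-1}(g\Gamma)$ — and this requires a proof: the paper shows that $\pi(W_s^{\mathrm{loc}}(z^{\alpha}))$ lies in a single leaf $\mathcal{L}$ of the local $G_X^-$-foliation of $G/\Gamma$, so that $W_s^{\mathrm{loc}}(z^{\alpha})$ sits inside the submanifold $\pi^{-1}(\mathcal{L})$ of the same dimension and is therefore open in it, whence it contains a fiber neighborhood. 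Without some such argument your transfer of the contraction estimate from the flow to the maps $\gamma_k$ acting on $B(x,r)\subset M$ is unjustified.

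A second, smaller point: to obtain conclusion (3) ($\gamma_k.x\to x$, with the \emph{same} $x$) and the self-mapping property (1), it is not enough to pass to a subsequence along which the return basepoints converge to some limit; you need the basepoint $z^{\alpha}$ to be recurrent for $\phi_X^t$, so that $\phi_X^{T_k}(z^{\alpha})$ returns within $r/(k+1)$ of $z^{\alpha}$ itself. Poincaré recurrence supplies this for $\mu$-almost every point, and then (1) follows from the triangle inequality $d(x,\gamma_k.y)\leq d(x,\gamma_k.x)+d(\gamma_k.x,\gamma_k.y)<r/(k+1)+r/2<r$, combining recurrence with the contraction on the stable manifold; forward invariance of the stable lamination alone does not give a self-map of a fixed ball. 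Your treatment of the bounded drift $g'$ in the compact fundamental domain, and your derivation of (4) and (5) from the single-exponent hypothesis, are consistent with the paper's argument.
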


For the last point, $\Jac_x \gamma_k \in \GL(n,\R)$ is the Jacobian matrix of $D_x \gamma_k$ with respect to a given measurable bounded frame field on $B(x,r)$, \textit{i.e.} a measurable section of the frame bundle of $B(x,r)$ whose image is contained in a compact subset of the bundle. Any change of this bounded frame field will not modify point (5) in the Proposition.

The next step makes a crucial use of the rigidity of the conformal structure of $(M,\bar{g})$. Using results of Frances (\cite{frances_degenerescence}) on degeneracy of conformal maps, we will prove that the derivatives of the sequence $(\gamma_k)$ obtained in Proposition \ref{prop:continuous_to_discrete} have the same exponential growth \textit{at any point} in $B(x,r)$. It will directly follow from the proposition below, proved in Section \ref{ss:stability}.

\begin{proposition}
\label{prop:stability}
Let $x \in M$, $U$ be a connected neighborhood of $x$ and $(f_k) \in \Conf(M,\bar{g})$ be a sequence such that:
\begin{enumerate}
\item any $y \in U$ admits a neighborhood $V$ such that $f_k(\bar{V}) \rightarrow \{x\}$ for the Hausdorff topology,
\item there exists $x_0 \in U$ such that for all $v \in T_{x_0}M \setminus \{0\}$, $\frac{1}{T_k} \log \|D_{x_0}f_k v\| \rightarrow -1$ and $\frac{1}{T_k} \log |\det \Jac_{x_0} f_k| \rightarrow -n$.
\end{enumerate}
Then, for all $y \in U$ and $v \in T_yM \setminus \{0\}$, $\frac{1}{T_k} \log \|D_y f_k v \| \rightarrow -1$.
\end{proposition}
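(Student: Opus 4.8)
Proposition \ref{prop:stability} asserts that a uniform contraction rate observed at a single point $x_0$ propagates to the whole connected neighborhood $U$, provided the sequence $(f_k)$ collapses every point of $U$ onto $\{x\}$. Let me sketch the strategy.

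The plan is to exploit the rigidity of the conformal structure through conformally invariant tensors, following the degeneracy results of Frances. First I would recall the mechanism behind \cite{frances_degenerescence}: when a sequence of conformal maps $(f_k)$ contracts a neighborhood $V$ of a point $y$ topologically to a single point $x$ (hypothesis (1)), the conformal structure places strong constraints on the differentials $D_yf_k$. The point is that the derivative cocycle of a conformal map factors through the group $\R_{>0}\times\O(p,q)$, so the matrix $\Jac_y^\sigma(f_k)$ (in a fixed frame field) decomposes as a conformal factor times a pseudo-orthogonal part. Thus the quantity $\frac{1}{T_k}\log\|D_yf_k v\|$ is governed by the logarithm of the conformal distortion $\frac{1}{2}\log\lambda(f_k,y)$ together with a bounded pseudo-orthogonal contribution that, however, need not be bounded in the Riemannian norm because $\O(p,q)$ is noncompact. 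This noncompactness is precisely the source of the difficulty in general signature.

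The key step is to extract a single scalar asymptotic rate that must be \emph{constant} over $U$. I would consider the conformal distortion cocycle $\lambda(f_k,\cdot):U\to\R_{>0}$ and study $\beta_k(y):=\frac{1}{T_k}\log\lambda(f_k,y)$. The hypotheses at $x_0$ say that the \emph{smallest} and \emph{largest} singular values of $D_{x_0}f_k$ have the same exponential rate $-1$ (from point (2), every $v$ gives rate $-1$), which forces the pseudo-orthogonal part to be asymptotically bounded in rate at $x_0$ and pins $\beta_k(x_0)\to -1$; together with $\frac{1}{T_k}\log|\det\Jac_{x_0}f_k|\to -n$, which equals $\frac{n}{2}\beta_k(x_0)$ up to a frame-dependent bounded term, this is consistent and rigid. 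The heart of the argument is then to show that $\beta_k(y)\to -1$ for every $y\in U$, and moreover that the pseudo-orthogonal part contributes no exponential stretching at any $y$. For the first, I would invoke the collapsing hypothesis (1) at $y$: the Frances stability theorem (\cite{frances_degenerescence}) guarantees that a contracting sequence has a uniform behaviour of the conformal distortion on the domain of contraction, so $\beta_k$ cannot have different exponential rates at nearby points — here connectedness of $U$ and a bootstrap/open-closed argument let the value of the limit rate propagate from $x_0$ to all of $U$.

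The main obstacle, which I expect to consume most of the work, is ruling out exponential stretching coming from the $\O(p,q)$-part at a general point $y\ne x_0$: even if the conformal distortion rate $\beta_k(y)$ equals $-1$, a vector $v\in T_yM$ could a priori be stretched by a pseudo-orthogonal transformation whose norm grows exponentially in $T_k$. To control this I would use that the contraction of $\bar V$ to the \emph{single} point $x$ forces the image frames to degenerate in a one-sided way: the contracting behaviour on a whole neighborhood, combined with the conformal invariance, prevents any direction from escaping to rate $+\infty$, because otherwise a conformally invariant tensor (e.g. a component of the Weyl curvature, or the invariant built from the pair of differentials) would blow up, contradicting its boundedness on the compact manifold $M$. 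Concretely, I would argue that if some $v\in T_yM$ had $\frac{1}{T_k}\log\|D_yf_kv\|\to c>-1$, then comparing the rates at $y$ and $x_0$ along a path in $U$ and using the rigidity of the $\O(p,q)$-valued part (its holonomy-type constraints forced by the collapsing) yields a contradiction with hypothesis (1). The upshot is that every direction at every $y\in U$ must share the uniform rate $-1$, which is exactly the conclusion. I would finish by noting that all frame-field choices are bounded, so none of these rates depends on the auxiliary measurable frame, exactly as in the frame-independence remark following Proposition \ref{prop:continuous_to_discrete}.
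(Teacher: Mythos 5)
There is a genuine gap in your proposal, concentrated in the step you yourself identify as ``the main obstacle'': ruling out exponential stretching from the $\O(p,q)$-part at a point $y\neq x_0$. The mechanism you propose --- that a stretched direction would make a conformally invariant tensor such as the Weyl curvature ``blow up,'' contradicting its boundedness on the compact $M$ --- cannot work. Conformal invariance of $W$ only yields the upper bound $\|(f_k)_*W_y(u,v,w)\|\leq C\,\|(f_k)_*u\|\,\|(f_k)_*v\|\,\|(f_k)_*w\|$, and if some direction is stretched this inequality is satisfied trivially; there is no lower bound available to produce a contradiction. (In the paper this type of estimate is used in the opposite direction, in Corollary \ref{cor:local_flatness}: \emph{after} uniform contraction is established at every point, the upper bound forces $W$ to vanish. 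It cannot be used to establish the uniform contraction itself.) Your ``open-closed'' propagation of the distortion rate $\beta_k(y)\rightarrow -1$ along $U$ is likewise asserted rather than proved: the hypotheses give you the determinant and norm rates only at $x_0$, and you never exhibit the uniformity statement that would make the set of points with rate $-1$ both open and closed.

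The paper's actual mechanism, which your sketch gestures at but does not implement, runs through the Cartan bundle. Frances's Lemma \ref{lem:frances_stability} produces a \emph{single} sequence $(p_k)$ in $P$ that is a holonomy sequence of $(f_k)$ at \emph{every} point of $U$ (up to bounded perturbations), because $f_k|_U$ converges $\mathcal{C}^0$ to the constant map $x$. Lemma \ref{lem:lyapunov_holonomy} translates Lyapunov regularity of $D_yf_k$ into Lyapunov regularity of $\rho(p_k)$ via the Cartan connection, and the decisive technical point is Lemma \ref{lem:uniform_lyap_regular}: \emph{uniform} Lyapunov regularity (a single exponent) is characterized by the two scalar conditions $\frac{1}{T_k}\log|\det g_k|\rightarrow\chi_{\det}$ and $\frac{1}{T_k}\log\|g_k\|\rightarrow\chi_{\det}/n$, and is therefore invariant under bounded left/right multiplication $g_k\mapsto l_kg_kl_k'$. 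Hypothesis (2) at $x_0$ makes $\rho(p_k)$ uniformly $(T_k)$-Lyapunov regular with exponent $-1$, and since the same $(p_k)$ (up to bounded factors) is a holonomy sequence at every $y\in U$, the exponent $-1$ transfers to every $y$. This handles the conformal factor and the noncompact $\O(p,q)$-part in one stroke; your proposal treats them separately and supplies a valid argument for neither.
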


Using these uniform contractions, we will deduce the following corollary in Section \ref{ss:vanishing_weyl}.

\begin{corollary}
\label{cor:local_flatness}
Let $X \in \a$ and $\mu$ be a $\phi_X^t$-invariant, $\phi_X^t$-ergodic measure on $M^{\alpha}$ admitting exactly one vertical Lyapunov exponent, which is non-zero. Then, there exist $x \in M$ and $g \in G$ such that $[(g,x)] \in \Supp \mu$ and such that $x$ admits a conformally flat neighborhood.
\end{corollary}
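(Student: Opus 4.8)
The plan is to feed the hypotheses of the corollary into Proposition \ref{prop:continuous_to_discrete}, upgrade the resulting pointwise contraction to a uniform one by means of Proposition \ref{prop:stability}, and then force the conformal curvature to vanish through a growth estimate on a conformally invariant tensor.

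First I would fix any decreasing sequence $\lambda_k \to 0$, say $\lambda_k = 1/k$, and apply Proposition \ref{prop:continuous_to_discrete} to the given $X$ and $\mu$. This produces $x \in M$ and $g \in G$ with $[(g,x)] \in \Supp \mu$, a radius $r>0$, times $T_k \to \infty$, and a sequence $\gamma_k \in \Gamma$ acting on $M$ as conformal maps that map $B(x,r)$ into itself, are $\lambda_k$-Lipschitz there, satisfy $\gamma_k.x \to x$, and obey the pointwise spectral data $\frac{1}{T_k}\log\|D_x\gamma_k v\| \to -1$ for every nonzero $v \in T_xM$ together with $\frac{1}{T_k}\log|\det \Jac_x\gamma_k| \to -n$. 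Next I would check the hypotheses of Proposition \ref{prop:stability} with $U = B(x,r)$, $x_0 = x$, $f_k = \gamma_k$. The second hypothesis is exactly the spectral data just recorded. For the first, fix $y \in U$ and a relatively compact neighborhood $V \subset B(x,r)$; then for $z \in \bar V$,
\[
d(\gamma_k z, x) \leq d(\gamma_k z, \gamma_k x) + d(\gamma_k x, x) \leq \lambda_k\, d(z,x) + d(\gamma_k x, x) \leq 2r\lambda_k + d(\gamma_k x, x) \longrightarrow 0
\]
uniformly in $z$, by the Lipschitz bound and $\gamma_k.x \to x$. Hence $\gamma_k(\bar V) \to \{x\}$ in the Hausdorff topology, so Proposition \ref{prop:stability} applies and yields $\frac{1}{T_k}\log\|D_y\gamma_k v\| \to -1$ for every $y \in U$ and every nonzero $v \in T_yM$. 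Equivalently, at each $y \in U$ all singular values of $D_y\gamma_k$ decay at the common exponential rate $-1$, so that $\|D_y\gamma_k\| \asymp e^{-T_k}$ and $\|(D_y\gamma_k)^{-1}\| \asymp e^{T_k}$ to leading exponential order.

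Finally I would derive conformal flatness of $U$ from this uniform contraction, point by point. For $n \geq 4$, let $W$ be the Weyl tensor of $[\bar g]$, regarded as a $(1,3)$-tensor; since the $\gamma_k$ preserve the conformal class we have $\gamma_k^* W = W$, that is $W_y(u,v,w) = (D_y\gamma_k)^{-1}\, W_{\gamma_k y}\big(D_y\gamma_k u,\, D_y\gamma_k v,\, D_y\gamma_k w\big)$. Taking norms, bounding $\|W\|$ uniformly by compactness of $M$, the three covariant slots contribute a factor $\asymp e^{-3T_k}$ while the single contravariant slot contributes $\asymp e^{T_k}$, whence $\|W_y\| \lesssim e^{-2T_k} \to 0$ and $W \equiv 0$ on $U$. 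For $n=3$ the Weyl tensor vanishes identically, and one argues identically with the Cotton tensor, which is conformally invariant as a $(0,3)$-tensor in dimension three: its three covariant slots give $\|C_y\| \lesssim e^{-3T_k} \to 0$. In either case the obstruction to conformal flatness vanishes on $U$, so $U = B(x,r)$ is a conformally flat neighborhood of $x$, which is the assertion.

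The crux is this last estimate, and it is exactly here that Proposition \ref{prop:stability} is indispensable. Only the uniformity of the rate $-1$ across all directions guarantees $\|(D_y\gamma_k)^{-1}\| \asymp e^{T_k}$ rather than something exponentially larger, so that the net exponent $-2T_k$ (resp. $-3T_k$) is genuinely negative. A purely pointwise estimate at $x$, or a non-uniform vertical spectrum, would leave open the existence of a slowly contracting direction whose inverse growth could overwhelm the decay coming from the covariant slots, and the conformal curvature would then fail to vanish; the whole point of passing through the stability statement is to exclude precisely this.
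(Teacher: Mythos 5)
Your proposal follows the paper's proof almost step for step: feed the hypotheses into Proposition \ref{prop:continuous_to_discrete}, upgrade to a uniform contraction on $B(x,r)$ via Proposition \ref{prop:stability} (your explicit verification of the Hausdorff-convergence hypothesis from the Lipschitz bound is correct and is exactly what the paper leaves implicit), and then kill the Weyl tensor for $n\geq 4$ and the Cotton tensor for $n=3$ using conformal invariance and compactness of $M$. The one place where you deviate is the final curvature estimate, and there you introduce a small but real subtlety.

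You write the invariance of $W$ in pulled-back form, $W_y(u,v,w) = (D_y\gamma_k)^{-1}\,W_{\gamma_k y}(D_y\gamma_k u, D_y\gamma_k v, D_y\gamma_k w)$, and you need $\|(D_y\gamma_k)^{-1}\|$ to grow at exponential rate exactly $+1$. You justify this by saying that all singular values of $D_y\gamma_k$ decay at the common rate $-1$. But that does not follow from the literal conclusion of Proposition \ref{prop:stability}, which only asserts $\frac{1}{T_k}\log\|D_y\gamma_k v\|\to -1$ for each \emph{fixed} $v\neq 0$: a sequence of matrices can contract every fixed vector at rate $-1$ while its smallest singular value decays at a strictly faster rate (take singular directions converging subexponentially to a limit direction), in which case $\|(D_y\gamma_k)^{-1}\|$ grows faster than $e^{T_k}$ and your net exponent could fail to be negative. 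The fix is available: the proof of Proposition \ref{prop:stability} actually establishes \emph{uniform $(T_k)$-Lyapunov regularity} at every $y\in U$, which includes $\frac{1}{T_k}\log|\det\Jac_y\gamma_k|\to -n$, and by the argument of Lemma \ref{lem:uniform_lyap_regular} (the inequality $\|g_k^{-1}\|\leq\|g_k\|^{n-1}/|\det g_k|$) this pins the rate of $\|(D_y\gamma_k)^{-1}\|$ at $+1$. So you should either invoke that stronger conclusion explicitly, or do what the paper does and avoid the inverse altogether: apply $(\gamma_k)_*$ to the fixed vector $W_y(u,v,w)$, note that if it were nonzero its image would decay at rate exactly $-1$, and compare with the rate $\leq -3$ bound on the right-hand side to get a contradiction. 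With either repair your argument is complete.
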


Finally, we will conclude in Section \ref{ss:conclusion} that any compact, $\Gamma$-invariant subset of $M$ intersects a conformally flat open set, and conformal flatness of all of $M$ follows easily.

We remind that throughout this section, $G$ is assumed to have real-rank $p+1$, that $\Gamma<G$ is a cocompact lattice, $\alpha : \Gamma \rightarrow \Conf(M,\bar{g})$ is an unbounded conformal action on a compact pseudo-Riemannian manifold of signature $(p,q)$, with $p \leq q$. We fix $A<G$ a Cartan subspace.

\subsection{A direction with uniform vertical Lyapunov spectrum}
\label{ss:uniform_spectrum}

Let $\mu$ be a finite $A$-invariant, $A$-ergodic measure on $M^{\alpha}$ which projects to the Haar measure of $G/\Gamma$. Let $\chi_1,\ldots,\chi_r \in \a^*$ be the vertical Lyapunov functionals of $A$ with respect to $\mu$ and let $\chi \in \a^*$ be the linear form associated to the conformal distortion (see Section \ref{ss:conformal_distortion}). The aim of this section is to prove the following.

\begin{lemma}
\label{lem:unifom_element}
If $p=q$, then $r=2p$ and if $p<q$ then $r=2p+1$. Moreover, there exists a unique $X \in \a$ such that:
\begin{equation*}
\chi_1(X)=\cdots=\chi_r(X)=-1.
\end{equation*}
\end{lemma}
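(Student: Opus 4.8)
The plan is to combine the two kinds of constraints I already have on the Lyapunov functionals: the dimension count coming from $\Rk_\R G = p+1$ together with the spanning property (Corollary \ref{cor:span}), and the explicit linear relations from Proposition \ref{prop:lyapunov_exponents}. First I would recall that since $\Rk_\R G = p+1$ and $\g$ cannot embed into $\so(p,q)$ (by comparison of real-ranks), Corollary \ref{cor:span} tells me the vertical Lyapunov functionals $\chi_1,\ldots,\chi_r$ span $\a^* \simeq \R^{p+1}$. On the other hand, the Remark following Proposition \ref{prop:lyapunov_exponents} says these functionals, together with the distortion functional $\chi$, live in a linear subspace of $\a^*$ of dimension at most $p+1$. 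Since they span the full $(p+1)$-dimensional space $\a^*$, that subspace must be all of $\a^*$, and in particular $\chi$ already lies in the span of the $\chi_i$'s.

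\textbf{Pinning down $r$.}
Next I would determine $r$ precisely. Proposition \ref{prop:lyapunov_exponents} gives the bound $r \le 2p+1$, with the dichotomy that $r$ is even forces $p=q$ (all $E_i$ totally isotropic, split signature), while $r$ odd gives a non-degenerate middle block $E_{(r+1)/2}$. I claim the spanning condition forces $r$ to be maximal. Indeed, the linear relations listed in Proposition \ref{prop:lyapunov_exponents} show that the span of $\chi_1,\ldots,\chi_r$ has dimension at most $\lceil r/2 \rceil$: when $r=2s$ the relations $\chi_i + \chi_{r+1-i} = \chi$ for $1 \le i \le s$ express $\chi_{s+1},\ldots,\chi_r$ in terms of $\chi_1,\ldots,\chi_s$ and the single extra vector $\chi$, and similarly when $r=2s+1$ the relations $\chi_i + \chi_{r+1-i} = 2\chi_{s+1} = \chi$ reduce the independent data to roughly $s+1$ vectors. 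Matching this against $\dim \a^* = p+1$ and the parity constraints forces $r=2p$ when $p=q$ and $r=2p+1$ when $p<q$. (This is the step I expect to require the most care: I must count the genuine linear dependencies correctly and check that the span being exactly $p+1$ is compatible only with the maximal value of $r$ in each parity case, using that $\chi$ is itself in the span.)

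\textbf{Constructing the uniform element $X$.}
Finally I would produce the element $X \in \a$ with $\chi_i(X) = -1$ for all $i$. The clean way is to observe that the relations of Proposition \ref{prop:lyapunov_exponents} say all the pairwise sums $\chi_i + \chi_{r+1-i}$ equal the common value $\chi$ (and in the odd case $2\chi_{s+1} = \chi$ as well). Thus the affine condition ``$\chi_i(X) = -1$ for all $i$'' is consistent: it is equivalent to requiring $\chi(X) = -2$ together with each pair balancing, and I would verify that the linear system $\chi_i(X) = -1$ has a solution by checking its right-hand side is consistent with every linear relation among the $\chi_i$'s. Existence then follows because the only relations are the balanced-sum ones, each of which is respected by the constant value $-1$ (a sum of two $-1$'s gives $\chi(X) = -2$, and $2\cdot(-1) = -2$ in the odd case). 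Uniqueness is then immediate: since the $\chi_i$ span $\a^*$, the linear map $X \mapsto (\chi_1(X),\ldots,\chi_r(X))$ is injective, so at most one $X$ can satisfy the prescribed values. Putting existence and uniqueness together yields the unique $X$ with $\chi_1(X)=\cdots=\chi_r(X)=-1$, completing the lemma.
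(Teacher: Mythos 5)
Your overall strategy is the paper's: combine the spanning of $\a^*$ by the $\chi_i$ (Corollary \ref{cor:span}) with the relations of Proposition \ref{prop:lyapunov_exponents} to force $r$ to be maximal, then use the relations again to produce $X$. The execution differs in places, and there are two points to tighten. First, your stated bound $\lceil r/2\rceil$ on $\dim\Span(\chi_1,\ldots,\chi_r)$ is off by one in the even case: the relations give $\Span(\chi_1,\ldots,\chi_r)=\Span(\chi_1,\ldots,\chi_{\lfloor r/2\rfloor},\chi)$, of dimension at most $\lfloor r/2\rfloor+1$ (as your own parenthetical ``and the single extra vector $\chi$'' indicates); taken literally, $\lceil r/2\rceil\geq p+1$ would force $r\geq 2p+2$ when $r$ is even, contradicting $r\leq 2p+1$ and wrongly excluding the case $p=q$. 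The correct bound yields $\lfloor r/2\rfloor\geq p$, hence $r\geq 2p$, after which $r\leq\dim M=2p$ settles $p=q$ and item (3) of Proposition \ref{prop:lyapunov_exponents} rules out even $r$ when $p<q$. (The paper reaches the same count via the planes $P_i=\Span(\chi_i,\chi_{r+1-i})$, showing each is $2$-dimensional and not contained in the sum of the others.) Second, your construction of $X$ is genuinely different from the paper's: the paper fixes a Euclidean structure on $\a^*$ and takes $X$ spanning the line orthogonal to the $p$ independent directions $\chi_i-\chi/2$, normalized by $\chi(X)=-2$; you instead argue that the system $\chi_i(X)=-1$ is consistent because every linear relation among the $\chi_i$ has coefficients summing to zero. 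That works, but the assertion that ``the only relations are the balanced-sum ones'' needs its one-line justification: the relation space has dimension $r-\dim\Span(\chi_i)=r-(p+1)$, and the balanced-sum relations already supply that many independent relations ($p$ of them when $r=2p+1$, $p-1$ when $r=2p$), so they generate all relations, and each has coefficient sum zero. With these two points filled in your argument is complete; uniqueness from the spanning property is exactly as in the paper.
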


\begin{proof}
Since $\Rk_{\R}\g > \Rk_{\R} \so(p,q)$, $\g$ does not embed into $\so(p,q)$ and Corollary \ref{cor:span} implies that $\chi_1,\ldots,\chi_r$ linearly span $\a^*$.

Assume that the Lyapunov functionals have been indexed such that they satisfy the relations given by Proposition \ref{prop:lyapunov_exponents}. The latter ensures that $r \leq 2p+1$. For all $1 \leq i \leq r/2$, we note $P_i$ the space spanned by $\chi_i$ and $\chi_{r+1-i}$. We note that $\chi \in \bigcap_{1 \leq i \leq r/2} P_i$. 

\begin{sublemma}
$\lfloor \frac{r}{2} \rfloor = p$ and $P_1,\ldots,P_p$ are planes such that for all $i$, $P_i \nsubseteq \sum_{j\neq i} P_j$.
\end{sublemma}

\begin{proof}
For all $1 \leq i,j \leq r/2$, we have $P_j \subset P_i + \R. \chi_j$ because $\chi_{r+1-j} = \chi - \chi_j$. Thus, for all $i \leq r/2$, we get $p+1 = \dim \a^* = \dim \sum_{j \leq r/2} P_j \leq \dim P_i + (\lfloor \frac{r}{2} \rfloor - 1) \leq p + 1$. Therefore, $\dim P_i=2$, $\lfloor \frac{r}{2} \rfloor = p$, and $\chi_i \notin \sum_{j \neq i}P_j$.
\end{proof}

Thus, $r \geq 2p$. So, if $p=q$, then $r=2p$ since $r \leq \dim M$. And if $p < q$, then $r$ is odd by Proposition \ref{prop:lyapunov_exponents}, so $r=2p+1$. We note that for all $i$, $\chi_i$ and $\chi$ are linearly independent.

We identify $\a^*$ with $\a$ with some Euclidean norm so that we are now dealing with a problem in a Euclidean space. For all $i \leq p$, we note that $\chi_i-\chi/2$ and $\chi_{r+1-i}-\chi/2$ are proportional and non-zero. Let $\xi_i$ be a unit vector giving this direction and $H_i = \xi_i^{\perp}$. Because $\chi$ and $\xi_i$ span $P_i$, the $\xi_i$'s are linearly independent. 

Then, $L = \bigcap_{1 \leq i \leq p} H_i = \Span(\xi_1,\ldots,\xi_p)^{\perp}$ is a line, and $\chi \notin L^{\perp}$ because if not $L$ would be orthogonal to $\chi, \xi_1,\ldots,\xi_p$ which span $\a^*$. Thus, the unique $X \in L$ such that $\chi(X)=-2$ is the announced one.
\end{proof}

\subsection{From dynamics in $M^{\alpha}$ to dynamics in $M$}
\label{ss:continuous_to_discrete}

In this section, we assume that there exist $X \in \a$ and a $\phi_X^t$-invariant, $\phi_X^t$-ergodic measure $\mu$ on $M^{\alpha}$ whose vertical Lyapunov spectrum is $\{-1\}$, with multiplicity $n = \dim M$. We let $\Lambda \subset \Supp(\mu)$ denote the set of full measure where the conclusions of Oseledec's Theorem are valid.

\subsubsection{Local stable manifolds}

Horizontally, the Lyapunov spectrum of $e^X \in A$ is simply given by the restricted root-spaces. Let us note $\Sigma \subset \a^*$ the set of restricted roots of $\a$, and $\Sigma_X^-=\{\xi \in \Sigma \ : \ \xi(X)<0\}$. We identify any element $X_0 \in \g$ with the vector field on $M^{\alpha}$ whose flow is $x^{\alpha} \mapsto e^{tX_0}.x^{\alpha}$. Note that its projection on $G/\Gamma$ is the right-invariant vector field $X_0^R$ and for any $g \in G$, $g_* X_0(x^{\alpha}) = (\Ad(g)X_0)(x^{\alpha})$. In particular, $(e^{tX})_* X_{\xi} = e^{t \xi(X)} X_{\xi}$ for any $X_{\xi} \in \g_{\xi}$. Thus, the action on the horizontal distribution being completely known, we get that the full Lyapunov spectrum of $\phi_X^t$ is $\{-1\} \cup \{\xi(X), \ \xi \in \Sigma\}$ and the strong stable distribution in $\Lambda$ is $(F^{\alpha})_{x^{\alpha}} \oplus \sum_{\xi \in \Sigma_X^-} \g_{\xi}(x^{\alpha})$.

We fix $0 < \lambda < 1$ such that $\xi(X) < -\lambda$ for all $\xi \in \Sigma_X^-$. Then, Pesin theory - for instance Theorem 16 of \cite{fathi_herman_yoccoz} - gives us a $\phi_X^t$-invariant set of full $\mu$ measure $\Lambda' \subset \Lambda$ such that for all $x^{\alpha} \in \Lambda'$, there exists a local stable manifold $W_s^{\text{loc}}(x^{\alpha})$ near $x^{\alpha}$. This $W_s^{\text{loc}}(x^{\alpha})$ is an embedded ball containing $x^{\alpha}$ and whose tangent space at $x^{\alpha}$ is the strong stable distribution, and for all $x^{\alpha}$, there exists $C(x^{\alpha}) > 0$ such that for any $y^{\alpha},z^{\alpha} \in W_s^{\text{loc}}(x^{\alpha})$, $d(\phi_X^t(y^{\alpha}),\phi_X^t(z^{\alpha})) \leq C(x^{\alpha})  d(y^{\alpha},z^{\alpha}) e^{-\lambda t}$.

\begin{lemma}
\label{lem:stable_vertical}
For all $x^{\alpha} \in \Lambda'$, projecting on $g\Gamma$, the local stable manifold $W_s^{\text{loc}}(x^{\alpha})$ contains an open neighborhood of $x^{\alpha}$ in the fiber $\pi^{-1}(g\Gamma)$.
\end{lemma}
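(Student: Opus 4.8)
The plan is to show that the local stable manifold $W_s^{\text{loc}}(x^{\alpha})$, which is tangent at $x^{\alpha}$ to the strong stable distribution, actually contains a full-dimensional piece of the fiber. The key observation is that the vertical bundle $(F^{\alpha})_{x^{\alpha}}$ is contained in the strong stable distribution: indeed, by hypothesis the vertical Lyapunov spectrum of $\phi_X^t$ is $\{-1\}$ with multiplicity $n = \dim M$, so every vertical direction is strongly contracted, with exponent $-1 < -\lambda$. Since $W_s^{\text{loc}}(x^{\alpha})$ is an embedded ball of dimension equal to the dimension of the strong stable distribution, and its tangent space at $x^{\alpha}$ is precisely that distribution, the fiber $\pi^{-1}(g\Gamma)$ meets $W_s^{\text{loc}}(x^{\alpha})$ transversally in the sense that their tangent spaces at $x^{\alpha}$ agree along the full vertical direction.

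First I would make precise the inclusion of tangent spaces. At $x^{\alpha}$, the tangent space $T_{x^{\alpha}}M^{\alpha}$ splits $G$-invariantly as $(F^{\alpha})_{x^{\alpha}} \oplus \g(x^{\alpha})$, and the strong stable distribution is $(F^{\alpha})_{x^{\alpha}} \oplus \sum_{\xi \in \Sigma_X^-} \g_{\xi}(x^{\alpha})$. The fiber $\pi^{-1}(g\Gamma)$ is tangent exactly to $(F^{\alpha})_{x^{\alpha}}$. Therefore $T_{x^{\alpha}}\pi^{-1}(g\Gamma) = (F^{\alpha})_{x^{\alpha}}$ is a subspace of $T_{x^{\alpha}}W_s^{\text{loc}}(x^{\alpha})$.

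Next I would argue that the fiber is, locally near $x^{\alpha}$, contained in the stable manifold, and not merely tangent to it. The cleanest route is to use the dynamical characterization of the local stable manifold rather than a purely infinitesimal tangency argument. By $\phi_X^t$-invariance of the fibration $\pi$ and the fact that the flow contracts the fiber uniformly (the vertical exponent is $-1$), any point $y^{\alpha}$ in the fiber sufficiently close to $x^{\alpha}$ satisfies $d(\phi_X^t(y^{\alpha}), \phi_X^t(x^{\alpha})) \to 0$ at the rate governed by the vertical contraction. More carefully: the flow $\phi_X^t$ preserves fibers up to the horizontal $G$-translation, so comparing $\phi_X^t(y^{\alpha})$ with $\phi_X^t(x^{\alpha})$ stays within a single fiber translate, and there the contraction is at exponential rate $-1 < -\lambda$. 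Hence $y^{\alpha}$ lies in the stable set of $x^{\alpha}$ for the flow, and by uniqueness of the local stable manifold through $x^{\alpha}$, a neighborhood of $x^{\alpha}$ in the fiber is contained in $W_s^{\text{loc}}(x^{\alpha})$.

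The main obstacle I anticipate is the bookkeeping in the second step: reconciling the abstract Pesin-theoretic stable manifold (defined via the backward/forward contraction estimate with constant $C(x^{\alpha})$ and exponent $\lambda$) with the concrete vertical contraction coming from the Oseledec splitting. One must be careful that the local stable manifold is only defined $\mu$-almost everywhere and that the constants degenerate along the orbit, so the argument should be phrased pointwise at $x^{\alpha} \in \Lambda'$ using the uniformity of the vertical exponent. Because the vertical exponent equals $-1$ strictly and is uniform (single exponent, full vertical multiplicity), there is no issue of mixing contraction rates inside the fiber, which is what makes the conclusion clean; the delicate point is simply to verify that the fiber direction is genuinely tangent to the stable manifold and that local uniqueness applies, rather than the fiber only osculating it to first order. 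I expect this to follow from the standard graph-transform construction of $W_s^{\text{loc}}$, in which any submanifold tangent to the strong stable subspace and invariantly contracted is forced to coincide with the stable manifold locally.
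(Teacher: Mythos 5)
Your first step (the tangent-space inclusion $(F^{\alpha})_{x^{\alpha}} \subset T_{x^{\alpha}}W_s^{\text{loc}}(x^{\alpha})$) is fine and matches the paper's setup. The gap is in the second step: you assert that any $y^{\alpha}$ in the fiber sufficiently close to $x^{\alpha}$ satisfies $d(\phi_X^t(y^{\alpha}),\phi_X^t(x^{\alpha}))\to 0$ at the rate governed by the vertical contraction, and then invoke local uniqueness of the stable manifold. But the only input available is that the vertical Lyapunov exponent equals $-1$ at $\mu$-almost every point, which is an infinitesimal, asymptotic statement about the derivative cocycle; it does not by itself give contraction of actual distances between two distinct fiber points under the nonlinear flow. (Recall that in the parametrizations $\psi_g$ the flow acts on fibers as the identity of $M$; the ``contraction'' is entirely a statement about how the ambient metric of $M^{\alpha}$ varies along the $A$-orbit, and that is controlled only asymptotically and almost everywhere.) Establishing that a whole neighborhood of $x^{\alpha}$ in the fiber is uniformly exponentially contracted is essentially equivalent to the lemma itself --- it is what Pesin's estimate on $W_s^{\text{loc}}$ gives you \emph{once} you know the fiber neighborhood sits inside $W_s^{\text{loc}}(x^{\alpha})$ --- so as written the argument is circular. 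Your closing appeal to the graph transform (``any submanifold tangent to the strong stable subspace and invariantly contracted is forced to coincide with the stable manifold'') presupposes exactly the unproven property ``invariantly contracted''; tangency at the single point $x^{\alpha}$ is not enough.

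The paper closes this gap by arguing in the opposite direction, with a soft dimension count instead of a contraction estimate. Since $\pi: M^{\alpha}\to G/\Gamma$ is Lipschitz, $\pi(W_s^{\text{loc}}(x^{\alpha}))$ lies in the stable manifold of $g\Gamma$ for $e^{tX}$ acting on $G/\Gamma$, which is the orbit $G_X^-.g\Gamma$ of the horospherical subgroup; locally this is a leaf $\mathcal{L}$ of a foliation, and $\pi^{-1}(\mathcal{L})$ is a closed submanifold of dimension $\dim M + \dim G_X^-$ containing $W_s^{\text{loc}}(x^{\alpha})$. Because the vertical spectrum is $\{-1\}$ with full multiplicity $n$, $W_s^{\text{loc}}(x^{\alpha})$ has exactly this dimension, hence is open in $\pi^{-1}(\mathcal{L})$; since $\pi^{-1}(g\Gamma)\subset\pi^{-1}(\mathcal{L})$, it contains an open neighborhood of $x^{\alpha}$ in the fiber. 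If you want to salvage your route you would need to supply the missing quantitative step, for instance by running the graph-transform construction relative to the smooth invariant vertical foliation; the dimension-count argument avoids this entirely.
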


\begin{proof}
Consider the projection $\pi(W_s^{\text{loc}}(x^{\alpha}))$ in $G / \Gamma$. It is contained in the (future) maximal stable manifold of $g\Gamma$ for the action of $e^{tX}$ on $G/\Gamma$
\begin{equation*}
W_s(g\Gamma) = \{g'\Gamma \ : \ \bar{\text{lim}}_{t \to + \infty} \frac{1}{t} \log 	d(e^{tX}g'\Gamma,e^{tX}g\Gamma) < 0\},
\end{equation*}
because the projection $\pi : M^{\alpha} \rightarrow G/\Gamma$ is a Lipschitz map. If $G_X^-<G$ is the analytic Lie subgroup associated to $\sum_{\xi \in \Sigma_X^-} \g_{\xi}$, then $W_s(g\Gamma) = G_X^-.g\Gamma$.

Shrinking $W_s^{\text{loc}}(x^{\alpha})$ if necessary, there exists a neighborhood $V$ of $\id$ in $G$ such that $\pi(W_s^{\text{loc}}(x^{\alpha})) \subset Vg\Gamma$ and such that in $Vg\Gamma$, every leaf of the foliation defined by the local action of $G_X^-$ is a closed, connected submanifold of $Vg\Gamma$ of dimension $\dim G_X^-$. By connectedness, $\pi(W_s^{\text{loc}}(x^{\alpha}))$ is contained in a single such leaf, say $\mathcal{L}$. Shrinking $V$ and $W_s^{\text{loc}}(x^{\alpha})$ once more if necessary, $\pi^{-1}(\mathcal{L}) \subset M^{\alpha}$ is a closed submanifold, of dimension $\dim M + \dim G_X^-$, which contains $W_s^{\text{loc}}(x^{\alpha})$. The latter having the same dimension, the result follows.
\end{proof}

\subsubsection{Proof of Proposition \ref{prop:continuous_to_discrete}}

Let $(\lambda_k)$ be a decreasing sequence of positive numbers. We exhibit here a sequence $(\gamma_k)$ in $\Gamma$ as claimed in Proposition \ref{prop:continuous_to_discrete}. We still note $\Lambda' \subset \Supp(\mu)$ the set of full measure where local stable manifolds are defined. Since $\mu$-almost every point is recurrent for the flow $\phi_X^t$, we choose a recurrent point $x^{\alpha} \in \Lambda'$. The idea is to consider suitable ``pseudo-return maps'' in a trivialization tube near $x^{\alpha}$.

Shrinking $W_s^{\text{loc}}(x^{\alpha})$ if necessary, we can assume that there is an open ball $D \subset G$ such that the $D\gamma$, $\gamma \in \Gamma$ are pairwise disjoint, and letting $\bar{D}$ denote its projection in $G/\Gamma$, such that $W_s^{\text{loc}}(x^{\alpha}) \subset \pi^{-1}(\bar{D}) =: T_D$. We note $\psi_D : (g,x) \in D \times M \mapsto [(g,x)] \in T_D$, and we let $\pi_M$ denote the map $\text{pr}_2 \circ \psi_D^{-1} : T_D \rightarrow M$ and $\pi_D = \text{pr}_1 \circ \psi_D^{-1} : T_D \rightarrow D$.

We fix $\|.\|_M$ and $\|.\|_G$ Riemannian metrics on $M$ and $G$. Reducing $D$ if necessary, there exists a Riemannian metric $\|.\|$ on $M^{\alpha}$ whose restriction to $T_D$ is the push-forward by $\psi_D$ of the orthogonal sum of $\|.\|_M$ and $\|.\|_G$. All the distances and balls in $M^{\alpha}$, $M$ or $M \times G$ will implicitly refer to these metrics. These choices have of course no importance, the aim is to simplify the notations.

Let $x = \pi_M (x^{\alpha})$ and $g = \pi_D(x^{\alpha}) \in D$. By a general fact of Riemannian geometry, there exists $r_0 > 0$ such that for all $r<r_0$, the ball $B(x^{\alpha},r)$ is strongly geodesically convex, in the sense that for any two points of this ball, there exists a unique minimizing geodesic joining them and contained in the ball. Thus, if $r$ is small enough, $B(x^{\alpha},r) \subset T_D$ and $\psi_D^{-1}$ embeds isometrically $B(x^{\alpha},r)$ into $D \times M$, as metric spaces. This proves that the restrictions of $\pi_M$ and $\pi_D$ to $B(x^{\alpha},r)$ are $1$-Lipschitz. Reducing $r$ once more if necessary, by Lemma \ref{lem:stable_vertical}, we may assume that we also have $B(x^{\alpha},r) \cap \pi^{-1}(g\Gamma) \subset W_s^{\text{loc}}(x^{\alpha})$. 

We fix once and for all such an $r > 0$. By construction, $\psi_D(\{g\} \times B(x,r)) \subset B(x^{\alpha},r) \cap \pi^{-1}(g\Gamma)$, and the map $y \in B(x,r) \mapsto \psi_D(g,y) \in B(x^{\alpha},r)$ is isometric. Now we choose an increasing sequence $(T_k) \rightarrow \infty$ such that:
\begin{itemize}
\item $d(\phi_X^{T_k}(x^{\alpha}),x^{\alpha}) < \frac{r}{k+1}$ for all $k\geq 1$ ;
\item $2r C(x^{\alpha}) e^{-\lambda T_k} < \min(\lambda_k,r/2)$.
\end{itemize}

We have the following observation.

\begin{fact}
Let $T>0$ be such that $e^{TX}g\Gamma \in \bar{D} \subset G/\Gamma$, and let $\gamma \in \Gamma$ be the unique element such that $e^{TX}g \gamma^{-1} \in D$. Then, for all $y^{\alpha} \in \pi^{-1}(g\Gamma)$, we have
\begin{equation}
\label{eq:lien_actions}
\pi_M(e^{TX} y^{\alpha}) = \gamma .\pi_M(y^{\alpha}).
\end{equation}
\end{fact}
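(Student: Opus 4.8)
The plan is to prove \eqref{eq:lien_actions} by unwinding the definitions of the suspension, of the $G$-action, and of the trivialization $\psi_D$, and then tracking a single representative of $y^{\alpha}$ through the flow. Once the correct representatives are chosen, the whole statement reduces to bookkeeping with the equivalence relation defining $M^{\alpha}$.

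First I would set $z := \pi_M(y^{\alpha})$ and show that $y^{\alpha} = [(g,z)]$. Since $\pi(y^{\alpha}) = g\Gamma$ with $g \in D$, the coset $g\Gamma$ lies in $\bar{D}$, so $y^{\alpha} \in \pi^{-1}(\bar{D}) = T_D$; and because the translates $D\gamma$ ($\gamma \in \Gamma$) are pairwise disjoint, $g$ is the unique element of $D$ lying in the coset $g\Gamma$. Hence $\pi_D(y^{\alpha}) = g$ and $y^{\alpha} = \psi_D(g,z) = [(g,z)]$.

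Next, applying the definition of the $G$-action, $h.[(g_0,x)] = [(hg_0,x)]$, I obtain $e^{TX}.y^{\alpha} = [(e^{TX}g, z)]$. By hypothesis $e^{TX}g\Gamma \in \bar{D}$, so this point again lies in $T_D$ and $\pi_M(e^{TX}.y^{\alpha})$ is well defined: it is the $M$-coordinate of the unique representative of $e^{TX}.y^{\alpha}$ whose $G$-coordinate belongs to $D$. The key step is then to rewrite $[(e^{TX}g,z)]$ using the defining equivalence $\gamma'.(g_0,x)=(g_0\gamma',(\gamma')^{-1}.x)$ with $\gamma' = \gamma^{-1}$, which gives $[(e^{TX}g, z)] = [(e^{TX}g\gamma^{-1}, \gamma.z)]$. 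Since $e^{TX}g\gamma^{-1} \in D$ by the definition of $\gamma$, this is exactly the distinguished representative in $D \times M$, so that $\pi_M(e^{TX}.y^{\alpha}) = \gamma.z = \gamma.\pi_M(y^{\alpha})$, which is \eqref{eq:lien_actions}.

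I expect no real difficulty beyond the convention bookkeeping: the one point requiring care is that the element $\gamma$ singled out by $e^{TX}g\gamma^{-1} \in D$ acts on the $M$-factor through $\gamma$ itself and not through $\gamma^{-1}$, a consequence of the inversion built into the $\Gamma$-action $\gamma'.(g_0,x)=(g_0\gamma',(\gamma')^{-1}.x)$; an error here would flip the conclusion.
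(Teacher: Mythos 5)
Your proof is correct and follows essentially the same route as the paper: identify $y^{\alpha}=[(g,\pi_M(y^{\alpha}))]$, apply the $G$-action to get $[(e^{TX}g,z)]$, and use the defining equivalence $\gamma'.(g_0,x)=(g_0\gamma',(\gamma')^{-1}.x)$ with $\gamma'=\gamma^{-1}$ to land on the distinguished representative in $D\times M$. Your explicit check that the inversion in the $\Gamma$-action makes the $M$-factor transform by $\gamma$ rather than $\gamma^{-1}$ is exactly the point the paper's one-line computation relies on.
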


\begin{proof}
If $y=\pi_M(y^{\alpha})$, then $y^{\alpha} = \psi_D(g,y) = [(g,y)]$, and $e^{TX}y^{\alpha} = [(e^{TX}g,y)] = [(e^{TX}g\gamma^{-1},\gamma.y)]$, proving the claim.
\end{proof}

For any $k \geq 1$, $\phi_X^{T_k}(x^{\alpha}) \in B(x^{\alpha},r) \subset T_D$, thus $e^{T_kX}g\Gamma \in \bar{D}$. Let $\gamma_k$ be the unique element of $\Gamma$ such that $e^{T_kX}g \gamma_k^{-1} \in D$. Let us see that these choices are convenient.

\begin{enumerate}
\item Let $y \in B(x,r)$. Then, $y^{\alpha} := \psi_D(g,y) \in W_s^{\text{loc}}(x^{\alpha}) \cap B(x^{\alpha},r)$. By the choice of $T_k$, we get that $d(\phi_X^{T_k}(x^{\alpha}),\phi_X^{T_k}(y^{\alpha})) < r/2$. By the previous observation, $\pi_M(\phi_X^{T_k}(x^{\alpha})) = \gamma_k.x$ and $\pi_M(\phi_X^{T_k}(y^{\alpha})) = \gamma_k.y$. Thus, since $\pi_M$ is $1$-Lipschitz in restriction to $B(x^{\alpha},r)$, we obtain $d(x,\gamma_k.x) < r/(k+1)$ and $d(\gamma_k.x,\gamma_k.y) < r/2$, proving the first and the third point of Proposition \ref{prop:continuous_to_discrete}.
\item For any $y,z \in B(x,r)$, since $\psi_D(g,y),\psi_D(g,z) \in W_s^{\text{loc}}(x^{\alpha}) \cap B(x^{\alpha},r)$, the same considerations as above give $d(\gamma_k.y,\gamma_k.z) \leq d(\phi_X^{T_k} \psi_D(g,y), \phi_X^{T_k} \psi_D(g,z)) < \lambda_k d(\psi_D(g,y),\psi_D(g,z))$.  Thus, $\gamma_k$ is $\lambda_k$-Lipschitz since $d(\psi_D(g,y),\psi_D(g,z)) = d(y,z)$.

\item The third point has already been observed.

\item If we differentiate the relation (\ref{eq:lien_actions}), we obtain for any vector $v^{\alpha}$ tangent to $\pi^{-1}(g\Gamma)$ and for all $k \geq 1$, $D\pi_M \circ D \phi_X^{T_k} (v^{\alpha}) = D\gamma_k \circ D\pi_M (v^{\alpha})$. Since $D\pi_M$ preserves the Riemannian norm in the vertical direction, this proves the fourth point because $\frac{1}{T} \log \|D_{x^{\alpha}} \phi_X^T v^{\alpha}\| \rightarrow -1$ by assumption on $X$.

\item We choose a bounded measurable frame field on $TM$, that we pullback on the vertical tangent bundle of $T_D$ via $\pi_M$. We can then arbitrarily extend it into a bounded measurable frame field on $F^{\alpha}$. With respect to it, we have by construction $\Jac_{x^{\alpha}} \phi_X^{T_k} = \Jac_{x} \gamma_k$ for all $k \geq 1$, and the result follows.
\end{enumerate}

\subsection{Strong stability of sequences of conformal maps}
\label{ss:stability}

In this section, we establish Proposition \ref{prop:stability}. We start by introducing some tools of conformal geometry.

\subsubsection{Definitions and general results}

We note $P = (\R_{>0} \times O(p,q)) \ltimes \R^n$ and $\pi : B \rightarrow M$ the $P$-principal bundle obtained by the prolongation procedure (we no longer work with the suspension of the action, and forget that $\pi$ used to denote its projection). We call $\pi : B \rightarrow M$ the Cartan bundle associated to $(M,[\bar{g}])$. We remind that any conformal map $f$ of $M$ lifts to a bundle automorphism of $B$, and that the action of $\Conf(M,\bar{g})$ on $B$ is free and proper. Consequently, if a sequence $(f_k) \rightarrow \infty$ is such that $f_k(x) \rightarrow x_{\infty}$ for $x,x_{\infty} \in M$, then $f_k(b) \rightarrow \infty$ for any $b$ in the fiber of $x$. The notion of holonomy sequence quantifies the divergence of $(f_k(b))$ in the fiber direction.

\begin{definition}
Let $(f_k) \in \Conf(M,\bar{g})$ be a sequence of conformal maps and $x,y \in M$ such that $f_k(x) \rightarrow y$. A sequence $(p_k)$ in $P$ is said to be a \textit{holonomy sequence of $(f_k)$ at $x$} if there exists a bounded sequence $(b_k)$ in $\pi^{-1}(x)$ such that $f_k(b_k).p_k^{-1}$ also stays in a bounded domain of $B$.
\end{definition}

The holonomy sequence of $(f_k)$ at $x$ is uniquely defined up to compact perturbations, \textit{i.e.} for any two holonomy sequences $(p_k)$ and $(p_k')$, there exist $(l_k^1),(l_k^2)$ bounded sequences in $P$ such that $p_k' = l_k^1p_kl_k^2$ for all $k$ (we remind that the action of $P$ on $B$ is free and proper).

We will use the following important observation of Frances.

\begin{lemma}[\cite{frances_degenerescence}, Lem. 6.1]
\label{lem:frances_stability}
Assume that a sequence of conformal maps $(f_k)$ of $(M,\bar{g})$ converges for the $\mathcal{C}^0$-topology to a continuous map $f : M \rightarrow M$. Then, there exists a sequence $(p_k)$ which is a holonomy sequence of $(f_k)$ at any point of $M$.
\end{lemma}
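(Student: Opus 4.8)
The plan is to lift everything to the Cartan bundle $\pi : B \to M$ equipped with its normal Cartan connection $\omega : TB \to \so(p+1,q+1)$, for which every conformal map $f_k$ lifts to an automorphism $\hat{f}_k$ of $(B,\omega)$. The decisive feature is that, since $\hat{f}_k$ preserves $\omega$, it commutes with the exponential of $\omega$-constant vector fields: writing $\exp(b,X)$ for the time-one flow from $b \in B$ of the field $\omega^{-1}(X)$, one has $\hat{f}_k(\exp(b,X)) = \exp(\hat{f}_k(b),X)$, together with the $P$-equivariance $\exp(b.p,X) = \exp(b,\Ad(p)X).p$. I would use these two identities to transport a holonomy sequence from a base point to its neighbours.

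First I would fix $x_0 \in M$ and a frame $b_0 \in \pi^{-1}(x_0)$. Since $f_k(x_0) \to f(x_0)$, I choose a convergent (hence bounded) sequence $\beta_k \in \pi^{-1}(f_k(x_0))$ and define $p_k \in P$ by $\hat{f}_k(b_0) = \beta_k p_k$, so that $(p_k)$ is a holonomy sequence at $x_0$. To see that the \emph{same} $(p_k)$ works at a nearby point $y$, I write $y = \pi(\exp(b_0,X_y))$ in normal coordinates for a small $X_y \in \g_{-1}$ and set $b_y = \exp(b_0,X_y)$. The two identities above give $\hat{f}_k(b_y) = \exp(\beta_k,\Ad(p_k)X_y).p_k$, so the renormalised frame is $\hat{f}_k(b_y).p_k^{-1} = \exp(\beta_k,\Ad(p_k)X_y)$. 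Hence $(p_k)$ is a holonomy sequence at $y$ precisely when these renormalised frames stay in a bounded region of $B$.

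The main obstacle is exactly this boundedness, since a priori all graded components of $\Ad(p_k)X_y$ blow up as $p_k \to \infty$ (the $\g_{-1}$-part through the dilation factor, the $\g_0 \oplus \g_{+1}$-part through the unipotent factor, the latter producing a vertical escape in $B$). Here the hypothesis that $f_k$ converges to a genuine continuous map $f$ is essential: it forces the base points $\pi(\exp(\beta_k,\Ad(p_k)X_y)) = f_k(y)$ to converge, and, more importantly, it excludes the expanding and unipotently diverging configurations of $(p_k)$, which are incompatible with $\mathcal{C}^0$-convergence of diffeomorphisms to a single-valued map on an open set. I would first run the computation in the flat model $B = \PO(p+1,q+1)$, where $\exp(\beta,W) = \beta\exp_G(W)$ and one checks, using a Cartan-type decomposition of $p_k$ adapted to $f_k(x_0) \to f(x_0)$, that $\Ad(p_k)X_y$ stays bounded in every graded piece; then I would transfer the estimate to the curved case by a Gronwall argument along the exponential flow, using that the curvature of $\omega$ is bounded on the compact manifold $M$.

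Finally I would globalise by connectedness. The previous step yields that $(p_k)$ is a holonomy sequence at every point of a normal neighbourhood $U_0$ of $x_0$. For arbitrary $z \in M$, I cover a path from $x_0$ to $z$ by finitely many normal neighbourhoods $U_0,\dots,U_m$ with $U_{i-1}\cap U_i \neq \emptyset$, and on each $U_i$ the same construction gives a holonomy sequence $(p_k^{(i)})$ valid throughout $U_i$. At an overlap point, $(p_k^{(i)})$ and $(p_k^{(i-1)})$ are holonomy sequences of $(f_k)$ at the same point, hence coincide up to compact perturbation $p_k^{(i)} = l_k^1 p_k^{(i-1)} l_k^2$ with $(l_k^1),(l_k^2)$ bounded; absorbing $l_k^2$ into the frame via the equivariance $\hat{f}_k(b.p) = \hat{f}_k(b).p$ and $l_k^1$ on the right, one checks that $(p_k^{(0)}) = (p_k)$ remains a holonomy sequence on $U_i$. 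Iterating along the chain produces a single $(p_k)$ that is a holonomy sequence at $z$, and since $z$ is arbitrary, at every point of $M$.
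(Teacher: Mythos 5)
First, a remark on the comparison you ask for: the paper does not prove this statement at all --- it is imported verbatim from Frances (\cite{frances_degenerescence}, Lemma 6.1) and used as a black box --- so your proposal can only be judged against what such a proof actually requires.

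Your framework is the correct one: lifting to the Cartan bundle, the two identities $\hat f_k(\exp(b,X))=\exp(\hat f_k(b),X)$ and $\exp(b.p,X)=\exp(b,\Ad(p)X).p$, the resulting formula $\hat f_k(b_y).p_k^{-1}=\exp(\beta_k,\Ad(p_k)X_y)$, and the globalisation by chains of normal neighbourhoods (using that two holonomy sequences at a common point differ by compact perturbations) are all sound. The genuine gap is at the step you yourself flag as the main obstacle: the boundedness of $\exp(\beta_k,\Ad(p_k)X_y)$. You dispose of it by asserting that $\mathcal{C}^0$-convergence to a continuous map ``excludes the expanding and unipotently diverging configurations of $(p_k)$'', but that assertion \emph{is} the lemma; nothing in your text establishes it. Concretely, in the grading $\g'=\g_{-1}\oplus\g_0\oplus\g_1$ with $P$ generated by $\R_{>0}\times O(p,q)$ and $\exp(\g_1)$, if the $\g_1$-component $Z_k$ of $p_k$ is unbounded then for $X_y\in\g_{-1}$ the element $\Ad(p_k)X_y=X_y'+[Z_k,X_y']+\tfrac12[Z_k,[Z_k,X_y']]+\cdots$ acquires unbounded components in $\g_0\oplus\g_1$ and the renormalised frames escape vertically in $B$. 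Ruling out precisely this, for a sequence $(p_k)$ about which one only knows that it is a holonomy sequence at the single point $x_0$, is the technical core of Frances's argument: it requires a classification of divergent sequences of $P$ up to compact perturbation and a case-by-case contradiction with uniform convergence to a single continuous limit on an open set. That analysis is absent from your sketch. Moreover, your proposed remedy --- verify the estimate in the flat model and transfer it by a Gronwall bound using bounded curvature --- attacks the wrong difficulty: the issue is not that the curved exponential differs from the flat one, but that one must first pin down the asymptotic shape of $(p_k)$ from the dynamical hypothesis, and that step is no easier in the model than in general.
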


\subsubsection{Lyapunov regularity and holonomy sequences}
Let $(T_k) \rightarrow \infty$ be a sequence of positive numbers. We remind the following definition (see for instance \cite{kaimanovich}, Definition in Section 4).

\begin{definition}
Let $(g_k)$ be a sequence of matrices in $\GL(n,\R)$. We say that $(g_k)$ is $(T_k)$-Lyapunov regular if there exist a flag $\R^n=E_r \supsetneq E_{r-1} \supsetneq \cdots \supsetneq E_1 \supsetneq E_0= \{0\}$ and numbers $\chi_1 < \cdots < \chi_r$ such that $\frac{1}{T_k} \log |g_k v| \rightarrow \chi_i$ for all $1 \leq i \leq r$ and $v \in E_i \setminus E_{i-1}$, and $\frac{1}{T_k} \log |\det g_k| \rightarrow \sum_i \chi_i (\dim E_i - \dim E_{i-1})$. We will say that $(g_k)$ is uniformly $(T_k)$-Lyapunov regular when $r=1$.

A sequence $(f_k)$ of diffeomorphisms of $M$ is said to be (resp. uniformly) $(T_k)$-Lyapunov regular at $x$ if in some (equivalently any) bounded measurable frame field, $\Jac_x f_k$ is (resp. uniformly) $(T_k)$-Lyapunov regular.
\end{definition}

The following fact is a consequence of Theorem 4.1 of \cite{kaimanovich} when $T_k = k$. The proof is easily adaptable, but we give an elementary one for this basic situation.

\begin{lemma}
\label{lem:uniform_lyap_regular}
A sequence $(g_k)$ in $\GL(n,\R)$ is uniformly $(T_k)$-Lyapunov regular if and only if the limit $\chi_{\det} := \lim \frac{1}{T_k} \log |\det g_k|$ exists and $\frac{1}{T_k} \log \|g_k\| \rightarrow \frac{\chi_{\det}}{n}$.
\end{lemma}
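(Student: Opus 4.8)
The plan is to reduce everything to the singular values of $g_k$. Write $\sigma_1(g_k) \geq \cdots \geq \sigma_n(g_k) > 0$ for the singular values, so that $\|g_k\| = \sigma_1(g_k)$, $|\det g_k| = \prod_{i=1}^n \sigma_i(g_k)$, and for every $v \in \R^n$ one has the two-sided bound $\sigma_n(g_k)|v| \leq |g_k v| \leq \sigma_1(g_k)|v|$. All the content of the lemma is then encoded in the real sequences $a_i^{(k)} := \frac{1}{T_k}\log \sigma_i(g_k)$, which are ordered $a_1^{(k)} \geq \cdots \geq a_n^{(k)}$ for each $k$.

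For the forward implication, assume $(g_k)$ is uniformly $(T_k)$-Lyapunov regular with exponent $\chi$. By definition $\chi_{\det} = \lim \frac{1}{T_k}\log|\det g_k| = n\chi$ exists, so it remains only to check $\frac{1}{T_k}\log\|g_k\| \to \chi$. The lower bound is immediate: for any fixed unit vector $v$ one has $\|g_k\| \geq |g_k v|$, whence $\liminf \frac{1}{T_k}\log\|g_k\| \geq \chi$. For the upper bound I would fix an orthonormal basis $(e_1,\ldots,e_n)$; any unit vector expands as $v = \sum_i c_i e_i$ with $|c_i| \leq 1$, so $\|g_k\| \leq \sum_i |g_k e_i| \leq n \max_i |g_k e_i|$. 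Taking $\frac{1}{T_k}\log$ and using that each of the finitely many $\frac{1}{T_k}\log|g_k e_i|$ tends to $\chi$ gives $\limsup \frac{1}{T_k}\log\|g_k\| \leq \chi$.

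For the converse, assume $\chi_{\det}$ exists and $\frac{1}{T_k}\log\|g_k\| \to \chi := \chi_{\det}/n$. The goal is $\frac{1}{T_k}\log|g_k v| \to \chi$ for every $v \neq 0$, which by the two-sided bound above follows once every $a_i^{(k)} \to \chi$; the index $i=n$ is what controls the delicate lower estimate $|g_k v| \geq \sigma_n(g_k)|v|$, while the index $i=1$ controls $|g_k v| \leq \|g_k\||v|$. Now $a_1^{(k)} = \frac{1}{T_k}\log\|g_k\| \to \chi$ and $\sum_i a_i^{(k)} = \frac{1}{T_k}\log|\det g_k| \to n\chi$ by hypothesis. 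Since $a_i^{(k)} \leq a_1^{(k)}$, each $\limsup_k a_i^{(k)} \leq \chi$. For the matching $\liminf$ I would use the elementary inequality $a_i^{(k)} = \sum_j a_j^{(k)} - \sum_{j \neq i} a_j^{(k)} \geq \sum_j a_j^{(k)} - (n-1)a_1^{(k)}$, whose right-hand side tends to $n\chi - (n-1)\chi = \chi$. Hence $a_i^{(k)} \to \chi$ for all $i$, and in particular the smallest singular value grows at rate $\chi$.

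The only genuine subtlety is this last step. The operator norm alone controls the \emph{top} exponent $a_1^{(k)}$, and the determinant alone controls only the \emph{sum} $\sum_i a_i^{(k)}$, so neither hypothesis in isolation pins down the smallest singular value $\sigma_n(g_k)$, which is exactly what is needed for the lower bound on $|g_k v|$. The mechanism is that squeezing the $n$ ordered exponents between a common ceiling $\chi$ (from the norm) and a sum forced to equal $n\chi$ (from the determinant) leaves no slack, which is precisely the $\liminf$ computation above; this is where uniform regularity uses both hypotheses at once.
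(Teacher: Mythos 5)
Your proof is correct and follows essentially the same route as the paper: the forward direction is identical (bounding $\|g_k\|$ by $n\max_i|g_ke_i|$ over an orthonormal basis), and your singular-value squeeze in the converse is the same mechanism as the paper's normalization $|\det g_k|=1$ followed by $\|g_k\|^{-1}\leq\|g_k^{-1}\|\leq\|g_k\|^{n-1}$, since $\sigma_n(g_k)=\|g_k^{-1}\|^{-1}$ and your inequality $a_i^{(k)}\geq\sum_j a_j^{(k)}-(n-1)a_1^{(k)}$ is exactly that bound in logarithmic form.
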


\begin{proof}
Replacing $g_k$ by $|\det g_k|^{-1/n}g_k$, we may assume that $|\det g_k|=1$ and the statement is $\frac{1}{T_k}\log |g_k v| \rightarrow 0$ for any $v \neq 0$ if and only if $\frac{1}{T_k} \log \|g_k\| \rightarrow 0$. 

Let us assume that $\frac{1}{T_k}\log |g_k v| \rightarrow 0$ for any $v \neq 0$. If $|v|=1$, then $|g_kv| \leq \|g_k\|$ implies $0 \leq \underline{\lim} \frac{1}{T_k} \log \|g_k\|$. If $(v_1,\ldots,v_n)$ is an orthonomal basis, then $\|g_k\| \leq n \max |g_k v_i|$, implying $\overline{\lim} \frac{1}{T_k} \log \|g_k\| \leq 0$, and then $\frac{1}{T_k} \log \|g_k\| \rightarrow 0$. 

Let us assume now $\frac{1}{T_k} \log \|g_k\| \rightarrow 0$. We get that if $v \neq 0$, then $\overline{\lim} \frac{1}{T_k} \log |g_k v| \leq 0$. Since $|\det g_k| = 1$, we have $\frac{1}{\|g_k\|} \leq \|g_k^{-1}\| \leq \|g_k\|^{n-1}$ and it follows that $\frac{1}{T_k} \log \|g_k^{-1}\| \rightarrow 0$. Therefore, writing $|v| \leq \|g_k^{-1}\| |g_k v|$, we deduce $0 \leq \underline{\lim}\frac{1}{T_k} \log |g_k v|$.
\end{proof}

Consequently, if $(l_k),(l_k')$ are bounded sequences in $\GL(n,\R)$, then $(g_k)$ is uniformly $(T_k)$-Lyapunov regular if and only if $(l_kg_kl_k')$-is uniformly $(T_k)$-Lyapunov regular, and they have the same Lyapunov exponent. Indeed, if $h_k=l_kg_kl_k'$, then $\frac{1}{T_k} \log |\det h_k| \rightarrow \chi_{\det}$ and from $\|h_k\| \leq \|l_k\| \|g_k\| \|l_k'\|$ and $\|g_k\| \leq \|l_k^{-1}\| \|h_k\| \|l_k'^{-1}\|$ we get
\begin{equation*}
\overline{\lim} \frac{1}{T_k} \log \|h_k\| \leq \frac{\chi_{\det}}{n} \leq \underline{\lim} \frac{1}{T_k} \log \|h_k\|.
\end{equation*}

We note $\rho : P \rightarrow \GL(\R^n)$ the linear representation given by the projection on the first factor of $P = (\R_{>0} \times O(p,q)) \ltimes \R^n$. We prove now:

\begin{lemma}
\label{lem:lyapunov_holonomy}
Let $(f_k)$ be a sequence of conformal maps of $(M,\bar{g})$ and $x \in M$ such that $(f_k(x)) \rightarrow x_{\infty}$. The following are equivalent.
\begin{enumerate}
\item $(f_k)$ is Lyapunov regular at $x$, with Lyapunov exponents $\chi_i$ of multiplicity $d_i$.
\item For any $b$ in the fiber of $x$ and any sequence $(p_k)$ in $P$ such that $f_k(b).p_k^{-1} \rightarrow b_{\infty}$, for some $b_{\infty}$ in the fiber of $x_{\infty}$, the sequence $\rho(p_k)$ is Lyapunov regular with Lyapunov exponents $\chi_i$ and multiplicity $d_i$.
\end{enumerate}
\end{lemma}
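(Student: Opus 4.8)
The plan is to identify the Jacobian $\Jac_x f_k$ with the linear part $\rho(p_k)$ of the holonomy up to \emph{bounded two-sided perturbations}, and then transport Lyapunov regularity across this identification. The geometric link is the standard description of the tangent bundle in conformal Cartan geometry. Write $\g = \so(p+1,q+1)$, let $\p\subset\g$ be the Lie algebra of $P$ and $\p^+ = \R^n$ its nilradical. Since $[\p^+,\g]\subseteq\p$, the nilradical acts trivially on $\g/\p\cong\R^n$, so the isotropy representation of $P$ on $\g/\p$ factors through $P/\p^+=\CO(p,q)=\R_{>0}\times O(p,q)$ acting standardly; this is precisely $\rho$. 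Consequently the Cartan connection gives a canonical identification $TM=B\times_\rho\R^n$, so that each $b$ in the fiber of $x$ determines a linear isomorphism $\bar b:\R^n\to T_xM$ satisfying $\overline{b\cdot p}=\bar b\circ\rho(p)$ for all $p\in P$.

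First I would record how a conformal map acts through this identification. Because the lift of a conformal map $f$ preserves the Cartan connection, the automorphism it induces on $TM=B\times_\rho\R^n$ is exactly $Df$; concretely, for $b$ in the fiber of $x$, $D_x f=\overline{f(b)}\circ\bar b^{-1}$ as a map $T_xM\to T_{f(x)}M$, where $f(b)$ denotes the action on $B$. Now given $b$ and $(p_k)$ with $c_k:=f_k(b)\cdot p_k^{-1}\to b_\infty$, one has $f_k(b)=c_k\cdot p_k$, hence $\overline{f_k(b)}=\bar c_k\circ\rho(p_k)$ and
\begin{equation*}
D_x f_k=\bar c_k\circ\rho(p_k)\circ\bar b^{-1}.
\end{equation*}
Fixing a bounded measurable frame field $\sigma$ of $TM$ and expressing the Jacobian in it yields
\begin{equation*}
\Jac_x f_k=( \sigma(f_k(x))^{-1}\bar c_k )\,\rho(p_k)\,( \bar b^{-1}\sigma(x) )=l_k\,\rho(p_k)\,l',
\end{equation*}
with $l'=\bar b^{-1}\sigma(x)$ a fixed element of $\GL(n,\R)$ and $l_k=\sigma(f_k(x))^{-1}\bar c_k$. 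The sequences $(l_k)$ and $(l_k^{-1})$ are bounded: $\bar c_k\to\bar b_\infty$ since $c_k\to b_\infty$ and $b\mapsto\bar b$ is continuous, while $\sigma(f_k(x))$ and its inverse remain in a compact set because $f_k(x)\to x_\infty$ and $\sigma$ is bounded (its mere measurability is harmless here). Thus $\Jac_x f_k$ and $\rho(p_k)$ differ by a bounded two-sided perturbation.

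Finally I would conclude using the general (non-uniform) form of the perturbation invariance recorded after Lemma \ref{lem:uniform_lyap_regular}. By the multiplicative Weyl inequalities, the normalized logarithms $\frac1{T_k}\log s_j$ of the singular values $s_1\geq\cdots\geq s_n$ of $l_k\,\rho(p_k)\,l'$ and of $\rho(p_k)$ have the same limits; since Lyapunov regularity with exponents $\chi_i$ of multiplicity $d_i$ is equivalent to the convergence of these normalized singular values to the $\chi_i$ with multiplicity $d_i$ (see \cite{kaimanovich}, Theorem 4.1), the two sequences are simultaneously Lyapunov regular with identical exponents and multiplicities. This gives the equivalence of (1) and (2); the independence from the choice of $b$ and $(p_k)$ follows because holonomy sequences at $x$ agree up to bounded two-sided perturbation and $\rho$ maps bounded sequences of $P$ to bounded sequences of $\GL(n,\R)$.

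The main obstacle is not the geometry but two linear-algebra bookkeeping points: verifying that $(l_k^{\pm1})$ stay bounded when $\sigma$ is only measurable (handled by boundedness of $\sigma$ together with the convergence $c_k\to b_\infty$), and upgrading the $r=1$ perturbation invariance proved after Lemma \ref{lem:uniform_lyap_regular} to an arbitrary Lyapunov spectrum, for which the singular-value characterization is the clean route.
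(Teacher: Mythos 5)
Your construction is the same as the paper's: the Cartan connection gives, for each $b$ in the fiber of $x$, a linear identification of $T_xM$ with $\g'/\p\cong\R^n$ (your $\bar b$ is the inverse of the paper's $\psi_b$), the lifted action preserves it, and one gets $\Jac_x^{\sigma} f_k = l_k\,\rho(p_k)\,l'$ with $(l_k^{\pm1})$ bounded and $l'$ fixed. Up to that point the argument matches the paper step for step.

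The one genuine defect is the justification of the final transfer. You assert that Lyapunov regularity of a sequence $(g_k)$ with exponents $\chi_i$ and multiplicities $d_i$ is \emph{equivalent} to convergence of the normalized logarithms of its singular values to the $\chi_i$ with those multiplicities. That equivalence is false: take $g_k = D^k R_k$ with $D=\diag(2,1/2)$ and $R_k$ a rotation oscillating between the identity and rotation by $\pi/2$. The singular values are exactly $2^k$ and $2^{-k}$ and the determinant is $\pm1$, so the normalized singular values converge, yet $\frac1k\log|g_kv|$ oscillates for every $v\neq0$ and no Oseledec flag exists. Convergence of singular values controls only the ``Cartan part'' of $g_k$; regularity also requires the angular/flag part to stabilize, which is precisely what a bounded \emph{varying} right factor can destroy. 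So the singular-value route, as you state it, does not prove the transfer. Fortunately the transfer you actually need is true and has an elementary proof exploiting that the right factor $l'$ is a \emph{fixed} matrix: for a bounded left factor, $\bigl|\log\|l_kg_kl'v\| - \log\|g_k(l'v)\|\bigr|\leq\max(\log\|l_k\|,\log\|l_k^{-1}\|)$ is bounded, so after dividing by $T_k$ the sequences $l_kg_kl'$ and $g_k$ are simultaneously regular, with identical exponents and multiplicities and with flags related by $(l')^{-1}$; the determinant condition transfers for the same reason. You should replace the singular-value claim by this direct estimate. The same caveat applies to your closing remark that independence of the holonomy sequence follows from invariance under two-sided bounded perturbations: with a varying right factor this invariance fails for non-uniform regularity (it is only proved, and only true in general, in the uniform case treated after Lemma \ref{lem:uniform_lyap_regular}). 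That remark is, however, unnecessary: statement (2) quantifies over all choices of $b$ and $(p_k)$, and your proof of $(1)\Rightarrow(2)$ already handles each choice separately.
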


\begin{proof}
We note $G' = \PO(p+1,q+1)$ and identify $P$ with the parabolic subgroup of $G'$ fixing a given isotropic line in $\R^{p+1,q+1}$. Then, the representation $\rho$ is the representation $P \rightarrow \GL(\g'/\p)$ induced by the adjoint representation of $G'$. 

We make use of the Cartan connection $\omega \in \Omega^1(B,\g')$ defined by the conformal structure of $M$. For any $b \in B$, following \cite{sharpe}, Chap. 5, Theorem 3.15., we let $\psi_{b} : T_xM \rightarrow \g'/\p$ denote the linear isomorphism defined by $\psi_{b}(v) = \omega_{b}(\hat{v}) \text{ mod.} \p$, for any $\hat{v} \in T_b B$ projecting to $v$. It satisfies the equivariance property $\psi_{b.p} = \rho(p^{-1}) \psi_b$ for any $b \in B$ and $p \in P$. Moreover, for any conformal map $f$, we have $\psi_{f(b)} \circ D_x f = \psi_b$. 

Therefore, if $(p_k)$, $b$, $b_{\infty}$ are as in (2), and if $x_k := f_k(x)$, $b_k := f_k(b)p_k^{-1}$, $x_0=x$ and $b_0=b$, then we have for $k\geq 1$
\begin{equation*}
\psi_{b_k} \circ D_xf_k = \rho(p_k) \psi_b.
\end{equation*}
The map $b \mapsto \psi_b$ from $B$ to the frame bundle of $M$ being continuous, the sequence $\psi_{b_k}$ is a bounded sequence of linear frames. Thus, if $\sigma : M \rightarrow \mathcal{F}(M)$ is a bounded measurable frame field, then there is a bounded sequence $(l_k)$ in $\GL(\g'/\p)$ such that $\sigma(x_k) = l_k.\psi_{b_k}$, and we get that $\Jac_x^{\sigma} (f_k) = l_k . \rho(p_k).l_0^{-1}$. Thus, $\Jac_x^{\sigma}(f_k)$ is Lyapunov regular if and only if $\rho(p_k)$ is Lyapunov regular, with the same exponents and multiplicities.
\end{proof}

\subsubsection{Proof of Proposition \ref{prop:stability}}

Let $(f_k)$ be a sequence in $\Conf(M,\bar{g})$, $x \in M$, a neighborhood $U$ of $x$ such that any $y \in U$ admits a neighborhood $V$ such that $f_k(\bar{V}) \rightarrow \{x\}$, and $x_0 \in U$ such that $\frac{1}{T_k} \log \|D_{x_0}f_k v\| \rightarrow -1$ for all $v \in T_{x_0}M \setminus \{0\}$ and $\frac{1}{T_k} \log |\det \Jac_{x_0} f_k| \rightarrow -n$, \textit{i.e.} $(f_k)$ is uniformly $(T_k)$-Lyapunov regular at $x_0$ with exponent $-1$.

Since $(f_k|_U)$ converges for the $\mathcal{C}^0$-topology to the constant map equal to $x$, Lemma \ref{lem:frances_stability} implies that there exists a sequence $(p_k)$ in $P$ which is a holonomy sequence for any point in $U$. If $b$ is fixed in the fiber of $x$, then for any point $y \in U$, there exist bounded sequences $(l_k),(l_k')$ in $P$ and a point $b'$ in the fiber of $y$ such that $f_k(b').(l_kp_kl_k')^{-1} \rightarrow b$.

Applying Lemma \ref{lem:lyapunov_holonomy} at $x_0$ we obtain that there exist $l_k,l_k'$ such that $\rho(l_kp_kl_k')$ is uniformly $(T_k)$-Lyapunov regular with exponent $-1$. Thus, Lemma \ref{lem:uniform_lyap_regular} implies that $\rho(p_k)$ has the same property. Now, if $y \in U$, and if $m_k,m_k'$ are relatively compact and such that $f_k(b').(m_kp_km_k')^{-1} \rightarrow b$, then $\rho(m_kp_km_k')$ is also uniformly $(T_k)$-Lyapunov regular with exponent $-1$ and Lemma \ref{lem:lyapunov_holonomy} implies that $(f_k)$ is uniformly $(T_k)$-Lyapunov regular at $y$, with exponent $-1$, completing the proof of Proposition \ref{prop:stability}.

\subsection{Local vanishing of the conformal curvature: proof of Corollary \ref{cor:local_flatness}}
\label{ss:vanishing_weyl}

\subsubsection{Weyl and Cotton tensors} A standard way of proving conformal flatness is to prove that a specific conformally-invariant component of the curvature tensor vanishes identically.

Let $W$ be $(3,1)$-Weyl tensor of $(M,\bar{g})$. It is conformally invariant, and when $\dim M \geq 4$, an open subset $U \subset M$ is conformally flat if and only if $W|_U = 0$ (see \cite{besse} Th. 1.159, 1.165). When $\dim M = 3$, the Weyl tensor is always zero. However, there is $(3,0)$-tensor $T$, called the Cotton tensor, which is also conformally invariant and such that an open set $U$ is conformally flat if and only if $T|_U = 0$.

\subsubsection{Proof of Corollary \ref{cor:local_flatness}}

Let $\|.\|$ denote a Riemannian metric on $M$. Let $X \in \a$ and assume that there exists a $\phi_X^t$-invariant, $\phi_X^t$-ergodic measure $\mu$ which admits a non-zero uniform vertical Lyapunov spectrum. Then, let $x \in M$, $g \in G$, $r>0$, $(\gamma_k)$ and $(T_k)$ be as in the conclusions of Proposition \ref{prop:continuous_to_discrete}. Applying Proposition \ref{prop:stability} to $U=B(x,r)$ and $(f_k)=(\gamma_k)$, we obtain that $(\gamma_k)$ is uniformly $(T_k)$-Lyapunov regular with exponent $-1$ at any point in $B(x,r)$, in particular $\frac{1}{T_k} \log \|D_y \gamma_k v\| \rightarrow -1$ for any $y \in B(x,r)$ and $v \in T_yM \setminus \{0\}$.

Let us assume first $\dim M \geq 4$. By compactness of $M$, there is $C>0$ such that for all $y \in M$ and $u,v,w \in T_yM$, $\|W_y(u,v,w)\| \leq C \|u\|\|v\| \|w\|$. Let now $y \in B(x,r)$ and $u,v,w \in T_yM$. The $\gamma_k$-invariance of $W$ means
\begin{equation*}
( \gamma_k )_* W_y(u,v,w) = W_{\gamma_k.y} ((\gamma_k)_*u,(\gamma_k)_*v,(\gamma_k)_*w)
\end{equation*}
If $W_y(u,v,w)$ was non-zero, then we would have $\frac{1}{T_k} \log \|( \gamma_k )_* W_y(u,v,w)\| \rightarrow -1$. But the conformal invariance of $W$ implies
\begin{equation*}
\|( \gamma_k )_* W_y(u,v,w)\| \leq  C \|(\gamma_k)_*u\|\|(\gamma_k)_*v\|\|(\gamma_k)_*w\|.
\end{equation*}
Thus, we would obtain $\bar{\lim} \frac{1}{T_k} \log \|( \gamma_k )_* W_y(u,v,w)\| \leq -3$, contradicting $W_y(u,v,w) \neq 0$. Thus, $W$ vanishes identically on $B(x,r)$. Since $[(g,x)] \in \Supp \mu$ by construction, this finishes the proof in the case $\dim M \geq 4$.

Assume now that $\dim M =3$, \textit{i.e.} $(M,\bar{g})$ is a closed Lorentzian $3$-manifold. 
The argument is essentially the same: the invariance of the Cotton tensor implies that for any $y \in B(x,r)$ and $u,v,w \in T_yM$,
\begin{equation*}
|T_y(u,v,w)| \leq C \|(\gamma_k)_*u\|\|(\gamma_k)_*v\|\|(\gamma_k)_*w\|,
\end{equation*}
for some $C >0$. Thus, we get $T_y(u,v,w)=0$ since the three factors in the right hand side converge to $0$, completing the proof of Corollary \ref{cor:local_flatness}.

\subsection{Conclusion}
\label{ss:conclusion}

We can now conclude that under the assumption $\Rk_{\R}G = p+1$, the whole manifold is conformally flat. It will follow from the

\begin{claim*}
\label{lem:flatness_in_K}
Any compact, $\Gamma$-invariant subset of $M$ intersects a conformally flat open set. 
\end{claim*}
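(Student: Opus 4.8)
The plan is to derive the claim from Corollary \ref{cor:local_flatness}: I would lift a given compact $\Gamma$-invariant set $K \subset M$ to a $G$-invariant set inside the suspension, build on it a $\phi_X^t$-ergodic measure with uniform vertical Lyapunov spectrum, and then read off from Corollary \ref{cor:local_flatness} a point of $K$ with a conformally flat neighborhood. First I would set $K^{\alpha} := (G \times K)/\Gamma \subset M^{\alpha}$. Since $K$ is closed and $\Gamma$-invariant, this is a well-defined, closed — hence compact, as $M^{\alpha}$ is — and $G$-invariant subset of $M^{\alpha}$; moreover, using the $\Gamma$-invariance of $K$ once more, a point $[(g,x)]$ lies in $K^{\alpha}$ exactly when $x \in K$. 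The goal then becomes to find a point of $\Supp \mu' \cap K^{\alpha}$ whose $M$-coordinate carries a conformally flat neighborhood, for a suitable measure $\mu'$.

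Next I would manufacture that measure by the same mechanism as in Section \ref{ss:bound_rank}. Taking a minimal parabolic subgroup $B \supset A$, which is amenable, I obtain a $B$-invariant probability measure $\nu$ on the compact $B$-invariant set $K^{\alpha}$; its projection $\pi_* \nu$ is $B$-invariant on $G/\Gamma$, hence proportional to Haar. Passing to an $A$-ergodic component $\mu$ of $\nu$ keeps the support inside $K^{\alpha}$ and, by ergodicity of the $A$-action on $(G/\Gamma, \mathrm{Haar})$, keeps $\pi_* \mu$ proportional to Haar. Because $\Rk_{\R} \g = p+1 > \Rk_{\R} \so(p,q)$, the algebra $\g$ does not embed into $\so(p,q)$, so Lemma \ref{lem:unifom_element} applies to $\mu$ and supplies the distinguished direction $X \in \a$ along which all vertical Lyapunov functionals equal $-1$. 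On the Oseledec full-measure set $\Lambda$ the vertical spectrum of $\phi_X^t$ is then uniformly $\{-1\}$ at every point, so choosing a $\phi_X^t$-ergodic component $\mu'$ of $\mu$ with $\mu'(\Lambda)=1$ yields a $\phi_X^t$-invariant, $\phi_X^t$-ergodic measure, supported in $K^{\alpha}$, with a single non-zero vertical exponent.

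With $\mu'$ in hand, Corollary \ref{cor:local_flatness} produces $x \in M$ and $g \in G$ with $[(g,x)] \in \Supp \mu' \subset K^{\alpha}$ such that $x$ admits a conformally flat open neighborhood $U$; since $[(g,x)] \in K^{\alpha}$ forces $x \in K$, the set $U$ meets $K$, which is the claim. To finish the theorem I would then apply the claim to the set of points of $M$ having no conformally flat neighborhood: this set is closed (its complement is open by definition) and $\Gamma$-invariant (as $\Gamma$ acts by conformal maps, which preserve conformal flatness), hence compact and $\Gamma$-invariant; were it nonempty it would meet a conformally flat open set by the claim, an immediate contradiction, so it is empty and $(M,\bar g)$ is conformally flat.

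The main obstacle I anticipate is not the geometric input — Corollary \ref{cor:local_flatness} already does that work — but the bookkeeping that the measure retains every required property through the two reductions: that an $A$-ergodic component of the $B$-invariant measure on $K^{\alpha}$ still projects to Haar, which is needed to invoke Lemma \ref{lem:unifom_element}, and that a $\phi_X^t$-ergodic component still has the uniform vertical spectrum $\{-1\}$, which is needed to invoke Corollary \ref{cor:local_flatness}. The second point is the delicate one, and I would handle it by noting that the vertical Lyapunov exponents of $\phi_X^t$ are constant equal to $-1$ pointwise on the $A$-full-measure set $\Lambda$, so that any ergodic component charging $\Lambda$ automatically inherits the uniform spectrum; selecting $\mu'$ among the components with $\mu'(\Lambda)=1$ makes this rigorous.
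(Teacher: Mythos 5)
Your proof is correct and follows essentially the same route as the paper: form $K^{\alpha}=(G\times K)/\Gamma$, use amenability of a Borel (minimal parabolic) subgroup to produce an $A$-invariant, $A$-ergodic measure on $K^{\alpha}$ projecting to Haar, invoke Lemma \ref{lem:unifom_element} for the uniform direction $X$, pass to a $\phi_X^t$-ergodic component charging the Oseledec set, and conclude via Corollary \ref{cor:local_flatness}. The closing paragraph on deducing the theorem is a harmless variant of the paper's argument and does not affect the claim itself.
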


\begin{proof}
Let $K$ be a compact $\Gamma$-invariant subset of $M$. Then, $G$ preserves the compact subset $K^{\alpha} := (G \times K)/\Gamma \subset M^{\alpha}$. Fix $A<G$ a Cartan subspace and let $B$ a Borel sugroup of $G$ containing $A$. By $B$-invariance of $K^{\alpha}$ and amenability of $B$, the same argument as in Section \ref{ss:bound_rank} gives the existence of a finite $A$-invariant, $A$-ergodic measure $\mu$ supported in $K^{\alpha}$, and whose projection on $G/\Gamma$ is the Haar measure.

By Lemma \ref{lem:unifom_element}, there exists $X \in \a$ whose vertical Lyapunov spectrum is reduced to $\{-1\}$. If $\Lambda \subset K^{\alpha}$ denotes the set of full measure $\mu$ where the conclusions of the higher-rank Oseledec's Theorem are valid, we choose $\mu'$ a probability measure $\phi_X^t$-invariant, $\phi_X^t$-ergodic such that $\mu'(\Lambda)=1$, so that the vertical Lyapunov spectrum of $X$ with respect to $\mu'$ is the same. By Corollary \ref{cor:local_flatness}, there exists $x \in K$ admitting a conformally flat neighborhood, because $\Supp \mu' \subset K^{\alpha}$ and $[(g,x)] \in K^{\alpha}$ implies $x \in K$.
\end{proof}

Let now $x \in M$, and consider the compact $\Gamma$-invariant subset $K = \bar{\Gamma.x}$. If $U$ is a conformally flat open set which meets $K$, then there is $\gamma \in \Gamma$ such that $\gamma.x \in U$, proving that $\gamma^{-1}U$ is a conformally flat neighborhood of $x$. Finally, any point of $M$ admits a conformally flat neighborhood, and Theorem \ref{thm:main} is established.

\section{Limit cases for exceptional root-systems}
\label{s:limit_cases}

In this section, we complete the proof of Theorem \ref{thm:exceptional}. We consider $\Gamma$ a uniform lattice in a simple Lie group $G$ whose restricted root-system $\Sigma$ is exceptional. We consider an unbounded conformal action $\alpha : \Gamma \rightarrow \Conf(M,\bar{g})$, where $(M,\bar{g})$ is a closed pseudo-Riemannian manifold of signature $(p,q)$, with $p \leq q$ and $p+q \geq 3$. We make the same hypothesis as in Section \ref{ss:optimal_index}: the index $p = \min(p,q)$ is assumed to be optimal for $\Gamma$, implying that $\g$ does not embed in $\so(p,q)$.

\subsection{General facts}

Let $\g = \k \oplus \p$ be a Cartan decomposition, $\a \subset \p$ a Cartan subspace, and $\m \subset \k$ the compact part of the centralizer of $\a$. Let $M_0<G$ the connected Lie group associated to $\m$. Until the end of this section, we fix $\mu$ an $AM_0$-invariant, $AM_0$-ergodic measure on $M^{\alpha}$ which projects to the Haar measure of $G/\Gamma$ and we note $H \subsetneq G$ the stabilizer of $\mu$, which is proper by Proposition \ref{prop:no_finite_invariant_measure}.

Since $\h$ contains $\a \oplus \m$, by Lemma 2.4 of \cite{BFH}, $\h$ is saturated by restricted root-spaces, \textit{i.e.} it is of the form $\h = \a \oplus \m \oplus \bigoplus_{\lambda \in S} \g_{\lambda}$ where $S \subset \Sigma$ is a subset. By Proposition 2.6 of \cite{BFH}, if $|\Sigma \setminus S| \leq r(\g)$, then $\h$ is parabolic. In particular, the inequality cannot be strict, because if not we would have $\h = \g$ by definition of $r(\g)$. Thus, $r(\g) \leq |\Sigma \setminus S|$.

As explained in Section 5.5 of \cite{BFH}, by $AM_0$-ergodicity and because $M_0$ centralizes $A$, there is a family $(\chi_1,\ldots,\chi_r)$ in $\a^*$ that coincides with the Lyapunov functionals of almost all $A$-ergodic component of $\mu$. We note that any $A$-ergodic component $\mu'$ of $\mu$ projects to the Haar measure of $G/\Gamma$ by $A$-ergodicity of the Haar measure.

Let $\Sigma_{\mu} \subset \Sigma$ be the set of restricted-roots that are not positively collinear to $\chi_i$ for $i=1,\ldots,r$. Then, $\Sigma_{\mu} \subset S$ because for almost all $A$-ergodic component $\mu'$ and for all $\lambda \in \Sigma_{\mu}$, $\lambda$ is not $\mu'$-resonnant, implying that $\mu'$ is $G_{\lambda}$-invariant and consequently $\mu$ is also $G_{\lambda}$-invariant by Proposition \ref{prop:resonance} (citation from \cite{BRHW}).

We observed in Section \ref{ss:optimal_index} that we always have $r(\g) \leq 2p+1$. We assume now that we are in the limit case $r(\g) = 2p+1$ - which does not occur for $\Sigma = E_6$ since $r(\mathfrak{e}_6)=16$. So, we implicitly assume $\Sigma \neq E_6$. By Proposition \ref{prop:lyapunov_exponents}, we get that $r \leq r(\g)$ and then $r(\g) \leq |\Sigma \setminus S| \leq |\Sigma \setminus \Sigma_{\mu}| \leq r \leq r(\g)$, showing that all these inequalities are equalities. It is worth noting that the inequality $|\Sigma \setminus \Sigma_{\mu}| \leq r$ follows from the fact that exceptional root-systems are reduced, so that two distinct restricted-roots not in $\Sigma_{\mu}$ are respectively positively collinear to two necessarily distinct Lyapunov functionals.

In particular, by Proposition 2.6 of \cite{BFH}, $\h$ is a parabolic subalgebra whose resonant codimension is minimal, equal to $r(\g)$. It implies that there exists a simple system of restricted roots $\Pi = \{\alpha_1,\ldots,\alpha_{\ell}\}$ and $j_0 \in \{1,\ldots,\ell\}$ such that
\begin{equation*}
S = (\{\sum_{j \neq j_0} n_j \alpha_j, \ n_j \in \Z_{\leq 0} \} \cap \Sigma) \cup \Sigma^+.
\end{equation*}
Moreover, $r = r(\g)$ and for all $1 \leq i \leq r$, there exists $\lambda_i \in \Sigma \setminus S$ such that $\lambda_i \in \R_{>0} \chi_i$, and the map $\{i \mapsto \lambda_i\}$ is injective. We note that $\lambda_i$ is uniquely determined because $\Sigma$ is reduced. We note $\ell_i^+ = \R_{>0}.\lambda_i$ the half-line generated by $\lambda_i$. 

We also note that $r=r(\g)=2p+1 \leq p+q = \dim M$ implies $p<q$.

Because $(\chi_1,\ldots,\chi_r)$ are the Lyapunov functionals of almost all $A$-ergodic components of $\mu$, up to reordering them, they satisfy the linear relations $\chi_1 + \chi_{2p+1} = \chi_2 + \chi_{2p} = \cdots = 2\chi_{p+1}$. This implies that for all $i \leq p$, $\ell_{p+1}^+$ is contained in the convex hull of $\ell_i^+$ and $\ell_{2p+2-i}^+$. Since $\{\lambda_1,\ldots,\lambda_{2p+1}\}$ is exactly the complement of $S$, we will see that in all cases, these conditions leave no choice for $\chi_{p+1}$ and that either no configuration exists, or only few possibilities can occur.

\subsection{Case $\Sigma = G_2$}
\label{ss:g2}

We have already seen that $k_{\Gamma} \geq 2$ when $\Sigma = \g_2$. As claimed in Remark \ref{rem:g2}, this inequality is sharp. Indeed, if $\g = \g_2^{(2)}$ is the real-split form of $\g_2$ and if $\rho : \g \rightarrow \gl(V)$ denotes the $7$-dimensional representation of $\g$, then $\rho(\g)$ is skew-symmetric with respect to a quadratic form of signature $(3,4)$. Consequently, $G_2^{(2)}$ acts locally faithfully and conformally on $\Ein^{2,3}$, implying that $k_{\Gamma} = 2$ for all cocompact lattice $\Gamma$ of $G_2^{(2)}$.

\subsection{Case $\Sigma = F_4$}
\label{ss:f4}

Let $\g$ be a Lie algebra whose restricted root-system is $F_4$. We assume here that $p=7$, ensuring that we are in the limit case $r(\g) = 2p+1=15$. We pick an orthonormal basis $(e_1,\ldots,e_4)$ of $\a^*$ such that $\Sigma$ is given by
\begin{itemize}
\item $\{\frac{1}{2}(\pm e_1 \pm e_2 \pm e_3 \pm e_4)\}$
\item $\{\pm e_i \pm e_j\}$, $1 \leq i,j \leq 4$
\item $\{\pm e_i\}$, $1 \leq i \leq 4$,
\end{itemize}
and such that $\alpha_1 = e_2-e_3$, $\alpha_2 = e_3-e_4$, $\alpha_3=e_4$, $\alpha_4 = \frac{1}{2}(e_1 -e_2-e_3-e_4)$. The fact that $\h$ has codimension $15$ implies that $j_0 = 1$ or $j_0=4$ (see Table 1 of \cite{BFH}).

\begin{enumerate}
\item \textbf{Case $j_0=4$}. In this situation, $\Sigma \setminus S$ is formed of the following $15$ restricted roots:
\begin{itemize}
\item $-e_1 \pm e_i$, $i \in \{2,3,4\}$
\item $-e_1$
\item $\frac{1}{2}(-e_1 \pm e_2 \pm e_3 \pm e_4)$.
\end{itemize}
Thus, the half-lines $\ell_1^+,\ldots,\ell_{15}^+$ are those generated by these vectors. The only vector $u$ in this list such that for all $v \neq u$ in this list, there is $w \notin \{u,v\}$ in this list such that $u$ can be written has a convex combination of $v$ and $w$ is $u=-e_1$. Indeed, for any other choice of $u$, the vector $v = -e_1$ would not have an associated vector $w$ such that $u \in \Span(v,w)$. This is because adding a multiple of $v$ to $w$ would not affect the components on $e_2,e_3,e_4$, and two different vectors in this list have distinct components on $e_2,e_3,e_4$.

 Consequently, there is $t > 0$ such that $\chi_8 = -t e_1$. It follows that up to an homothety and a permutation of $\{1,\ldots,15\}$ preserving the pairing $i \leftrightarrow 16-i$, we have 
\begin{itemize}
\item $\chi_8=-e_1$
\item For $i \in {1,2,3}$, $\chi_i=-e_1+e_{i+1}$ and $\chi_{16-i} = -e_1-e_{i+1}$
\item $\chi_4 = -e_1 + e_2+e_3+e_4$, $\chi_{12} = -e_1 - e_2-e_3-e_4$
\item $\chi_5 = -e_1 + e_2+e_3-e_4$, $\chi_{11} = -e_1 - e_2-e_3+e_4$
\item $\chi_6 = -e_1 + e_2-e_3+e_4$, $\chi_{10} = -e_1 + e_2-e_3-e_4$
\item $\chi_7 = -e_1 + e_2-e_3-e_4$, $\chi_{9} = -e_1 - e_2+e_3+e_4$
\end{itemize} 
Indeed, for instance, if $i$ is such that $\chi_i = s (-e_1+e_2)$, then the only choice for $\chi_{16-i}$ is to be of the form $\chi_{16-i} =s'(-e_1-e_2)$ and the condition $2\chi_8 = \chi_i + \chi_{16-i}$ implies $s=s'=t$. It works similarly for the other Lyapunov functionals.

\item \textbf{Case $j_0=1$}.  In this situation, $\Sigma \setminus S$ is formed of the following $15$ restricted roots:
\begin{itemize}
\item $-e_1$, $-e_2$
\item $-e_1-e_2$, $-e_1\pm e_3$, $-e_1 \pm e_4$, $-e_2 \pm e_3$, $-e_2 \pm e_4$
\item $\frac{1}{2}(-e_1-e_2 \pm e_3 \pm e_4)$.
\end{itemize}
These vectors generate the half-lines $\ell_1^+,\ldots,\ell_{15}^+$. Similarly to the previous case, we see that in this list, the only vector $u$ such that for any $v \neq u$, there exists $w \notin \{u,v\}$ such that $u$ is a convex combination of $v$ and $w$ is $u=-e_1-e_2$. We deduce similarly that up to an homothety and a suitable permutation, the Lyapunov functional are given by
\begin{itemize}
\item $\chi_8 = \frac{1}{2}( -e_1-e_2)$
\item $\chi_1 = -e_1$, $\chi_{15} = -e_2$
\item $\chi_2 = -e_1+e_3$, $\chi_{14} = -e_2-e_3$
\item $\chi_3 = -e_1-e_3$, $\chi_{13} = -e_2+e_3$
\item $\chi_4 = -e_1+e_4$, $\chi_{12} = -e_2-e_4$
\item $\chi_5 = -e_1-e_4$, $\chi_{11} = -e_2+e_4$
\item $\chi_6 = \frac{1}{2}(-e_1-e_2+e_3+e_4)$, $\chi_{10}=\frac{1}{2}(-e_1-e_2-e_3-e_4)$
\item $\chi_7 = \frac{1}{2}(-e_1-e_2+e_3-e_4)$, $\chi_9=\frac{1}{2}(-e_1-e_2-e_3+e_4)$
\end{itemize}
\end{enumerate}

We see that in both cases, there is a vector $v$ whose scalar product with all the $\chi_i's$ is constant. In the case $j_0=4$, the vector is $e_1$, and in the case $j_0=1$, the vector is $e_1+e_2$.

\textbf{Conclusion for $F_4$.} This proves that for any $AM_0$-invariant, $AM_0$-ergodic finite measure $\mu$ projecting to the Haar measure, and for almost every $A$-ergodic component $\mu'$ of $\mu$, with Lyapunov functionals $\chi_1,\ldots,\chi_{15}$, there is an element $X \in \a$ such that $\chi_1(X)=\cdots=\chi_{15}(X)=-1$. By Corollary \ref{cor:local_flatness} and Section \ref{ss:conclusion}, we get that $(M,\bar{g})$ is conformally flat.

\subsection{Case $\Sigma = E_8$} 
\label{ss:e8}

We assume here that the restricted root system of $\g$ is $E_8$. We assume $p=28$, ensuring that we are in the limit case $r(\g) = 2p+1 = 57$. We pick an orthonormal basis $(e_1,\ldots,e_8)$ of $\a^*$ such that $\Sigma$ is formed of the vectors
\begin{itemize}
\item $\pm e_i \pm e_j$, $1 \leq i < j \leq 8$
\item $\frac{1}{2}\sum_i (-1)^{n_i} e_i$, with $n_i \in \{0,1\}$, $\sum_i n_i$ even,
\end{itemize}
and such that $\alpha_8= \frac{1}{2}(e_8-e_7-\cdots-e_2+e_1)$, $\alpha_7=e_2+e_1$, $\alpha_6=e_2-e_1$,..., $\alpha_1=e_7-e_6$. Here, we necessarily have $j_0=1$ (see \cite{BFH}, Appendix A), and $\Sigma \setminus S$ is formed of the following $57$ restricted roots
\begin{itemize}
\item $\frac{1}{2}(-e_8-e_7 + \sum_{1\leq i\leq 6}(-1)^{n_i} e_i)$, $n_i \in \{0,1\}$, $\sum n_i$ even.
\item $-e_8-e_7$
\item $-e_8 \pm e_i$, $1 \leq i \leq 6$
\item $-e_7 \pm e_i$, $1 \leq i \leq 6$.
\end{itemize}

Similarly as before, we obtain that the only possibility for $\chi_{29}$ is to be a positive multiple of $-e_8-e_7$. Up to an homothety, we may assume $\chi_{29} = -\frac{1}{2}(e_8+e_7)$. Then, up to a suitable permutation, we get
\begin{itemize}
\item for $1\leq i \leq 6$, $\chi_i = -e_8 + e_i$, $\chi_{58-i} = -e_7 -e_i$, $\chi_{6+i} = -e_8 -e_i$, $\chi_{52-i} = -e_7 + e_i$.
\item $\{\chi_{13},\ldots,\chi_{45}\} \setminus \{\chi_{29}\} = \{\frac{1}{2}(-e_8-e_7 + \sum_{1\leq i\leq 6}(-1)^{n_i} e_i), \ n_i \in \{0,1\} \ \sum n_i \text{ even}\}$.
\end{itemize}
In the set $\{\chi_{13},\ldots,\chi_{45}\} \setminus \{\chi_{29}\}$, the association $\chi_i \leftrightarrow \chi_{58-i}$ is the obvious one, \textit{i.e.} we associate $\frac{1}{2}(-e_8-e_7 + \sum_{1\leq i\leq 6}(-1)^{n_i} e_i)$ with $\frac{1}{2}(-e_8-e_7 + \sum_{1\leq i\leq 6}(-1)^{n_i+1} e_i)$.

Thus, we see that the vector $e_8+e_7$ has a constant scalar product with all the $\chi_i$'s.

\textbf{Conclusion for $E_8$.} Similarly to the case of $F_4$, $(M,\bar{g})$ must be conformally flat.

\subsection{Case $\Sigma = E_7$}
\label{ss:e7}

In this situation, $r(\g)=27$ and we assume $p=13$. We embed $\a^*$ in $\R^8$ and pick an orthonormal basis $(e_1,\ldots,e_8)$ such that $\a^*$ is the orthogonal of $e_8+e_7$, and $\Sigma$ is formed of the following vectors
\begin{itemize}
\item $\pm(e_8-e_7)$
\item $\pm e_i \pm e_j$, $1 \leq i < j \leq 6$
\item $\pm \frac{1}{2}(e_8-e_7+\sum_{1 \leq i \leq 6} (-1)^{n_i} e_i)$, $n_i \in \{0,1\}$, $\sum n_i$ odd.
\end{itemize}
We may assume that $\alpha_7= \frac{1}{2}(e_8-e_7-\cdots-e_2+e_1)$, $\alpha_6=e_2+e_1$, $\alpha_5=e_2-e_1$,..., $\alpha_1=e_6-e_5$. Necessarily, $j_0 = 1$ and we obtain the following list for the vectors of $\Sigma \setminus S$ : 
\begin{itemize}
\item $\frac{1}{2}(-e_8+e_7-e_6+\sum_{1 \leq i \leq 5} (-1)^{n_i} e_i)$, $n_i \in \{0,1\}$, $\sum n_i$ even,
\item $-e_8+e_7$,
\item $-e_6\pm e_i$, $1 \leq i \leq 5$.
\end{itemize}
We claim that in this list, there is no vector $u$ such that for all $v \neq u$, there exists $w \notin \{u,v\}$ such that $u$ is a linear combination of $v$ and $w$. Indeed:
\begin{enumerate}
\item If $u = \frac{1}{2}(-e_8+e_7-e_6+\sum_{1 \leq i \leq 5} (-1)^{n_i} e_i)$, and if $v = -e_8+e_7$ then a vector $w$ such that $u \in \Span(v,w)$ must have non-zero components on $e_1,\ldots,e_6$. So $w$ must be of the form $\frac{1}{2}(-e_8+e_7-e_6+\sum_{1 \leq i \leq 5} (-1)^{n_i'} e_i)$. Since $v$ as no component on $e_1,\ldots,e_5$, we get $w=u$, which is absurd.
\item If $u=-e_8+e_7$, and if $v=-e_6+e_5$, then $w$ must have components on $e_8,e_7,e_6,e_5$, so it must be of the form $\frac{1}{2}(-e_8+e_7-e_6+\sum_{1 \leq i \leq 5} (-1)^{n_i} e_i)$, which is absurd.
\item If $u=-e_6\pm e_i$, and if $v = -e_8+e_7$, then, as above, $w$ must be of the form $\frac{1}{2}(-e_8+e_7-e_6+\sum_{1 \leq i \leq 5} (-1)^{n_i} e_i)$, which is again absurd.
\end{enumerate}

\textbf{Conclusion}: This contradicts the relations satisfied by the $\chi_i$'s and proves that when $\Sigma = E_7$ and $p=13$, there is no unbounded conformal action of $\Gamma$ on  $(M,\bar{g})$. We get as announced $k_{\Gamma} \geq 14$.

\bibliographystyle{alpha}
\bibliography{bibli}
\nocite{*}

\end{document}